\documentclass{article}

\usepackage{geometry}
\usepackage{amsmath}
\usepackage{amssymb}
\usepackage{amsthm}
\usepackage{MnSymbol}
\usepackage[numbers]{natbib}
\usepackage{quiver}
\usepackage[colorlinks=true]{hyperref}
\hypersetup{allcolors=[rgb]{0.1,0.1,0.8}}
\usepackage{parskip}
\usepackage{comment}

\newenvironment{ak}{\begin{color}{magenta}}{\end{color}}

\newcommand{\Sup}{\mathsf{Sup}}

\newcommand{\Rel}{\mathsf{Rel}}
\newcommand{\Spa}{\mathsf{Spa}}
\newcommand{\cont}{{\bf I}}
\newcommand{\bulletop}{\mathbin{\bullet}}
\newcommand{\val}[1]{[\![{#1}]\!]}
\newcommand{\descr}[1]{(\![{#1}]\!)}
\renewcommand{\phi}{\varphi}

\newtheorem{theorem}{Theorem}[section]
\newtheorem{lemma}[theorem]{Lemma}
\theoremstyle{definition}
\newtheorem{proposition}[theorem]{Proposition}
\newtheorem{corollary}[theorem]{Corollary}
\newtheorem{definition}[theorem]{Definition}
\newtheorem{example}[theorem]{Example}
\newtheorem{remark}[theorem]{Remark}

\title{Canonical Extensions of Quantale-Enriched Categories}

\author{Alexander Kurz\thanks{Chapman University, Orange, California, USA} \and 
Apostolos Tzimoulis\thanks{University of Luxembourg, Luxembourg}}

\date{}

\begin{document}

\maketitle

\begin{abstract}
Drawing on well-known results from the theory of canonical extensions and the theory of categories enriched over a quantale, we define canonical extensions of quantale-enriched categories and establish their basic properties.
\end{abstract}

\tableofcontents

\section{Introduction}

Drawing on well-known results from the theory of canonical extensions and the theory of categories enriched over a quantale 
$$(\Omega,\sqsubseteq,\bigsqcup,e,\cdot,)$$
we define canoncial extensions of quantale-enriched categories and establish their basic properties. Elements of $\Omega$ can be understood as distances or weights or truth-values.

Canonical extensions arise as certain MacNeille completions. MacNeille completions, in turn, are given by the fixed points of the MacNeille-Birkhoff-Isbell-Lawvere adjunction 
% https://q.uiver.app/#q=WzAsMixbMCwwLCJcXG1hdGhjYWwgRCBYIl0sWzIsMCwiXFxtYXRoY2FsIFUgQSJdLFswLDEsInBeXFx1cGFycm93IiwwLHsiY3VydmUiOi0yfV0sWzEsMCwicF5cXGRvd25hcnJvdyIsMCx7ImN1cnZlIjotMn1dXQ==
\[
\begin{tikzcd}
	{\mathcal{D}X} && {\mathcal{U}A}
	\arrow["{\phi\ \mapsto\ \phi\blacktriangleright \cont}",curve={height=-16pt}, from=1-1, to=1-3]
	\arrow["{\psi\ \mapsto\ \cont\blacktriangleleft\psi}",curve={height=-16pt}, from=1-3, to=1-1]
	\arrow[phantom, "\bot", from=1-1, to=1-3]
\end{tikzcd}
\]
between ``weighted downsets'' $\phi$ on $X$ and ``weighted upsets'' $\psi$ on $A$ induced by a binary relation
$$\cont :X\looparrowright A$$
which is an $\Omega$-valued relation 
$$\cont: X \times A\to \Omega$$
on the product of the enriched categories $X$ and $A$. The lattice theoretic results on MacNeille completions and canonical extensions are recovered in the case where the quantale $\Omega$ is the familiar set $2=\{0<1\}$ of truth-values.

We develop our work in a language that stays as faithful as possible to \emph{both} lattice theory (as in the work of Dunn, Gehrke and Palmigiano \cite{Dunn-etal:canonical-extensions} on the canonical extensions of posets) and to enriched category theory (as in the work of Stubbe~\cite{Stubbe05a} on quantaloid enriched categories). 
This does require some compromise. 
For example, since we do not assume that the quantale $\Omega$ is commutative, we need two implications (residuals), written as $\rhd$ and $\lhd$. 
Because of the associativity of the quantale multiplication $(a \cdot b)\cdot c = a\cdot (b\cdot c)$ the residuals satisfy
$$ (a\rhd b)\lhd c = a \rhd (b\lhd c)$$
We systematically extend quantale notation to a  ``distributor calculus''~\cite{Stubbe05a}, where this equation becomes 
$$ (\phi \blacktriangleright \cont)\blacktriangleleft \psi = \phi \blacktriangleright (\cont\blacktriangleleft \psi).$$
and is nothing but the MacNeille-Birkhoff-Isbell-Lawvere adjunction 
$$-\blacktriangleright \cont \ \dashv \ 
 \cont\blacktriangleleft -. $$
This ``blacktriangle calculus'' provides a useful bridge between quantale-enriched category theory on the one hand and lattice theory, algebraic logic and proof theory on the other hand. 

The reason for paying attention to the language in which we formulate our results is not only to make our work accessible to category theorists and to lattice theorists: In our own ongoing work on applications to logic, discussed below, we need to have easy access to general category theoretic results and to an algebraic language in the style of lattice theory and logic. 
%As we will discuss in the conclusion, 
This approach also sheds new light on known results in lattice theory. 

In the remainder of this section, we discuss applications and related work.

\subsection{Applications to Logic}%Categorization, Decision Theory and AI ... 
The present article is  situated in a line of research which develops and studies logical formalisms that describe and reason about categorization systems (with applications to decision theory and AI) arising across disciplines. The main methodological tool of this endeavor is based on Wille's formal concept analysis (FCA) \cite{Ganter-Wille}. In  \cite{CFPPTW16} the epistemic logic of categories and concepts was introduced, based on a general framework of non-distributive logics developed in \cite{CoPa:non-dist}. In \cite{belohlavek,Pavlovic12}, formal concept analysis was lifted to the fuzzy context, where relations that take value over commutative unital quantales and their corresponding fuzzy Galois connection were studied. Recently, combining these approaches, correspondence theory for  fuzzy non-distributive modal logics is developed in \cite{CONRADIE2024108892}; in \cite{BOERSMA2024114196} such logical frameworks have been used for developing unsupervised learning algorithms for outlier detection that also provide explanations of their results. In this context, the present article is a stepping stone for generalizing algorithmic correspondence \cite{CONRADIE2010319,CoPa:non-dist} results from the algebraic to the quantale-enriched setting. 

\subsection{Related Work} 

While the specific motivation of our work are the applications to logic discussed above, we build on insights and results from various areas of mathematics which we sketch out now.

\paragraph{Lattice Theory} In Birkhoff's monograph \emph{Lattice Theory} \cite{Birkhoff48}, under the heading \emph{Polarity}, one finds the definition of what we call the MacNeille completion of a binary relation 
$\cont: X\looparrowright A$
in the special case $\Omega=2$, that is, where the quantale $\Omega$ is the familiar set $2=\{0<1\}$ of truth-values. For a modern introduction to lattice theory see Davey and Priestley~\cite{Davey-Priestley}.

\paragraph{Formal Context Analysis} As already shown by Birkhoff, relations $\cont: X\looparrowright A$ can be considered (via their MacNeille completions) as representations of complete lattices. The Formal Context Analysis (FCA) of Ganter and Wille \cite{Ganter-Wille} takes this as a starting point for a theory of data-analysis in which the `formal context' or `context' or `incidence relation', $\cont$ relates `objects' in $X$ with `attributes' in $A$.  While we will be interested in the \href{https://upriss.github.io/fca/fca.html}{applications of FCA} in future work, most relevant for our work here are theoretical investigations into generalizations of FCA to many-valued (or fuzzy or quantitative) context analysis as in the work of Belohlavek~\cite{belohlavek} and Pavlovic~\cite{Pavlovic12}.

\paragraph{Canonical Extensions} Canonical extensions of Boolean algebras were introduced by Jonsson and Tarski \cite{Jonsson-Tarski:1,Jonsson-Tarski:2} in order to study relation algebras and, more generally, modal algebras (Boolean algebras with operators). In particular, they pioneered the use of canonical extensions to prove completeness of modal logics via representation theorems generalizing Stone duality for Boolean algebras. This line of research was extended by Gehrke and Jonsson \cite{Gehrke-Jonsson} to distributive lattices and by Dunn, Gehrke and Palmigiano~\cite{Dunn-etal:canonical-extensions} to lattices and posets, that is, in our terms, to the case $\Omega=2$.

\paragraph{Substructural Logics, Algebraic Logic} On the one hand, quantales can be seen as particular residuated lattices, which constitute the algebraic semantics of \emph{substructural logics}, see Galatos, Jipsen, Kowalski and Ono \cite{residuated-lattices}. Here, substructural refers to the omission of (some of) the rules of exchange, weakening and contraction in the corresponding sequent calcului. On the other hand, MacNeille completions of relations allow us to give syntactic representations of \emph{non-distributive} logics. Our work will allow us to generalize duality-based results in modal logic to non-distributive substructural logics. 

\paragraph{Category Theory - Lawvere Metric Spaces} Like so many others, we are inspired by Lawvere's seminal \emph{Metric spaces, generalized logic and closed categories}~\cite{Lawvere:metric}. Lawvere's article suggests that it should be possible to produce a uniform account of a considerable body of results on many-valued logics parametrically in the quantale $\Omega$ of truth-values and here we continue previous work in this line of research \cite{BKPV:cover-modality,Babus-Kurz,BalanKV,Balan-Kurz:distributivity}. Our running example of automata as categories enriched over a (non-commutative) quantale in Section~\ref{sec:preliminaries}, due to Betti and collaborators \cite{Betti:automi,Betti-Kasangian,Betti-Walters}, was also inspired by \cite{Lawvere:metric}. Another indirect influence of \cite{Lawvere:metric} on our work is via Willerton's \cite{Willerton:tight-spans} which gives a detailed analysis of the MacNeille completion of Lawvere metric spaces. Hofmann and Stubbe~\cite{Hofmann-Stubbe} consider the generalisation to partial metrics. The MacNeille completion of quantale-enriched categories has been studied in Shen and Zhang~\cite{Shen-Zhang}, Garner \cite{Garner:topological}, and Fujii \cite{Fujii}. The latter extends the characterization by Banaschewski and Bruns~\cite{Banaschewski:macneille} of the MacNeille completion as the injective hull from the case of $\Omega=2$ to all quantales.

\paragraph{Category Theory - Isbell Conjugacy} Isbell  \cite{Isbell:adequate-subcategories} generalized the MacNeille-Birkhoff adjunction from orders to categories and Lawvere \cite{Lawvere:isbell} called it the Isbell conjugacy. Isbell also generalized the MacNeille completion to categories (then often called the reflexive completion). Avery and Leinster~\cite{Avery-Leinster} investigate it in  detail. Pavlovic and Hughes \cite{Pavlovic-Hughes} and then Ferrer~\cite{Ferrer:phd} give a generalization of the MacNeille completion to categories based on tight limits. In a different direction, Kurz and Rosicky \cite{Kurz-Rosicky:logics} used the Isbell conjugacy to generalize the Jonsson-Tarski representation theorem for Boolean algebras with operators to modal algebras that dualize coalgebras for set-functors.

\paragraph{Domain Theory} Lawvere metric spaces \cite{Lawvere:metric} are asymetric and can represent both order-theoretic and metric structure. Wagner~\cite{Wagner:PhD,Wagner97} and Rutten and collaborators \cite{Rutten96,Rutten98,Bonsangue98} used this observation to unify order-theoretic and metric domain theory. This work shows that quantale-enriched categories provide a wide range of different models of computations.

\paragraph{Coalgebraic Modal Logic} Coalgebraic modal logic \cite{Moss:coalgebraic-logic,KurzPSV} has been successful in generalizing Abramsky's Domain Theory in Logical Form~\cite{Abramsky:dtlf} from particular type constructors to a rather general theory parameterized by a functor on a suitable category such as sets or posets. While there has been work on further parameterizing coalgebraic logic by a quantale \cite{BKPV:cover-modality,Babus-Kurz,BalanKV,Balan-Kurz:distributivity}, it remains an open question how far this program can be pushed.

%\subsection{Table of Contents} We start with two sections on Preliminaries (Section~\ref{sec:preliminaries}) and on the Mac Neille Completion of a relation (Section~\ref{sec:macneille}). These sections contain known material collected from various sources such as \cite{Stubbe05a,Garner:topological,Shen-Zhang}. The reason to include a detailed account here is to unify and streamline the notation in such a way that it will faciliate the applications to logic outlined above. Particular attention was paid to establishing a notation that allows a smooth transition from laws on the level of quantales to laws on the level of enriched categories. 

\subsection{Acknowledgements} This paper is part of a larger project in collaboration with Giuseppe Greco and Brandon Laing on the algebraic semantics and proof theory of many-valued non-classical substructural logics. We are also grateful to Peter Jipsen, Drew Moshier and Alessandra Palmigiano for helpful discussions.

\section{Preliminaries}\label{sec:preliminaries}

We review quantales, quantale-enriched categories (called quantale spaces here), quantale-valued relations and weighted (co)limits. We discuss in some detail four running examples and are careful to introduce notation that will help with later calculations. For more details, we recommend Stubbe~\cite{Stubbe05a,Stubbe:introduction}.\footnote{We write Stubbe's $g\circ f$ as  $f\cdot g$ reading $f\cdot g$ as ``first $f$, then $g$".}

\subsection{Quantales}

We denote by $\Sup$ the monoidal closed category of complete join semilattice (sup-lattices). A quantale is a monoid in the monoidal category of sup-lattices or also a one-object $\Sup$-enriched category. Explicitly, a quantale 
$$(\Omega,\sqsubseteq, \bigsqcup,e,\cdot,)$$
is a complete join semilattice $(\Omega,\sqsubseteq,\bigsqcup)$ and a monoid $(\Omega,e,\cdot)$ in which multiplication distributes over joins. We write top as $\top$ and bottom as $\bot$. Since $\Omega$ is complete it also has meets $\bigsqcap$.\footnote{The order of operations is defined such that in $\bigsqcap_x(\alpha(x)\rhd\beta(x))$ the parentheses can be omitted.} Since multiplication preserves joins in each argument,  multiplication has a left-residual $\lhd$ and a right-residual $\rhd$ defined as
$$b\,\sqsubseteq\, a\rhd c 
\quad \Leftrightarrow \quad
a \cdot b \,\sqsubseteq\, c 
\quad \Leftrightarrow \quad
a\,\sqsubseteq\, c\lhd b $$
\noindent In a commutative quantale, we have $a\rhd c = c\lhd a$.

\begin{example}\label{exle:quantales} 
\begin{enumerate}
\item The \emph{two-chain} $2=\{0\sqsubseteq 1\}$ is a commutative quantale in which multiplication and meet coincide. The residual is implication.

\item The \emph{Lawvere quantale} $[0,\infty]$ is a subset of the extended real numbers \cite{Lawvere:metric}. It is ordered by $\ge$ with top $\top=0$ and has $+$ as multiplication. The residual is truncated minus $a\lhd b=a\dotminus b$. 

\item The \emph{similarity quantale} $\Omega=\{0, 1,  2,   \ldots \infty\}$ is ordered like the extended natural numbers, has $\min(-,-)$ as multiplication  and $\top=\infty$ as the neutral element. We interpret the elements of $\Omega$ as a measure of similarity. The residual $m\rhd n$ is given by ${\sf if~} m\le n {\sf ~then~ } \infty {\sf ~else~} n$.

\item The \emph{quantale  of languages} $\mathcal P(\Sigma^\ast)$ is given with respect to a set $\Sigma$ (called the alphabet) and has as elements subsets of $\Sigma^\ast$ (the set of finite words over $\Sigma$) \cite{Abramsky-Vickers:quantales}. The order is given by inclusion. Multiplication is defined via $L\cdot L'=\{vw \mid v\in L, w\in L'\}$ where $vw$ denotes the concatenation of the words $v$ and $w$. The residuals are given by $L\rhd M = \{w\in \Sigma^\ast \mid \forall v\in L\,.\, vw\in M\}$ and $M\lhd L = \{w\in \Sigma^\ast\mid \forall v\in L \,.\, wv\in M\}$. We write $\epsilon$ for the empty word and usually abbreviate a singleton set---such as the identity of multiplication $\{\epsilon\}$---by its element. 
\end{enumerate}
To be continued in Example~\ref{exle:quantale-spaces}. 
\end{example}

%\begin{remark}
\subsection{Quantale Laws}\label{sec:quantale-laws}
%We list some useful facts. 
The counits of the residuals are 
\begin{gather*}
r\cdot (r\rhd s) \sqsubseteq s\\
s\sqsupseteq (s\lhd r) \cdot r
\end{gather*}
For all $r,s,t\in\Omega$ we have 
\begin{gather*}
r\rhd(s\lhd t) =  (r\rhd s) \lhd t\\
(r\rhd s)\cdot(s\rhd t)\sqsubseteq r\rhd t\\
(t\lhd s)\cdot(s\lhd r)\sqsubseteq t\lhd r\\
e\sqsubseteq (r\rhd r) \quad\quad e\sqsubseteq (r\lhd r)
\end{gather*}
For all $r\in\Omega$ there is an adjunction between partial orders
$$(-\rhd r)\ \dashv \ (r\lhd -) : \Omega^\partial\to\Omega$$
%\end{remark}
due to $\Omega^\partial(a\rhd r,b)=\Omega(a,r\lhd b)$.

\subsection{Duality}
The notation $$\Omega^\partial$$ refers to the poset obtained from $\Omega$ by reversing the order while $$\Omega^o$$ is obtained from $\Omega$ by reversing the multiplication. Considering $\Omega$ as a one-object order-enriched category, $\Omega^o$ is the category obtained from $\Omega$ by reversing 1-cells.

\subsection{Quantale Spaces}

We call a category enriched over a quantale $\Omega$ a \emph{quantale space}, or, $\Omega$-space. Explicitely, an $\Omega$-space $X$ consists of set $X$ together with a function $X(-,-):X\times X\to\Omega$ satisfying
\begin{align*}
    e &\sqsubseteq X(x,x),\\
    X(x,y)\cdot X(y,z) & \sqsubseteq X(x,z).
\end{align*}
A morphism of quantale spaces is a functor of quantale-enriched categories. Explicitely, a function $f:X\to Y$ is a morphism of quantale spaces, also known as a functor, if 
$$X(x,x')\sqsubseteq Y(fx,fx').$$
\begin{definition}
    Every quantale space $X$ has an underlying order
$$x\le y \stackrel{\rm def}\Longleftrightarrow e\sqsubseteq X(x,y).$$
Given $f,g:X\to Y$ the order on functors is defined by $f\le g$ if $fx\le gx$ for all $x\in X$. A space is called \emph{skeletal} if its underlying order is anti-symmetric.
\end{definition}

\begin{definition}
    $\Spa(\Omega)$ is the order-enriched category of quantale spaces and quantale space morphisms.
\end{definition}

\begin{example} \label{exle:quantale-spaces} We continue from Example~\ref{exle:quantales}.
\begin{enumerate}
\item $\Spa(2)$ is isomorphic to the category of preorders.
\item $\Spa([0,\infty])$ is the category of Lawvere metric spaces, also known as a generalized metric space \cite{Lawvere:metric}. For a Lawvere metric space (LMS) $X$, the `distance' $X(x,y)$ satsifies $X(x,x)=0$ and the triangle inequality $X(x,y)+X(y,z) \ge X(x,z)$. Every metric space is a Lawvere metric space. But an LMS does not need to have symmetric distance:
\begin{enumerate}
    \item $\Omega=[0,\infty]$ is an LMS with $\Omega(x,y)=y\dotminus x$. Note that the order $\sqsubseteq$ on $[0,\infty]$ satisfies $x\sqsubseteq y \Leftrightarrow 0=y\dotminus x$.
    \item \label{exle:real} The real numbers $\mathbb R$ with their natural order are an LMS given by $\mathbb R(x,y)=x\dotminus y$. Note that $x\le y$ in the reals iff $0=x\dotminus y$. 
\end{enumerate}
\item Let $\Omega$ be the similarity quantale. We call an $\Omega$-space a similarity space. The set $\Sigma^\ast$ of finite words over an alphabet $\Sigma$ is a similarity space defined by $\Sigma^\ast(v,w)=\infty$ if $v$ is a prefix of $w$ and otherwise $\Sigma^\ast(v,w)$ is the length of the longest common prefix of $v,w$. The same definition also turns the set of finite and infinite words over $\Sigma$ into a similarity space.

Similarity spaces can be embedded into generalized ultrametric spaces, which have been studied from the point of view of enriched category theory in Rutten~\cite{Rutten96,Rutten98}.

\item A quantale space over $\mathcal P(\Sigma^\ast)$ is a generalized non-deterministic automaton (without designated initial and final states) \cite{Betti:automi,Betti-Kasangian,Rosenthal:automata}.
\end{enumerate}
To be continued in Example~\ref{exle:omega}.
\end{example}

\subsection{Truth-Values vs Distances}
As can be seen from the examples, the elements of the quantale can be interpreted as \emph{truth values} or as \emph{distances}. For example, $[0,\infty]$ with $0$ as top and $\infty$ as bottom is naturally interpreted as a quantale of distances. On the other hand, there is an isomorphism of quantales $[0,\infty]\to [0,1]$ defined by $x\mapsto e^{-x}$ mapping $0$ to $1$ and $\infty$ to $0$. While both quantales give rise to isomorphic quantale enriched categories, the second quantale is more naturally interpreted as a quantale of truth values.

In this example, in the \emph{truth-value interpretation} we have 
$X(a,b)\cdot X(b,c) \le X(a,c)$ while in the \emph{distance interpretation} we have $X(a,b)+X(b,c)\ge X(a,c)$, also known as the triangle inequality of metric spaces. Similarly, $\bigsqcap$ is conjunction in the truth-value interpretation and supremum in the distance interpretation.

In the following, for $\Omega$-spaces $X$, we often find it convenient to speak of $X(x,y)$ as \emph{the distance from $x$ to $y$} without making assumptions on $\Omega$ or implying that we favour the distance interpretation over the truth-value interpretation.

\subsection{Duality} Dualising the order of the multiplication of $\Omega$, $X$ is an $\Omega^o$-space if it satisfies
$$X(y,z)\cdot X(x,y) \sqsubseteq X(x,z).$$
In addition, one can also dualise the order of $\Omega$. This leads to 4 different ways in which $\Omega$ itself can be turned into a quantale space $A$: 
\begin{enumerate}
\item $A(a,b)=a\rhd b$
\item $A(a,b)=a\lhd b$
\item $A(a,b)=b\rhd a$
\item $A(a,b)=b\lhd a$
\end{enumerate} 
In cases 1 and 4 the order defined by $a\le b \ \Leftrightarrow\ e\sqsubseteq A(a,b)$ agrees with the order of $\Omega$, in cases 2 and 3 it is the dual. In cases 1 and 2, $A$ is an $\Omega$-space and, in cases 3 and 4, $A$ is an $\Omega^o$-space. The $A$ of case 1 is the $A^o$ of case 3 and the $A$ of case 2 is the $A^o$ of case 4. If $\Omega$ is commutative, case 1 equals case 4 and case 2 equals case 3.
\begin{remark}
    $X$ is an $\Omega$-space iff $X^o$ is an $\Omega^o$-space.
\end{remark}

\begin{example}\label{exle:omega} We continue from Example~\ref{exle:quantale-spaces}.
\begin{enumerate}
    \item The Lawvere quantale is enriched over itself with  $[0,\infty](r,s)=r{\rhd}s = s{\lhd} r = s\dotminus r$.
    
   \item The quantale of all languages $\Omega=\mathcal P(\Sigma^\ast)$ can be enriched to an automaton in 4 different ways. 
   \begin{description}
       \item{1.} $\mathcal P(\Sigma^\ast)(L,M)=L\rhd M$
       \item{2.} $\mathcal P(\Sigma^\ast)(L,M)=L\lhd M$
       \item{3.} $\mathcal P(\Sigma^\ast)(L,M)=M\rhd L$
       \item{4.} $\mathcal P(\Sigma^\ast)(L,M)=M\lhd L$
   \end{description} We interpret case 1 and 2 (enriched over $\Omega$)  as a forward running automaton and case 3 and 4 (enriched over $\Omega^o$) as a backward running automaton. 
   
   We call case 1 the \emph{history automaton of all languages} because we think of a state $L$ as recording its history, starting from the initial state $\epsilon$. The hom $L\rhd M$ contains the words which extend all words in $L$ to a word in $M$. In particular, $\mathcal P(\Sigma^\ast)(\epsilon,L)=\epsilon \rhd L = L$. 
   
   We call case 2 the \emph{prophecy automaton of all languages} because a state $L$ consists of all words that  lead from $L$ to a final state, where we consider a state to be final if it contains $\epsilon$.  In particular, $\mathcal P(\Sigma^\ast)(L,\epsilon)=L\lhd\epsilon =L$ and if $\epsilon\in M$ then $\mathcal P(\Sigma^\ast)(L,M)\subseteq L$.
\end{enumerate}
To be continued in Example~\ref{exle:weighted-limit}.
\end{example}

\subsection{Weighted Relations}\label{sec:relations}
A \emph{quantale-valued relation} (also known as bimodule, profunctor, distributor, or weakening relation) $R:X\looparrowright Y$ between quantale spaces $X$ and $Y$ is a function $X\times Y \to \Omega$ satisfying 
$$X(x',x)\cdot R(x,y)\sqsubseteq R(x',y) 
\quad\quad\quad R(x,y)\cdot Y(y,y')\sqsubseteq R(x,y').$$
We call such an $R$ also a quantale relation or \emph{weighted relation} or metric relation or $\Omega$-relation or just relation. If we want to name the quantale, we call it an $\Omega$-relation. Weighted relations $R : X\looparrowright Y$ and $S:Y\looparrowright Z$ are composed in diagrammatic order according to 
$$(R\bullet S)(x,z)=\bigsqcup_{y\in Y}R(x,y)\cdot S(y,z).$$ 
Given an $\Omega$-space $X$, we use $X$ also to denote the hom $X(-,-):X\looparrowright X$. Homs play the role of identity relations:
\begin{equation}\label{eq:identity}
X\bullet R = R = R\bullet Y.
\end{equation}
We may write $Rxy$ or $xRy$ for $R(x,y)$.

%\begin{ak} we need to say that $\bullet$ is associative\end{ak}

\begin{lemma}\label{lemma: bullet is associative}
    The operation $\bullet$ is associative, that is, $$R\bullet(S\bullet T)=(R\bullet S)\bullet T.$$
\end{lemma}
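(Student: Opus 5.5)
The plan is to verify the equation pointwise, by unfolding the definition of $\bullet$ on both sides and using only the two defining properties of a quantale: multiplication distributes over arbitrary joins in each argument, and multiplication is associative. Let $R:X\looparrowright Y$, $S:Y\looparrowright Z$ and $T:Z\looparrowright W$, and fix $x\in X$ and $w\in W$.

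For the left-hand side, unfold the inner composite and then the outer one:
$$(R\bullet(S\bullet T))(x,w)=\bigsqcup_{y\in Y}R(x,y)\cdot\Big(\bigsqcup_{z\in Z}S(y,z)\cdot T(z,w)\Big).$$
Because multiplication preserves joins in its second argument, this equals
$$\bigsqcup_{y\in Y}\bigsqcup_{z\in Z}R(x,y)\cdot\big(S(y,z)\cdot T(z,w)\big).$$
For the right-hand side, the symmetric computation uses preservation of joins in the first argument:
$$((R\bullet S)\bullet T)(x,w)=\bigsqcup_{z\in Z}\bigsqcup_{y\in Y}\big(R(x,y)\cdot S(y,z)\big)\cdot T(z,w).$$

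The two expressions are now joins over the same index set $Y\times Z$. Joins in a complete lattice can be reindexed and iterated in either order, so both can be read as a single join over all pairs $(y,z)$. Associativity of the monoid $(\Omega,e,\cdot)$ then identifies the terms pair by pair, and the two sides are equal for every $x$ and $w$.

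There is essentially no obstacle here. The one point to state explicitly is that distributivity holds for \emph{arbitrary}, possibly infinite, joins, which is exactly the sup-lattice condition built into the definition of a quantale. It is also worth remarking that $R\bullet(S\bullet T)$ is again a weighted relation, since the composite of weighted relations satisfies the two compatibility inequalities by the same distributivity. The associativity claim itself, however, concerns only equality of the underlying functions $X\times W\to\Omega$.
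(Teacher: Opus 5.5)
Your proof is correct and is essentially the paper's argument. You unfold both composites pointwise, distribute multiplication over the arbitrary joins in the appropriate argument, apply associativity of $\cdot$, and interchange the two joins over $Y$ and $Z$; the paper writes this same sequence of steps as a single chain of equalities from $R\bullet(S\bullet T)$ to $(R\bullet S)\bullet T$.
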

\begin{proof} This follows from the associativity of the quantale operation and that in a quantale the equations $(\bigsqcup_{i\in I}r_i)\cdot s=\bigsqcup_{i\in I}(r_i\cdot s)$ and $s\cdot(\bigsqcup_{i\in I}r_i)=\bigsqcup_{i\in I}(s\cdot r_i)$ hold:
    \begin{align*}
R\bullet (S\bullet T)(w,z) 
&= \bigsqcup_xR(w,x)\cdot(\bigsqcup_y S(x,y) \cdot T(y,z))\\
&= \bigsqcup_x\bigsqcup_y R(w,x)\cdot(S(x,y)\cdot T(y,z))\\
&= \bigsqcup_x\bigsqcup_y (R(w,x)\cdot S(x,y))\cdot T(y,z)\\
&= \bigsqcup_y\bigsqcup_x (R(w,x)\cdot S(x,y))\cdot T(y,z)\\
&= \bigsqcup_y (\bigsqcup_x R(w,x)\cdot S(x,y))\cdot T(y,z)\\
&= (R\bullet S)\bullet T (w,z).
\end{align*}
\end{proof}

\begin{definition}
$\Omega$-relations form an order-enriched category $\Rel(\Omega)$, or $\Rel$ for short, with composition and identities given as above and the order in $\Rel(X,Y)$ defined by 
$$R\sqsubseteq S \ \ \Longleftrightarrow \ \ \forall x\in X.\forall y\in Y. xRy \sqsubseteq xSy.$$
\end{definition} 

\begin{remark}\label{rmk:blacktriangle}
    $\Rel$ is residuated with, given $R : X\looparrowright Y$, $S:Y\looparrowright Z$, $T:X\looparrowright Z$,
    $$R\bulletop - \ \dashv\  R\blacktriangleright - \quad\quad\quad 
    - \bulletop S\ \dashv \  -\blacktriangleleft S$$
    that is
    $$
    S\sqsubseteq R\blacktriangleright T \ \Leftrightarrow \ 
    R\bullet S\sqsubseteq T \ \Leftrightarrow \ 
    R\sqsubseteq T\blacktriangleleft S.
    $$
    Given this adjunction and Lemma \ref{lemma: bullet is associative} we can lift all the equations and inequations from Section \ref{sec:quantale-laws}. In particular:
    \begin{gather*}
    R\bulletop (R\blacktriangleright T) \,\sqsubseteq \, T
    \quad\quad\quad
    T\sqsubseteq R\blacktriangleright (R\bulletop T) \\[1ex]
    (T\blacktriangleleft S)\bulletop S \,\sqsubseteq\, T
    \quad\quad\quad
    T \,\sqsubseteq\, (T\bulletop R)\blacktriangleleft R\\
    (R \blacktriangleright S)\blacktriangleleft T = R\blacktriangleright (S\blacktriangleleft T)
    \end{gather*}
We know $R\blacktriangleright T$ also as the right Kan extension of $T$ along $R$, while $T\blacktriangleleft S$ is the dual of a right Kan extension obtained from reversing 1-cells but not 2-cells.
% https://q.uiver.app/#q=WzAsNixbMiwyLCJaIl0sWzAsMSwiWSJdLFs0LDIsIlgiXSxbNCwwLCJaIl0sWzYsMSwiWSJdLFsyLDAsIlgiXSxbMSwwLCJSXFxibGFja3RyaWFuZ2xlcmlnaHQgVCIsMl0sWzIsMywiVCJdLFs0LDMsIlMiLDJdLFsyLDQsIlRcXGJsYWNrdHJpYW5nbGVsZWZ0IFMiLDJdLFs1LDAsIlQiXSxbNSwxLCJSIiwyXSxbMSwxMCwiXFxsZSIsMSx7InNob3J0ZW4iOnsic291cmNlIjoyMCwidGFyZ2V0IjoyMH0sInN0eWxlIjp7ImJvZHkiOnsibmFtZSI6Im5vbmUifSwiaGVhZCI6eyJuYW1lIjoiZW5kIn19fV0sWzQsNywiXFxnZSIsMSx7InNob3J0ZW4iOnsic291cmNlIjoyMCwidGFyZ2V0IjoyMH0sInN0eWxlIjp7ImJvZHkiOnsibmFtZSI6Im5vbmUifSwiaGVhZCI6eyJuYW1lIjoiZW5kIn19fV1d
\[\begin{tikzcd}
	&& X && Z \\
	Y &&&&&& Y \\
	&& Z && X
	\arrow["{R\blacktriangleright T}"', from=2-1, to=3-3]
	\arrow[""{name=0, anchor=center, inner sep=0}, "T", from=3-5, to=1-5]
	\arrow["S"', from=2-7, to=1-5]
	\arrow["{T\blacktriangleleft S}"', from=3-5, to=2-7]
	\arrow[""{name=1, anchor=center, inner sep=0}, "T", from=1-3, to=3-3]
	\arrow["R"', from=1-3, to=2-1]
	\arrow["\sqsubseteq"{description}, draw=none, from=2-1, to=1]
	\arrow["\sqsupseteq"{description}, draw=none, from=2-7, to=0]
\end{tikzcd}\]
The right-adjoints can be computed explicitely as
\begin{equation}\label{eq:RTS}
(R\blacktriangleright T) (y,z)=\bigsqcap_{x\in X}R(x,y)\rhd T(x,z)
    \quad\quad
(T\blacktriangleleft S)(x,y)=\bigsqcap_{z\in Z} T(x,z) \lhd S(y,z)
\end{equation}
From this remark and the quantale laws in \ref{sec:quantale-laws} the following two equations are easy to derive.
\end{remark}

\begin{lemma}\label{lem:double-blacktriangle}
% https://q.uiver.app/#q=WzAsOCxbNSwwLCJaIl0sWzUsMiwiWCJdLFs3LDEsIlkiXSxbNywzLCJZJyJdLFsyLDAsIlgiXSxbMiwyLCJaIl0sWzAsMSwiWSJdLFswLDMsIlknIl0sWzEsMCwiVCJdLFsyLDAsIlMiLDJdLFszLDIsIlMnIiwyXSxbMSwyLCJUXFxibGFja3RyaWFuZ2xlbGVmdCBTIiwyLHsic3R5bGUiOnsiYm9keSI6eyJuYW1lIjoiZG90dGVkIn19fV0sWzEsMywiKFRcXGJsYWNrdHJpYW5nbGVsZWZ0IFMpXFxibGFja3RyaWFuZ2xlbGVmdCBTJz1UXFxibGFja3RyaWFuZ2xlbGVmdCAoUydcXGJ1bGxldCBTKSIsMix7InN0eWxlIjp7ImJvZHkiOnsibmFtZSI6ImRvdHRlZCJ9fX1dLFs0LDUsIlQiXSxbNCw2LCJSIiwyXSxbNiw3LCJSIiwyXSxbNiw1LCJUXFxibGFja3RyaWFuZ2xlcmlnaHQgUlxcYmxhY2t0cmlhbmdsZXJpZ2h0IFQiLDIseyJzdHlsZSI6eyJib2R5Ijp7Im5hbWUiOiJkb3R0ZWQifX19XSxbNyw1LCJSJ1xcYmxhY2t0cmlhbmdsZXJpZ2h0KFJcXGJsYWNrdHJpYW5nbGVyaWdodCBUKT0oUlxcYnVsbGV0IFInKVxcYmxhY2t0cmlhbmdsZXJpZ2h0IFQiLDIseyJzdHlsZSI6eyJib2R5Ijp7Im5hbWUiOiJkb3R0ZWQifX19XV0=
\[\begin{tikzcd}
	&& X &&& Z \\
	Y &&&&&&& Y \\
	&& Z &&& X \\
	{Y'} &&&&&&& {Y'}
	\arrow["T", from=3-6, to=1-6]
	\arrow["S"', from=2-8, to=1-6]
	\arrow["{S'}"', from=4-8, to=2-8]
	\arrow["{T\blacktriangleleft S}"', dotted, from=3-6, to=2-8]
	\arrow["{(T\blacktriangleleft S)\blacktriangleleft S'=T\blacktriangleleft (S'\bullet S)}"', dotted, from=3-6, to=4-8, pos=0.7]
	\arrow["T", from=1-3, to=3-3]
	\arrow["R"', from=1-3, to=2-1]
	\arrow["{R'}"', from=2-1, to=4-1]
	\arrow["{R\blacktriangleright T}"', dotted, from=2-1, to=3-3]
	\arrow["{R'\blacktriangleright(R\blacktriangleright T)=(R\bullet R')\blacktriangleright T}"', dotted, from=4-1, to=3-3, pos=0.3]
\end{tikzcd}\]
\end{lemma}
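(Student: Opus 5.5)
The two equations to prove are
$$(T\blacktriangleleft S)\blacktriangleleft S' = T\blacktriangleleft (S'\bullet S)
\qquad\text{and}\qquad
R'\blacktriangleright(R\blacktriangleright T) = (R\bullet R')\blacktriangleright T,$$
with $T:X\looparrowright Z$, $S:Y\looparrowright Z$, $S':Y'\looparrowright Y$ in the first case, and $T:X\looparrowright Z$, $R:X\looparrowright Y$, $R':Y\looparrowright Y'$ in the second. I would prove both by a Yoneda-style argument in the posets $\Rel(X,Y')$, using only the residuation of Remark~\ref{rmk:blacktriangle} and the associativity of $\bullet$ from Lemma~\ref{lemma: bullet is associative}. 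This avoids computing meets with \eqref{eq:RTS}. The guiding fact is that right adjoints of composable left adjoints compose: $-\bullet(S'\bullet S) = (-\bullet S')\bullet S$.

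For the first equation, take an arbitrary relation $U:X\looparrowright Y'$. Then
\begin{align*}
U\sqsubseteq (T\blacktriangleleft S)\blacktriangleleft S'
&\iff U\bullet S'\sqsubseteq T\blacktriangleleft S\\
&\iff (U\bullet S')\bullet S\sqsubseteq T\\
&\iff U\bullet(S'\bullet S)\sqsubseteq T\\
&\iff U\sqsubseteq T\blacktriangleleft(S'\bullet S).
\end{align*}
The first two steps use the residuation $-\bullet S\dashv -\blacktriangleleft S$ (once for $S'$, once for $S$). The third uses Lemma~\ref{lemma: bullet is associative}, and the last uses residuation for $S'\bullet S$. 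Since $U$ is arbitrary and $\sqsubseteq$ on $\Rel(X,Y')$ is antisymmetric (it is pointwise the order of $\Omega$), the two sides are equal.

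The second equation is the mirror image. For arbitrary $U:Y'\looparrowright Z$,
\begin{align*}
U\sqsubseteq R'\blacktriangleright(R\blacktriangleright T)
&\iff R'\bullet U\sqsubseteq R\blacktriangleright T\\
&\iff R\bullet(R'\bullet U)\sqsubseteq T\\
&\iff (R\bullet R')\bullet U\sqsubseteq T\\
&\iff U\sqsubseteq (R\bullet R')\blacktriangleright T.
\end{align*}
This uses $R\bullet-\dashv R\blacktriangleright-$ in the same way, together with associativity.

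I expect no real obstacle. The only points needing care are the order of composition in the diagrammatic convention, so that $S'\bullet S$ rather than $S\bullet S'$ appears, and checking the typing, so that each residual is applied to relations with matching domains and codomains. Alternatively, one could verify the equations pointwise from \eqref{eq:RTS} with the quantale laws $a\rhd(b\rhd c)=(b\cdot a)\rhd c$ and $(c\lhd a)\lhd b=c\lhd(b\cdot a)$ (which follow from associativity in the same way). The adjunction argument is cleaner.
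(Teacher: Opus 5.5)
Your proof is correct and is essentially the argument the paper intends. The paper writes out no proof and only says the equations are easy to derive from Remark~\ref{rmk:blacktriangle}, whose residuation together with associativity of $\bullet$ lifts the laws of Section~\ref{sec:quantale-laws} to relations; your chain of equivalences over an arbitrary test relation $U$ is exactly that lifting made explicit, with the typing and the diagrammatic order ($S'\bullet S$, $R\bullet R'$) handled correctly, and your pointwise alternative via \eqref{eq:RTS} and $a\rhd(b\rhd c)=(b\cdot a)\rhd c$, $(c\lhd a)\lhd b=c\lhd(b\cdot a)$ also works.
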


\subsection{Duality} 
In sets, every relation $R:A\looparrowright B$ has a converse $R^o:B\looparrowright A$. Enriching over a commutative $\Omega$ the opposite relation is not a converse anymore since it is now of type $R^o:B^o\looparrowright A^o$. Enriching over a non-commutative quantale, the opposite relation is in a different category since it is now enriched over $\Omega^o$.

\subsection{Weighted upsets and downsets} A \emph{weighted downset} of $X$, or a presheaf on $X$, is a quantale relation $X\looparrowright 1$ where $1$ is the one-element $\Omega$-space. A \emph{weighted upset} of $Y$, or a co-presheaf on $Y$, is a quantale relation $1\looparrowright Y$. In particular, for a presheaf $\phi$ on $X$, we have
$$X(x',x)\cdot \phi(x) \sqsubseteq \phi(x')
\quad\quad\quad
X(x',x)\sqsubseteq\phi(x')\lhd \phi(x)$$
and for a co-presheaf $\psi$ on $Y$ we have
$$
\psi(y)\cdot Y(y,y')\sqsubseteq\psi(y')
\quad\quad\quad
Y(y,y')\sqsubseteq \psi(y)\rhd\psi(y')
$$
where we simplify the notation $\phi(x,y)$ and $\psi(x,y)$ by dropping the variable of type $1$.

\begin{definition}\label{def:DU}
    Define the $\Omega$-space $\mathcal DX=\Rel(\Omega)(X,1)$ of weighted downsets via
    $$\mathcal DX(\phi,\phi')=\phi\blacktriangleright\phi'$$
    and the $\Omega^o$-space $\mathcal UA=\Rel(\Omega)(1,A)$ of weighted upsets via 
    $$\mathcal UA(\psi,\psi')=\psi \blacktriangleleft \psi'$$
\end{definition}

\begin{remark}\label{rmk:DU} Recall  that \eqref{eq:RTS} implies 
\begin{align*}
\phi\blacktriangleright\phi' & =\bigsqcap_{x\in X} (\phi x\rhd \phi' x)\\
\psi \blacktriangleleft \psi' &=\bigsqcap_{a\in A} (\psi a\lhd \psi' a)
\end{align*}
This ensures that the Yoneda embeddings $X\to\mathcal DX, x\mapsto X(-,x)$ and $A\to\mathcal UA, a\mapsto A(a,-)$ are functorial.
\end{remark}

\begin{lemma}\label{lem:blackrightleft}
Let $\cont:X\looparrowright A$, $\phi\in\mathcal DX$, $\psi\in\mathcal UA$. Then
\begin{equation}\label{eq:adjunction}
(\phi\blacktriangleright \cont)\blacktriangleleft \psi = \phi\blacktriangleright (\cont \blacktriangleleft \psi),
\end{equation}
or, in a diagram:
% https://q.uiver.app/#q=WzAsNCxbMCwwLCJYIl0sWzIsMCwiQSJdLFswLDIsIjEiXSxbMiwyLCIxIl0sWzAsMSwiXFxiZiBJIl0sWzAsMiwiXFxwaGkiLDJdLFszLDEsIlxccHNpIiwyXSxbMiwzLCJcXHBoaVxcYmxhY2t0cmlhbmdsZXJpZ2h0IFxcYmYgSSBcXGJsYWNrdHJpYW5nbGVsZWZ0IFxccHNpIiwyXV0=
\[\begin{tikzcd}
	X && A \\
	\\
	1 && 1
	\arrow["{\bf I}", from=1-1, to=1-3]
	\arrow["\phi"', from=1-1, to=3-1]
	\arrow["\psi"', from=3-3, to=1-3]
	\arrow["{\phi\blacktriangleright \bf I \blacktriangleleft \psi}"', from=3-1, to=3-3]
\end{tikzcd}\]
\end{lemma}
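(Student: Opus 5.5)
The cleanest route is to derive \eqref{eq:adjunction} from the residuation of Remark~\ref{rmk:blacktriangle} together with associativity of $\bulletop$ (Lemma~\ref{lemma: bullet is associative}), by a Yoneda-style argument: two elements of $\Rel(1,1)=\Omega$ coincide iff they have the same elements below them. First I would check that both sides are well-typed elements of $\Rel(1,1)$. Since $\phi:X\looparrowright 1$ and $\cont:X\looparrowright A$, we get $\phi\blacktriangleright\cont: 1\looparrowright A$. With $\psi:1\looparrowright A$, this gives $(\phi\blacktriangleright\cont)\blacktriangleleft\psi:1\looparrowright 1$. Symmetrically, $\cont\blacktriangleleft\psi:X\looparrowright 1$ and so $\phi\blacktriangleright(\cont\blacktriangleleft\psi):1\looparrowright 1$.

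Next, fix an arbitrary $r:1\looparrowright 1$, i.e.\ an element of $\Omega$. I would compute the chain of equivalences
\begin{align*}
r\sqsubseteq (\phi\blacktriangleright\cont)\blacktriangleleft\psi
&\iff r\bulletop\psi\sqsubseteq \phi\blacktriangleright\cont
\iff \phi\bulletop(r\bulletop\psi)\sqsubseteq \cont\\
&\iff (\phi\bulletop r)\bulletop\psi\sqsubseteq\cont
\iff \phi\bulletop r\sqsubseteq \cont\blacktriangleleft\psi
\iff r\sqsubseteq\phi\blacktriangleright(\cont\blacktriangleleft\psi).
\end{align*}
Each step is one of the adjunctions $R\bulletop-\dashv R\blacktriangleright-$ and $-\bulletop S\dashv -\blacktriangleleft S$, except the middle step, which is Lemma~\ref{lemma: bullet is associative}. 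Since $r$ is arbitrary and $\sqsubseteq$ on $\Omega$ is antisymmetric, the two sides are equal.

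The only point needing care, and the step I expect to be the main obstacle, is confirming that the residuals of Remark~\ref{rmk:blacktriangle} really exist at these types. That is, $\phi\blacktriangleright-$ must be right adjoint to $\phi\bulletop-$ on relations out of $1$, and the results must be genuine weighted relations (satisfying the two module inequalities) and not merely functions into $\Omega$. This is exactly what the remark asserts for arbitrary $R,S,T$, so I would simply invoke it.

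As a sanity check, one could instead compute directly with \eqref{eq:RTS}. Both sides then reduce to
\[
\bigsqcap_{x\in X}\bigsqcap_{a\in A}\phi(x)\rhd(\cont(x,a)\lhd\psi(a)),
\]
after swapping meets and using that $-\lhd\psi(a)$ and $\phi(x)\rhd-$ preserve meets, together with the quantale law $(r\rhd s)\lhd t=r\rhd(s\lhd t)$.
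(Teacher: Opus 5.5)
Your proof is correct and is essentially the paper's argument. The paper simply cites the identity $(R\blacktriangleright S)\blacktriangleleft T = R\blacktriangleright(S\blacktriangleleft T)$ from Remark~\ref{rmk:blacktriangle}, which is justified there by lifting the quantale law $(r\rhd s)\lhd t = r\rhd(s\lhd t)$ through the residuation adjunctions and associativity of $\bullet$; your chain of equivalences is exactly that lifting, written out at the types $R=\phi$, $S=\cont$, $T=\psi$.
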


\begin{proof}
This is an instance of an equation presented in Remark \ref{rmk:blacktriangle}.
\end{proof}

\begin{remark}
    $(\mathcal D A^o)^o=\mathcal U A$. 
\end{remark}

\subsection{Duality}
We have
$$ \Rel(\Omega) \cong \Rel(\Omega^o)^o
\quad\quad\quad
\Spa(\Omega) \cong \Spa(\Omega^o)^c
$$
where $(-)^o$ reverses 1-cells and $(-)^c$ reverses 2-cells.
The iso on the left maps $R:A\looparrowright B$ to $R^o:B^o\to A^o$. The iso on the right maps $F:A\to B$ to $F^o:A^o\to B^o$.
Moreover, there
% \begin{proposition}
% There 
are isomorphisms of posets, natural in $X$ and $A$,
$$\Rel(\Omega)(X,A) \cong \Spa(\Omega)^o(\mathcal DX, A)\cong \Spa(\Omega)^c(X,\mathcal UA).$$
%\end{remark}

\subsection{Weighted (Co)Limits}\label{sec:weighted-colimits}

We follow Stubbe~\cite{Stubbe05a}. 

% https://q.uiver.app/#q=WzAsMTIsWzQsMiwiQiJdLFsyLDEsIkMiXSxbNCwwLCJEIl0sWzgsMiwiQiJdLFs4LDAsIkQiXSxbNiwxLCJDIl0sWzIsMywiQSJdLFs0LDQsIkIiXSxbNiwzLCJBIl0sWzgsNCwiQiJdLFswLDAsIkQiXSxbMCwyLCJCIl0sWzEsMCwiXFxwaGlcXGJsYWNrdHJpYW5nbGVyaWdodCBCKEcsLSkiLDIseyJsYWJlbF9wb3NpdGlvbiI6NDAsInN0eWxlIjp7ImJvZHkiOnsibmFtZSI6ImJhcnJlZCJ9fX1dLFsyLDAsIkIoRywtKSIsMCx7InN0eWxlIjp7ImJvZHkiOnsibmFtZSI6ImJhcnJlZCJ9fX1dLFsyLDEsIlxccGhpIiwyLHsic3R5bGUiOnsiYm9keSI6eyJuYW1lIjoiYmFycmVkIn19fV0sWzMsNCwiQigtLEcpIiwyLHsic3R5bGUiOnsiYm9keSI6eyJuYW1lIjoiYmFycmVkIn19fV0sWzMsNSwiQigtLEcpIFxcYmxhY2t0cmlhbmdsZWxlZnQgXFxwc2kiLDAseyJsYWJlbF9wb3NpdGlvbiI6NjAsInN0eWxlIjp7ImJvZHkiOnsibmFtZSI6ImJhcnJlZCJ9fX1dLFs1LDQsIlxccHNpIiwwLHsic3R5bGUiOnsiYm9keSI6eyJuYW1lIjoiYmFycmVkIn19fV0sWzYsNywie1xccm0gY29saW19X1xccGhpXFwsIEciXSxbOCw5LCJcXGxpbV9cXHBzaSBHIl0sWzEwLDExLCJHIl0sWzUsMTUsIlxcbGUiLDEseyJzaG9ydGVuIjp7InRhcmdldCI6MjB9LCJzdHlsZSI6eyJib2R5Ijp7Im5hbWUiOiJub25lIn0sImhlYWQiOnsibmFtZSI6Im5vbmUifX19XSxbMSwxMywiXFxsZSIsMSx7InNob3J0ZW4iOnsic291cmNlIjoyMCwidGFyZ2V0IjoyMH0sInN0eWxlIjp7ImJvZHkiOnsibmFtZSI6Im5vbmUifSwiaGVhZCI6eyJuYW1lIjoibm9uZSJ9fX1dXQ==
\[\begin{tikzcd}
	D &&&& D &&&& D \\
	&& C &&&& C \\
	B &&&& B &&&& B \\
	&& C &&&& C \\
	&&&& B &&&& B
	\arrow["{\phi\blacktriangleright B(G,-)}"'{pos=0.8}, "\shortmid"{marking}, from=2-3, to=3-5]
	\arrow[""{name=0, anchor=center, inner sep=0}, "{B(G,-)}", "\shortmid"{marking}, from=1-5, to=3-5]
	\arrow["\phi"', "\shortmid"{marking}, from=1-5, to=2-3]
	\arrow[""{name=1, anchor=center, inner sep=0}, "{B(-,G)}"', "\shortmid"{marking}, from=3-9, to=1-9]
	\arrow["{B(-,G) \blacktriangleleft \psi}"{pos=0.2}, "\shortmid"{marking}, from=3-9, to=2-7]
	\arrow["\psi", "\shortmid"{marking}, from=2-7, to=1-9]
	\arrow["{{\rm colim}_\phi\, G}", from=4-3, to=5-5,{pos=0.4}]
	\arrow["{\lim_\psi G}", from=4-7, to=5-9,{pos=0.4}]
	\arrow["G", from=1-1, to=3-1]
	\arrow["\le"{description}, draw=none, from=2-7, to=1]
	\arrow["\le"{description}, draw=none, from=2-3, to=0]
\end{tikzcd}\]
The \emph{weighted colimit} ${\rm colim}_\phi\,G$ is the (unique, if it exist) solution of
\begin{gather*}
B({\rm colim}_\phi G,b) = \phi\blacktriangleright B(G,b)
\end{gather*}
The \emph{weighted limit} ${\rm lim}_\psi G$ is the (unique, if it exists) solution of
\begin{gather*}
B(b, {\rm lim}_\psi G) = B(b,G)\blacktriangleleft\psi
\end{gather*}
We will be mostly concerned with the case $C=1$ where $\phi$ is a weighted downset of $D$ and $\psi$ is a weighted upset of $D$.

\begin{example} \label{exle:weighted-limit} We continue from Example~\ref{exle:omega}.
\begin{enumerate}
\item In $\Spa(2)$, for $D=B$ and $G$ the identity, ${\rm colim_\phi}G$ is the join of $\phi$ and ${\rm lim}_\psi$ is the meet of $\psi$.

\item Weighted (co)limits in (ultra)metric spaces were studied by Wagner~\cite{Wagner:PhD,Wagner97} and Rutten et.al.~\cite{Rutten96,Rutten98,Bonsangue98}.

\item Let $A$ be $\mathcal P(\Sigma^\ast)$-space, that is, a generalized non-deterministic automaton. 

Let $f$ be a function from the underlying set $|A|$ of $A$ to $\{\emptyset,\epsilon\}$, interpreted as the characteristic function of the set of final states of $A$. Then $f$ can be extended to the ``observability" presheaf $\phi\in \mathcal DA$ so that $\phi(q)$ is the language accepted by $A$ in state $q$. Technically, $\phi$ is the colimit of $|A|\to A \to \mathcal DA$ weighted by $f$. 

Dually, let $i$ be a function from the underlying set $|A|$ of $A$ to $\{\emptyset,\epsilon\}$, interpreted as the characterstic function of the set of initital states of $A$. Then $i$ can be extended to the ``reachability" presheaf $\psi\in\mathcal UA$ so that $\psi(q)$ is the language of words leading from an initial state to $q$. Technically, $\psi$ is the colimit of $|A|\to A \to \mathcal UA$ weighted by $i$. 
\end{enumerate}
To be continued in Example~\ref{exle:tensor}.
\end{example}

\subsection{Join, Meet, Tensor, Power}

The so-called conical (co)limits are the special case where the (co)presheaf has constant value $e\in\Omega$. In that case we can drop the (co)presheaf from the notation and have
$$
{\rm colim}\, G=\bigsqcup_{d\in D} Gd
\quad\quad\quad
{\rm lim}\, G=\bigsqcap_{d\in D} Gd
$$
The tensor (copower) and the power (cotensor) are the special case where $D=1$. In this case, writing $g$ for the value of $G$ and $r$ for the value of $\phi$ (resp $\psi$) in $\Omega$, we have 
$$
B(g\star r,b)=r \rhd B(g,b) \quad\quad B(b,g \uparrow r)= B(b,g)\lhd r
$$
where we write $- \star r$ for tensoring with $r\in\Omega$ and $-\uparrow r$ for taking to the power of $r$. 

\begin{remark}
In case of $B=\Omega$ enriched over itself, we have $\star=\cdot$ and $\uparrow=\lhd$.
\end{remark}
\begin{example} \label{exle:tensor} We continue from Example~\ref{exle:weighted-limit}.
\begin{enumerate}
\item In $[0,\infty]$, seen as enriched over itself, tensoring with $r\in[0,\infty]$ is addition $+r$ and power is truncated subtraction $\dotminus r$. In the LMS $\mathbb R$ with its natural order tensoring is truncated subtraction and power is addition.

\item For a determinstic automata $A$, considered as an element of $\Spa(\mathcal P(\Sigma^\ast))$, tensoring a state with $\{a\}$ gives the $a$-successor, that is, $q\cdot \{a\}$ is the $a$-successor of $q$ in $A$.

In the history automaton $\mathcal P(\Sigma^\ast)$, which has  $\rhd$ as hom, tensoring a state $M$ with $L$ is given by the product $M\cdot L$ of languages.

In the prophecy automaton $\mathcal P(\Sigma^\ast)$, which has $\lhd$ as hom, tensoring a state $M$ with $L$ is given by the left-residual $L\rhd M$ of languages. In particular, tensoring with $\{w\}$ is the so-called Brzozowski derivative.  

In each of these examples, tensoring formalizes a notion of successor.
\end{enumerate}
\end{example}

\begin{proposition} Limits and colimits in an $\Omega$-space that has tensors and powers as well as all conical limits can be computed explicitely as 
\begin{align*}
{\rm colim}_\phi G &= \bigsqcup _{d\in D}(Gd\star\phi d) \\
{\rm lim}_\psi G &= \bigsqcap_{d\in D} (Gd\uparrow\psi d).
\end{align*}
\end{proposition}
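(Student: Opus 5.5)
We verify directly that the proposed object satisfies the defining equation of the weighted colimit; the limit case is dual. Since weighted colimits are unique when they exist (up to the underlying order), this identifies them. Throughout we take $C=1$, so that $\phi$ is a presheaf on $D$ and $G:D\to B$ is a functor. The only ingredients are the universal properties of conical colimits, of tensors, and of infima in $\Omega$.

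Put $c=\bigsqcup_{d\in D}(Gd\star\phi d)$. This is a conical colimit, which exists by assumption. A conical colimit is the colimit weighted by the constant presheaf $e$ on the discrete space $D$, so
$$B(c,b)=\bigsqcap_{d\in D}\big(e\rhd B(Gd\star\phi d,b)\big)=\bigsqcap_{d\in D} B(Gd\star\phi d,b),$$
using $e\rhd s=s$. The tensor property gives $B(Gd\star\phi d,b)=\phi d\rhd B(Gd,b)$, and therefore
$$B(c,b)=\bigsqcap_{d\in D}\phi d\rhd B(Gd,b)=\phi\blacktriangleright B(G,b)$$
by the explicit formula \eqref{eq:RTS} (cf.\ Remark~\ref{rmk:DU}). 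This is exactly the defining equation $B({\rm colim}_\phi G,b)=\phi\blacktriangleright B(G,b)$, so $c$ is ${\rm colim}_\phi G$.

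For limits, put $\ell=\bigsqcap_d(Gd\uparrow\psi d)$. We have $B(b,\ell)=\bigsqcap_d B(b,Gd\uparrow\psi d)=\bigsqcap_d B(b,Gd)\lhd\psi d$, which equals $B(b,G)\blacktriangleleft\psi$ by \eqref{eq:RTS}.

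The main obstacle is that the index set $D$ carries a nontrivial hom, whereas $\bigsqcup$ and $\bigsqcap$ here denote conical (co)limits over the underlying set of $D$ with constant weight $e$. This step needs care: the weight $\phi$ being a presheaf (that is, $D(d',d)\cdot\phi d\sqsubseteq\phi d'$) is not needed for the calculation, since \eqref{eq:RTS} already ranges over all $d$ without using the hom of $D$. So we simply take the conical (co)limit over the discrete space $|D|$, and the identity $e\rhd s=s$ (respectively $s\lhd e=s$) finishes the argument.
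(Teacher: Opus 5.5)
Your proof is correct and is essentially the paper's argument. The paper runs the same chain of equalities: the defining equation of ${\rm colim}_\phi G$, the pointwise formula \eqref{eq:RTS} for $\blacktriangleright$, the tensor property $B(g\star r,b)=r\rhd B(g,b)$, and the conical colimit property. It simply reads the chain from $B({\rm colim}_\phi G,b)$ toward $B(\bigsqcup_d Gd\star\phi d,b)$ rather than starting from the candidate object, and the limit case is dual. Your observation that the conical (co)limits $\bigsqcup_d$ and $\bigsqcap_d$ must be taken over the discrete space $|D|$ with constant weight $e$ (so that $e\rhd s=s$ and $s\lhd e=s$ close the argument) makes explicit a point the paper leaves implicit.
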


\begin{proof} The proposition is well-known. We present the proof because similar computations will play a role later on.
\begin{align*}
B({\rm colim}_\phi G,b) &= \phi\blacktriangleright B(G,b)\\
&= \bigsqcap_{d\in D}(\phi x\rhd B(Gx,b))\\
&= \bigsqcap_{d\in D} B(Gx\star\phi x,b)\\
&= B(\bigsqcup_{d\in D} Gx\star\phi x,b).
\end{align*} 
\begin{align*}
B(b,{\rm lim}_\psi G) &= B(b,G)\blacktriangleleft \psi\\
& = \bigsqcap_{d\in D} (B(b,Gd)\lhd\psi d)\\
& = \bigsqcap_{d\in D} B(b,Gx\uparrow\psi x)\\
& = B(b,\bigsqcap_{d\in D} (Gx\uparrow\psi x)).
\end{align*}
\end{proof}

\begin{corollary}
The composition $1\stackrel \psi\looparrowright X \stackrel\phi \looparrowright 1$ is the colimit of $\psi:X\to\Omega$ weighted by $\phi$
\begin{equation}\label{eq:psi-phi}
{\rm colim_\phi}\psi = \bigsqcup_x \psi x\cdot \phi x = \psi\bullet \phi.
\end{equation}
\end{corollary}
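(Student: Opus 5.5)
The corollary will follow from the preceding proposition, applied to $B=\Omega$ enriched over itself and $G=\psi$ regarded as a functor into $\Omega$. I first fix the enrichment. A co-presheaf $\psi:1\looparrowright X$ satisfies $\psi(x)\cdot X(x,x')\sqsubseteq\psi(x')$, which is exactly functoriality of $\psi:X\to\Omega$ when $\Omega$ carries the hom $\Omega(a,b)=a\rhd b$ (case 1 of the duality list). With this hom the tensor is $\star=\cdot$, as recorded in the remark following the definition of tensor and power. I will check this directly from the defining equation: $\Omega(g\cdot r,b)=(g\cdot r)\rhd b=r\rhd(g\rhd b)=r\rhd\Omega(g,b)$. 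This uses the quantale law $(g\cdot r)\rhd b=r\rhd(g\rhd b)$, which follows from associativity and the residuation adjunction.

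Second, $\Omega$ is complete as an $\Omega$-space. It has all tensors by the computation above, and it has all powers because $\Omega(b,g\lhd r)=b\rhd(g\lhd r)=(b\rhd g)\lhd r$ by the law $r\rhd(s\lhd t)=(r\rhd s)\lhd t$ of Section~\ref{sec:quantale-laws}. For conical limits I use meets: $\Omega(b,\bigsqcap_i g_i)=\bigsqcap_i(b\rhd g_i)$, since $b\rhd-$ is a right adjoint. For conical colimits I use joins: $\Omega(\bigsqcup_i g_i,b)=\bigsqcap_i(g_i\rhd b)$, since $-\cdot c$ preserves joins. The proposition therefore applies and gives ${\rm colim}_\phi\psi=\bigsqcup_x\psi x\cdot\phi x$.

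Finally, this join is by definition the composite $(\psi\bullet\phi)(\ast,\ast)$ of $1\looparrowright X\looparrowright 1$, because $(\psi\bullet\phi)(\ast,\ast)=\bigsqcup_x\psi(\ast,x)\cdot\phi(x,\ast)$. One could also bypass the proposition and verify directly that
\[
(\psi\bullet\phi)\rhd b=\bigsqcap_x\bigl(\phi x\rhd(\psi x\rhd b)\bigr)=\phi\blacktriangleright\Omega(\psi,b).
\]

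The main obstacle is bookkeeping rather than substance. I must use the $\rhd$-enrichment on $\Omega$, since only that choice makes $\psi$ a functor and gives $\star=\cdot$ in the correct order $\psi x\cdot\phi x$. I must also apply the residual identity $(g\cdot r)\rhd b=r\rhd(g\rhd b)$ with its arguments in the right order for a non-commutative $\Omega$.
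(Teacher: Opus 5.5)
Your proposal is correct and follows essentially the same route as the paper: the corollary is the preceding proposition instantiated at $B=\Omega$ with hom $a\rhd b$, where the remark gives $\star=\cdot$, after which the join is by definition the composite $\psi\bullet\phi$. Your additional checks, namely that this enrichment makes $\psi$ a functor, that $\Omega$ has tensors, powers and conical (co)limits, and the one-line direct verification $(\psi\bullet\phi)\rhd b=\phi\blacktriangleright\Omega(\psi,b)$, supply details the paper leaves implicit and are all sound.
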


The following proofs exemplify how the introduced  notation enables straightforward algebra-style proofs.

\subsection{Yoneda preserves limits} \label{sec:Yoneda-preserves-limits} The Yoneda embedding $C\to\mathcal DC$ preserves limits and the Yoneda embedding $C\to\mathcal UC$ preserves colimits.

\begin{proof} %We write the proof out in detail as a test for the blacktriangle calculus. 
Let $G:X\to C$ and $g\in\mathcal UX$ and $j\in\mathcal DX$. Consider the Yoneda embedding $Y:C\to\mathcal DC$, $c\mapsto C(-,c)$. We have to show 
$$Y({\rm lim}_g\, G) = {\rm lim}_g\, YG.$$ 
Observe that, by the definition of limits in $C$, 
$$Y({\rm lim}_g\,G) = YG\blacktriangleleft g$$
and use Lemma~\ref{lem:blackrightleft} with $\cont(c,x) = \mathcal C(c,Gx)$:
\begin{align*}
\mathcal DC(\phi,Y ({\rm lim}_g\, G))
&= \phi\blacktriangleright Y ({\rm lim}_g\, G)
& \text{def of } \mathcal DC
\\
&= \phi\blacktriangleright (YG \blacktriangleleft g)
& \text{def of lim in } C
\\
&= (\phi\blacktriangleright YG) \blacktriangleleft g
& \eqref{eq:adjunction}
\\
&= \mathcal DC(\phi,YG)\blacktriangleleft g
& \text{def of } \mathcal DC
\\
&= \mathcal DC(\phi,{\rm lim}_g\,YG)
& \text{def of lim in } \mathcal DC 
\end{align*}
\end{proof}

\begin{lemma}\label{lem:lim-colim} For $\phi\in\mathcal DX$, $\psi\in\mathcal UA$, $\cont:X\looparrowright A$, hence $\cont:X\to \mathcal UA$ and $\cont:A\to\mathcal DX$, we have
$$\mathcal DX(\phi,{\rm lim}_\psi\, \cont)=\mathcal UA({\rm colim}_\phi\,\cont,\psi)$$
\end{lemma}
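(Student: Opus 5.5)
The plan is to reduce both sides to the two sides of equation \eqref{eq:adjunction} of Lemma~\ref{lem:blackrightleft}, using only the definitions of $\mathcal DX$, $\mathcal UA$ and of weighted (co)limits.

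The first step is to identify the limit. Here $\cont$ is regarded as a functor $A\to\mathcal DX$, $a\mapsto \cont(-,a)$, and $\psi$ is a weighted upset on $A$, so ${\rm lim}_\psi\,\cont$ is a weighted limit in $\mathcal DX$. As in the argument of Section~\ref{sec:Yoneda-preserves-limits}, I expect
\[
{\rm lim}_\psi\,\cont = \cont\blacktriangleleft\psi .
\]
To justify this, I would check the defining equation
\[
\mathcal DX(\phi',\cont\blacktriangleleft\psi)=\mathcal DX(\phi',\cont)\blacktriangleleft\psi
\]
for every $\phi'\in\mathcal DX$. By Definition~\ref{def:DU}, the left side is $\phi'\blacktriangleright(\cont\blacktriangleleft\psi)$ and the right side is $(\phi'\blacktriangleright\cont)\blacktriangleleft\psi$. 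These agree by \eqref{eq:adjunction}. One also has to check that $\cont\blacktriangleleft\psi:X\looparrowright 1$ is a genuine weighted downset; this follows from the pointwise formula \eqref{eq:RTS} together with the left-module inequality for $\cont$.

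The second step is the dual computation for the colimit. Here $\cont$ is regarded as a functor $X\to\mathcal UA$, $x\mapsto\cont(x,-)$, and I expect
\[
{\rm colim}_\phi\,\cont=\phi\blacktriangleright\cont .
\]
The check is
\[
\mathcal UA(\phi\blacktriangleright\cont,\psi')=(\phi\blacktriangleright\cont)\blacktriangleleft\psi'=\phi\blacktriangleright(\cont\blacktriangleleft\psi')=\phi\blacktriangleright\mathcal UA(\cont,\psi'),
\]
which is the defining equation of ${\rm colim}_\phi$.

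The final step is then
\[
\mathcal DX(\phi,{\rm lim}_\psi\cont)=\phi\blacktriangleright(\cont\blacktriangleleft\psi)=(\phi\blacktriangleright\cont)\blacktriangleleft\psi=\mathcal UA({\rm colim}_\phi\cont,\psi),
\]
again using \eqref{eq:adjunction}. This is the statement.

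I expect the main obstacle to be bookkeeping of variance. $\mathcal UA$ is an $\Omega^o$-space, so "colimit in $\mathcal UA$" must be read with the hom $\psi\blacktriangleleft\psi'$ and the appropriate dual form of the colimit definition. I would verify pointwise, via \eqref{eq:RTS} and Remark~\ref{rmk:DU}, that $\mathcal UA(\cont x,\psi')$ as a function of $x$ is exactly $\cont\blacktriangleleft\psi'$, and that $\mathcal DX(\phi',\cont a)$ is $\phi'\blacktriangleright\cont$. If the paper's convention instead takes $\phi$ as the weight on the $\Omega^o$-side, the same identities still apply with the residuals swapped symmetrically. In either reading, the equality of the two sides is precisely \eqref{eq:adjunction}.
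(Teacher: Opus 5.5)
Your proposal is correct and is essentially the paper's argument. The paper chains the defining equation of ${\rm lim}_\psi$, the definition of $\mathcal DX$, \eqref{eq:adjunction}, the definition of $\mathcal UA$ and the defining equation of ${\rm colim}_\phi$ into one computation; your preliminary identifications ${\rm lim}_\psi\,\cont=\cont\blacktriangleleft\psi$ and ${\rm colim}_\phi\,\cont=\phi\blacktriangleright\cont$ are not needed for the equality, though they make explicit that these (co)limits exist, which the paper takes for granted.
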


\begin{proof}
\begin{align*}
\mathcal DX(\phi,{\rm lim}_\psi\, \cont)
&= \mathcal DX(\phi,\cont)\blacktriangleleft \psi
& \text{def of lim}
\\
&= (\phi\blacktriangleright \cont)\blacktriangleleft \psi
& \text{def of } \mathcal DX
\\
&= \phi\blacktriangleright (\cont\blacktriangleleft \psi)
& \eqref{eq:adjunction}
\\
&= \phi\blacktriangleright \mathcal UA(\cont, \psi)
& \text{def of } \mathcal UA
\\
&= \mathcal UA({\rm colim}_\phi\,\cont, \psi)
& \text{ def of colim}
\end{align*}
\end{proof}

\section{MacNeille Completion}\label{sec:macneille}

Valuable resources on the MacNeille completion of an $\Omega$-enriched category are Stubbe~\cite{Stubbe05a}, Garner~\cite{Garner:topological}, Shen and Zhang~\cite{Shen-Zhang} and Fujii~\cite{Fujii}. We review the results we need later and set up the notation. 

We define the MacNeille completion for a relation rather than only for a category. This will allow us to define a complete and cocomplete category enriched over $\Omega$ simply by specifying a set-theoretic function $X\times A\to\Omega$. This is convenient both for future applications to logic  as well as for defining canonical extensions. 

We use the language of formal concept analysis:  A ``context'' $\cont:X\looparrowright A$ relates ``objects'' in $X$ and ``attributes'' in $A$;  a ``concept'' is a pair consisting of a weighted downset of objects (called the ``extent'') and a weighted upset of attributes (called the ``intent'').

%\begin{definition}
\subsection{The MacNeille Adjunction}
Recall the blacktriangle notation from Remark~\ref{rmk:blacktriangle}, Definition~\ref{def:DU} of weighted up- and downsets as well as the quantale laws from Section~\ref{sec:quantale-laws}.

The MacNeille adjunction induced by a weighted relation $\cont:X\looparrowright A$ is given by $\cont^\uparrow(\phi)=\phi\blacktriangleright \cont$ and by $\cont^\downarrow(\psi)=\cont\blacktriangleleft\psi$.
% https://q.uiver.app/#q=WzAsMixbMCwwLCJcXG1hdGhjYWwgRCBYIl0sWzIsMCwiXFxtYXRoY2FsIFUgQSJdLFswLDEsInBeXFx1cGFycm93IiwwLHsiY3VydmUiOi0yfV0sWzEsMCwicF5cXGRvd25hcnJvdyIsMCx7ImN1cnZlIjotMn1dXQ==
\[
\begin{tikzcd}
	{\mathcal{D}X} && {\mathcal{U}A}
	\arrow["{\cont^\uparrow \;=\; -\blacktriangleright \cont}", curve={height=-12pt}, from=1-1, to=1-3]
	\arrow["{\cont^\downarrow \;=\; \cont\blacktriangleleft -}", curve={height=-12pt}, from=1-3, to=1-1]
	\arrow[phantom, "\bot", from=1-1, to=1-3]
\end{tikzcd}
\]
%\end{definition}
\begin{remark}
From Remark~\ref{rmk:DU} it follows that 
\begin{align*}
(\phi\blacktriangleright \cont)(a)
=\phi\blacktriangleright\cont(-,a) 
=\, \bigsqcap_x\ \phi(x)\rhd\cont(x,a)
\\
(\cont\blacktriangleleft\psi)(x)
=\cont(x,-)\blacktriangleleft\psi
=\,\bigsqcap_a\ \cont(x,a)\lhd\psi(a)
\end{align*}
\end{remark}

%\begin{ak}
%Georgescu and Popescu~\cite{georgescu2003non} investigate non-commutative fuzzy Galois connections ... add one more sentence here or somewhere else ...
%\end{ak} 

\begin{lemma}\label{lem:adjunction}
    $\cont^\uparrow \dashv \cont^\downarrow$.
\end{lemma}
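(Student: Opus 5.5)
We need to show, for $\phi\in\mathcal DX$ and $\psi\in\mathcal UA$, the adjunction between the $\Omega$-space $\mathcal DX$ and the $\Omega^o$-space $\mathcal UA$. Since $\mathcal UA$ is enriched over $\Omega^o$ and its hom is $\psi\blacktriangleleft\psi'$, the enriched form of the adjunction is the hom-equality
$$\mathcal UA(\cont^\uparrow\phi,\psi)=\mathcal DX(\phi,\cont^\downarrow\psi).$$
The right-hand side unfolds to $\phi\blacktriangleright(\cont\blacktriangleleft\psi)$, and the left-hand side to $(\phi\blacktriangleright\cont)\blacktriangleleft\psi$. The plan is to reduce the statement exactly to this equality and then invoke Lemma~\ref{lem:blackrightleft}, equation~\eqref{eq:adjunction}.

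The key steps, in order, are as follows.

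First, check that $\cont^\uparrow$ and $\cont^\downarrow$ land in the right places. $\phi\blacktriangleright\cont:1\looparrowright A$ and $\cont\blacktriangleleft\psi:X\looparrowright 1$ are relations by Remark~\ref{rmk:blacktriangle}, since residuals in $\Rel$ are again relations.

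Second, check that both maps are morphisms of quantale spaces. This is the $\Omega$-enriched functoriality, e.g.
$$\mathcal DX(\phi,\phi')\sqsubseteq \mathcal UA(\phi'\blacktriangleright\cont,\phi\blacktriangleright\cont),$$
with the variance accounting for $\mathcal UA$ being an $\Omega^o$-space, i.e.\ order-reversing at the level of underlying orders. I would derive it from the residual laws: $(\phi\blacktriangleright\phi')\bullet(\phi'\blacktriangleright\cont)\sqsubseteq\phi\blacktriangleright\cont$, the lifted version of $(r\rhd s)\cdot(s\rhd t)\sqsubseteq r\rhd t$ via Remark~\ref{rmk:blacktriangle}, then transpose using $-\bullet S\dashv -\blacktriangleleft S$.

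Third, establish the hom-equality, which is literally $\mathcal UA(\cont^\uparrow\phi,\psi)=(\phi\blacktriangleright\cont)\blacktriangleleft\psi=\phi\blacktriangleright(\cont\blacktriangleleft\psi)=\mathcal DX(\phi,\cont^\downarrow\psi)$ by \eqref{eq:adjunction}. Equivalently, it is Lemma~\ref{lem:lim-colim}, since $\cont^\uparrow\phi={\rm colim}_\phi\cont$ and $\cont^\downarrow\psi={\rm lim}_\psi\cont$.

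Finally, derive the underlying-order statement $e\sqsubseteq\psi\blacktriangleleft\cont^\uparrow\phi\iff e\sqsubseteq\cont^\downarrow\psi\blacktriangleright\ldots$ by taking $e\sqsubseteq$ on both sides. If one prefers to read the adjunction as the pointwise Galois condition $\psi\sqsubseteq\phi\blacktriangleright\cont\iff\phi\sqsubseteq\cont\blacktriangleleft\psi$, the direct route is the chain
$$\psi\sqsubseteq\phi\blacktriangleright\cont\iff\phi\bullet\psi\sqsubseteq\cont\iff\phi\sqsubseteq\cont\blacktriangleleft\psi$$
from Remark~\ref{rmk:blacktriangle}.

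The main obstacle is not computational but bookkeeping of variance. Because $\mathcal UA$ is an $\Omega^o$-space whose underlying order is reversed relative to the pointwise order, one must be careful about which side is the left adjoint and in which order the hom arguments appear. I would resolve this first by fixing the convention that the adjunction means the displayed hom-equality, with $\mathcal UA$'s hom written $\psi\blacktriangleleft\psi'$. I would then confirm on the case $\Omega=2$ that it reduces to the classical polarity Galois connection.
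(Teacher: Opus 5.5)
Your core step is exactly the paper's proof. The paper also reads the adjunction as the hom-equality $\mathcal UA(\phi\blacktriangleright\cont,\psi)=\mathcal DX(\phi,\cont\blacktriangleleft\psi)$, unfolds both sides to $(\phi\blacktriangleright\cont)\blacktriangleleft\psi$ and $\phi\blacktriangleright(\cont\blacktriangleleft\psi)$, and cites \eqref{eq:adjunction}. That alone suffices.

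The extra functoriality check in your second step, however, has the variance wrong. The hom $\mathcal UA(\psi,\psi')=\psi\blacktriangleleft\psi'$ already makes the underlying order of $\mathcal UA$ the reverse of the pointwise order. So $\cont^\uparrow$ is order-\emph{preserving} between underlying orders, and no additional swap of arguments is wanted.

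As written, $\mathcal DX(\phi,\phi')\sqsubseteq\mathcal UA(\phi'\blacktriangleright\cont,\phi\blacktriangleright\cont)$ is false. Take $\Omega=2$, $X=\{x\}$, $A=\{a\}$, $\cont(x,a)=0$, $\phi(x)=0$, $\phi'(x)=1$. The left side is $0\rhd 1=1$. Since $\phi'\blacktriangleright\cont$ and $\phi\blacktriangleright\cont$ take the values $0$ and $1$ at $a$, the right side is $0\lhd 1=0$.

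The derivation you sketch actually yields the correct statement. Transposing $(\phi\blacktriangleright\phi')\bullet(\phi'\blacktriangleright\cont)\sqsubseteq\phi\blacktriangleright\cont$ along $-\bullet(\phi'\blacktriangleright\cont)\dashv-\blacktriangleleft(\phi'\blacktriangleright\cont)$ gives $\mathcal DX(\phi,\phi')\sqsubseteq\mathcal UA(\phi\blacktriangleright\cont,\phi'\blacktriangleright\cont)$.

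The same slip recurs in your last step. There $e\sqsubseteq\psi\blacktriangleleft\cont^\uparrow\phi$ should be $e\sqsubseteq(\phi\blacktriangleright\cont)\blacktriangleleft\psi$, i.e.\ $\cont^\uparrow\phi\le\psi$ in $\mathcal UA$. This unfolds to $\psi\sqsubseteq\phi\blacktriangleright\cont$ pointwise, as in your final chain.

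Your second step can simply be dropped, since functoriality follows from the hom-equality itself:
$\mathcal DX(\phi,\phi')\sqsubseteq\mathcal DX(\phi,\phi')\cdot\mathcal DX(\phi',\cont^\downarrow\cont^\uparrow\phi')\sqsubseteq\mathcal DX(\phi,\cont^\downarrow\cont^\uparrow\phi')=\mathcal UA(\cont^\uparrow\phi,\cont^\uparrow\phi')$.
The first inequality uses $e\sqsubseteq\mathcal UA(\cont^\uparrow\phi',\cont^\uparrow\phi')=\mathcal DX(\phi',\cont^\downarrow\cont^\uparrow\phi')$.
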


\begin{proof}
    We have to show $\mathcal UA(\phi\blacktriangleright \cont,\psi)=\mathcal DX(\phi,\cont\blacktriangleleft\psi)$, which is equivalent to 
    $$
    (\phi\blacktriangleright \cont)\blacktriangleleft \psi = \phi\blacktriangleright (\cont \blacktriangleleft \psi).
    $$
    and was proved in \eqref{eq:adjunction}.
\end{proof}
\begin{remark}
Since the MacNeille adjunction lives in an order-enriched category, $\Spa(\Omega)$, the induced monad and comonad are \href{https://ncatlab.org/nlab/show/idempotent+monad#definition}{idempotent}. This in turn implies that the monad $\cont^\downarrow \cont^\uparrow$ is a closure operator on $\mathcal DX$ and the comonad $\cont^\uparrow \cont^\downarrow$  on $\mathcal UA$ is an interior operator. Since $\mathcal UA=\mathcal (DA^o)^o$, one can think of $\cont^\uparrow$ and $\cont^\downarrow$ also as contravariant functors and of both  $\cont^\downarrow \cont^\uparrow$ and  $\cont^\uparrow \cont^\downarrow$ as monads (closure operators wrt the ``inclusion'' order). 
\end{remark}

\subsection{Idempotence}\label{sec:idempotence} That the monads induced by the adjunction are \href{https://ncatlab.org/nlab/show/idempotent+monad#definition}{idempotent} implies
\begin{gather*}
\cont^\uparrow \cont^\downarrow \cont^\uparrow = \cont^\uparrow\\
\cont^\downarrow \cont^\uparrow \cont^\downarrow = \cont^\downarrow\\
\phi=\cont\blacktriangleleft \psi \ \Leftrightarrow \ \phi\blacktriangleright \cont = \psi
\end{gather*}

In particular, the following two equations will be useful.

\begin{lemma}
\begin{gather*}(\cont\blacktriangleleft \cont(x,-)) \blacktriangleright \cont \ =\ \cont(x,-)\\
\cont\blacktriangleleft (\cont(-,a) \blacktriangleright \cont)\ =\ \cont(-,a)
\end{gather*}
\end{lemma}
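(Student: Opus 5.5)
The plan is to obtain both equations from the idempotence law $\cont^\uparrow\cont^\downarrow\cont^\uparrow=\cont^\uparrow$ (and its dual) of Section~\ref{sec:idempotence}. The only work is to recognize $\cont(x,-)$ as an image under $\cont^\uparrow$ and $\cont(-,a)$ as an image under $\cont^\downarrow$, namely of representables.

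\emph{Step 1.} Let $y_x=X(-,x)\in\mathcal DX$ be the Yoneda image of $x$. Using the identity law \eqref{eq:identity}, I will check that $y_x\blacktriangleright\cont=\cont(x,-)$. Concretely,
$$(y_x\blacktriangleright \cont)(a)=\bigsqcap_{x'}X(x',x)\rhd\cont(x',a).$$
Taking $x'=x$ gives the bound $\sqsubseteq e\rhd\cont(x,a)=\cont(x,a)$, using $e\sqsubseteq X(x,x)$ and antitonicity of $-\rhd s$ in its first argument. Conversely, $\cont(x,a)\sqsubseteq X(x',x)\rhd\cont(x',a)$ holds for every $x'$, since it is equivalent to $X(x',x)\cdot\cont(x,a)\sqsubseteq\cont(x',a)$, which is the left-module condition on $\cont$. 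So $\cont(x,-)=\cont^\uparrow(y_x)$; this is the Yoneda lemma.

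\emph{Step 2.} Dually, with $z_a=A(a,-)\in\mathcal UA$, we have
$$(\cont\blacktriangleleft z_a)(x)=\bigsqcap_{a'}\cont(x,a')\lhd A(a,a')=\cont(x,a),$$
using the right-module condition $\cont(x,a)\cdot A(a,a')\sqsubseteq\cont(x,a')$ and $e\sqsubseteq A(a,a)$. So $\cont(-,a)=\cont^\downarrow(z_a)$.

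\emph{Step 3.} Now
$$(\cont\blacktriangleleft\cont(x,-))\blacktriangleright\cont=\cont^\uparrow\cont^\downarrow\cont^\uparrow(y_x)=\cont^\uparrow(y_x)=\cont(x,-),$$
and similarly
$$\cont\blacktriangleleft(\cont(-,a)\blacktriangleright\cont)=\cont^\downarrow\cont^\uparrow\cont^\downarrow(z_a)=\cont(-,a).$$
Before doing so, I need to confirm that $\cont(x,-)$ really lies in $\mathcal UA$ and $\cont(-,a)$ in $\mathcal DX$, which is immediate from the module conditions on $\cont$.

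\emph{Alternative.} If one prefers to avoid the idempotence law, the first equation also follows from two inequalities. One direction is $\cont(x,-)\sqsubseteq(\cont\blacktriangleleft\cont(x,-))\blacktriangleright\cont$, the unit of the adjunction $-\blacktriangleright\cont\dashv\cont\blacktriangleleft-$. The other comes from evaluating the meet $\bigsqcap_{x'}(\ldots)(x')\rhd\cont(x',a)$ at $x'=x$, using that $e\sqsubseteq(\cont\blacktriangleleft\cont(x,-))(x)$ since $e\sqsubseteq\cont(x,a')\lhd\cont(x,a')$ for all $a'$.

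The main obstacle is Step 1/2: confirming that the representable computation goes through with the correct residual on the correct side in the non-commutative setting. Everything else is formal.
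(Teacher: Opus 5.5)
Your proof is correct and follows the paper's intended argument: the paper states this lemma without a separate proof, as an immediate instance of the idempotence laws $\cont^\uparrow\cont^\downarrow\cont^\uparrow=\cont^\uparrow$ and $\cont^\downarrow\cont^\uparrow\cont^\downarrow=\cont^\downarrow$, applied to the representables via the Yoneda identifications $X(-,x)\blacktriangleright\cont=\cont(x,-)$ and $\cont\blacktriangleleft A(a,-)=\cont(-,a)$ (Remark~\ref{rmk:yoneda}), which your Steps 1--3 verify carefully. One small terminological point about your alternative: with the paper's reverse-inclusion order on $\mathcal UA$, the inequality $\cont(x,-)\sqsubseteq(\cont\blacktriangleleft\cont(x,-))\blacktriangleright\cont$ is the counit of $\cont^\uparrow\dashv\cont^\downarrow$; it is a unit only in the inclusion-order (Galois connection) reading.
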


We make use of idempotence to define the MacNeille completion of a relation $\cont$ as the space of fixed points of the induced closure operators.

%\begin{definition}
\subsection{MacNeille Completion}\label{sec:macneille-completion}
Let $\cont:X\looparrowright A$ be a weighted relation. The MacNeille completion $\mathcal M(\cont)$ of $\cont$ has as objects pairs $\kappa=(\phi,\psi)=(\val{\kappa},\descr{\kappa})$ (often referred to as {\em concepts}) such that $\val{\kappa}\blacktriangleright \cont=\descr{\kappa}$ and $\val{\kappa}= \cont \blacktriangleleft\descr{\kappa}$ and homs 
\[
\mathcal M(\cont)(\kappa,\kappa')= \val{\kappa}\blacktriangleright \val{\kappa'}= \descr{\kappa}\blacktriangleleft \descr{\kappa'}.
\]
The MacNeille completion of an $\Omega$-space $C$ is the MacNeille completion of its hom $C(-,-):C\looparrowright C$. 

Since the components of a pair $(\phi,\psi)\in\mathcal M(\cont)$ determine each other, we may identify a pair by any of its two components.
%\end{definition}

\begin{remark}
Lemma~\ref{lem:adjunction} implies that the homs are well-defined:
\begin{align*}
\phi\blacktriangleright \phi' 
&= \phi\blacktriangleright(\cont\blacktriangleleft \psi')\\
&= (\phi\blacktriangleright \cont)\blacktriangleleft \psi'\\
&= \psi\blacktriangleleft\psi'
\end{align*}
\end{remark}

%\begin{remark}\label{rmk:notation} For future calculations we will need
%\begin{gather*}
%(\phi\blacktriangleright \cont) (a) 
%= (\bigsqcap_x \phi x \rhd \cont(x,-))(a)
%= \bigsqcap_x \phi x \rhd \cont(x,a)
%= \phi \blacktriangleright \cont(-,a)
%\\
%(\cont\blacktriangleleft \psi) (x) 
%= (\bigsqcap_a \cont(-,a)\lhd \psi a)(x)
%= \bigsqcap_a \cont(x,a)\lhd \psi a
%= \cont(x,-)\blacktriangleleft \psi 
%\end{gather*}
%\end{remark}

\subsection{The MacNeille Embedding}
We will see below that the MacNeille completion $\mathcal M(\cont)$ of a relation $\cont:X\looparrowright A$ is indeed a completion of the image of $X+A$ in $\mathcal M(\cont)$. To make this precise we define
\begin{align*}
    \overline{(-)} : X+A & \longrightarrow \mathcal M(\cont)\\
    \overline x  & \ = \ X(-,x)\blacktriangleright \cont  \\
    \overline a  & \ = \ \cont \blacktriangleleft A(a,-)
\end{align*}
We denote the image of $\overline{(-)}$ in $\mathcal M(\cont)$ by $\overline\cont$:
$$
X+A \longrightarrow \overline\cont \longrightarrow \mathcal M(\cont)
$$
The factorisation we have in mind here is the one that has $\overline \cont \longrightarrow \mathcal M(\cont)$ as fully faithful. Since we think of a context $\cont$ as a specification of $\overline \cont$ we sometimes call $\overline{(-)}$ the MacNeille embedding even if, in general, only $\overline \cont \longrightarrow \mathcal M(\cont)$ is fully faithful.

\begin{remark}\label{rmk:yoneda}
Note that, due to the Yoneda lemma, we have
\begin{align*}
    \overline x  & \ = \ X(-,x)\blacktriangleright \cont =\cont(x,-) \in\mathcal UA \\
    \overline a  & \ = \ \cont \blacktriangleleft A(a,-) = \cont(-,a) \in\mathcal DX
\end{align*}
\end{remark}

\begin{lemma}\label{lem:mc-distance}
\begin{align*}
\mathcal M(\cont)(\overline x,\overline a) &= \cont(x,a)\\
\mathcal M(\cont)(\overline a,\overline a') &= \cont(-,a)\blacktriangleright \cont(-,a')\\
\mathcal M(\cont)(\overline x,\overline x') &= \cont(x,-)\blacktriangleleft \cont(x',-)\\
\mathcal M(\cont)(\overline a,\overline x) &= \cont(-,a) \blacktriangleright \cont\blacktriangleleft \cont(x,-)
\end{align*}   
\end{lemma}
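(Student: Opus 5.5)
The plan is to compute each hom directly from the definition
\[\mathcal M(\cont)(\kappa,\kappa')=\val{\kappa}\blacktriangleright\val{\kappa'}=\descr{\kappa}\blacktriangleleft\descr{\kappa'},\]
choosing for each pair whichever of the two expressions is more convenient. We feed in the explicit descriptions of Remark~\ref{rmk:yoneda}: $\descr{\overline x}=\cont(x,-)$, $\val{\overline a}=\cont(-,a)$, and, by the definition of concepts, $\val{\overline x}=\cont\blacktriangleleft\cont(x,-)$ and $\descr{\overline a}=\cont(-,a)\blacktriangleright\cont$.

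Two of the equations are then immediate. For $\mathcal M(\cont)(\overline a,\overline a')$ we use the extent form, which gives $\cont(-,a)\blacktriangleright\cont(-,a')$ directly. For $\mathcal M(\cont)(\overline x,\overline x')$ we use the intent form, which gives $\cont(x,-)\blacktriangleleft\cont(x',-)$.

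For $\mathcal M(\cont)(\overline a,\overline x)$ we use the extent form:
\[\val{\overline a}\blacktriangleright\val{\overline x}=\cont(-,a)\blacktriangleright(\cont\blacktriangleleft\cont(x,-)).\]
By Lemma~\ref{lem:blackrightleft} (equation~\eqref{eq:adjunction}) this expression is unambiguous without parentheses, which yields the stated formula.

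The only step needing an argument is $\mathcal M(\cont)(\overline x,\overline a)=\cont(x,a)$, and this is the one I expect to be the main obstacle. I would use the extent form
\[(\cont\blacktriangleleft\cont(x,-))\blacktriangleright\cont(-,a)\]
and re-associate it via \eqref{eq:adjunction} as $\cont\blacktriangleleft\cont(x,-)$ paired against $\cont(-,a)$. Concretely, the quantity to evaluate is $\val{\overline x}\blacktriangleright\cont(-,a)$. By the definition of $\blacktriangleright$ on presheaves, this equals $(\val{\overline x}\blacktriangleright\cont)(a)$, i.e. the $a$-component of the intent $\val{\overline x}\blacktriangleright\cont$. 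By Remark~\ref{rmk:DU} and the formula for $\cont^\uparrow$, both sides are $\bigsqcap_y\val{\overline x}(y)\rhd\cont(y,a)$, so this identification holds.

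By the idempotence lemma of Section~\ref{sec:idempotence}, $(\cont\blacktriangleleft\cont(x,-))\blacktriangleright\cont=\cont(x,-)$. Evaluating at $a$ gives $\cont(x,a)$.

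As a cross-check, I would also verify the same equation with the intent form. Here we need $\cont(x,-)\blacktriangleleft(\cont(-,a)\blacktriangleright\cont)$, which by the second idempotence equation is the $x$-component of $\cont(-,a)$, again $\cont(x,a)$. The care needed is only in matching the variance of $\blacktriangleright$ and $\blacktriangleleft$ when identifying a componentwise meet with an evaluation. Alternatively, this follows from the Yoneda lemma: $\phi\blacktriangleright\cont(-,a)=\mathcal DX(\phi,\cont(-,a))$ evaluated against representable-like data.
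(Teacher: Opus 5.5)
Your proof is correct and follows essentially the paper's route. Items 2--4 are read off from the definitions, and item 1 rests on the idempotence equation $(\cont\blacktriangleleft\cont(x,-))\blacktriangleright\cont=\cont(x,-)$. The only difference is that the paper first rewrites $\cont(-,a)$ as $\cont\blacktriangleleft A(a,-)$ by Yoneda and re-associates via \eqref{eq:adjunction}, whereas you use the pointwise identity $\phi\blacktriangleright\cont(-,a)=(\phi\blacktriangleright\cont)(a)$ directly, so your mention of \eqref{eq:adjunction} in item 1 is not actually needed.
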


\begin{proof} For item 1, we compute
\begin{align*}
\mathcal M(\cont)(\overline x,\overline a) 
&= (\cont\blacktriangleleft \cont(x,-)) \blacktriangleright \cont(-,a)
& \text{definitions}
\\&= (\cont\blacktriangleleft \cont(x,-)) \blacktriangleright (\cont\blacktriangleleft A(-,a))
& \text{Yoneda \ref{rmk:yoneda}}
\\&= ((\cont\blacktriangleleft \cont(x,-)) \blacktriangleright \cont)\blacktriangleleft A(-,a)
& \eqref{eq:adjunction}
\\&= \cont(x,-) \blacktriangleleft A(-,a)
& \text{Idempotence \ref{sec:idempotence}}
\\&= \cont(x,a)
& \text{Yoneda \ref{rmk:yoneda}}
\end{align*}
Items 2 to 4 follow immediately from the respective definitions and Remark~\ref{rmk:yoneda}.
\end{proof}

The importance of Lemma~\ref{lem:mc-distance} lies in the fact that while $\cont:X\looparrowright A$ seemingly only specifies a distance  from $x$ to $a$ for each pair $(x,a)$, it in fact determines all distances on $X+A$. %\begin{ak}$X+A$ as a category\end{ak} 
A corollary is that the restriction $|\cont|$ of $\cont$ to the discrete spaces $|X|$ and $|A|$ defines the same MacNeille completion. Indeed, due to $|\cont|(x,a)=\cont(x,a)$ we have

\begin{proposition} \ 
\begin{enumerate}
\item Let $\phi\in\mathcal D|X|$ and $\psi\in\mathcal U|A|$. Then $\phi\blacktriangleright |\cont| \in \mathcal UA$ and $\psi\blacktriangleleft |\cont| \in \mathcal DX$.
\item $\mathcal M(|\cont|)=\mathcal M(\cont)$.
\end{enumerate}
\end{proposition}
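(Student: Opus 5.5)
Item 1 is a pointwise check of the module inequalities, and item 2 follows from it by comparing fixed points of the two closure operators. Throughout, $|X|$ and $|A|$ are the discrete $\Omega$-spaces ($|X|(x,x')=e$ if $x=x'$ and $\bot$ otherwise), so $\mathcal D|X|$ and $\mathcal U|A|$ are simply all functions $X\to\Omega$ and $A\to\Omega$. We also read the second expression in item 1 as $|\cont|\blacktriangleleft\psi$.

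\emph{Item 1.} We have $(\phi\blacktriangleright|\cont|)(a)=\bigsqcap_x \phi x\rhd \cont(x,a)$, and we must show $(\phi\blacktriangleright|\cont|)(a)\cdot A(a,a')\sqsubseteq(\phi\blacktriangleright|\cont|)(a')$. By the residuation $b\sqsubseteq a\rhd c \Leftrightarrow a\cdot b\sqsubseteq c$, it suffices to show, for each $x$,
\[\phi x\cdot\big((\phi x\rhd\cont(x,a))\cdot A(a,a')\big)\sqsubseteq \cont(x,a').\]
This follows from associativity, the counit $\phi x\cdot(\phi x\rhd \cont(x,a))\sqsubseteq\cont(x,a)$, and the fact that $\cont$ is an $\Omega$-relation, so $\cont(x,a)\cdot A(a,a')\sqsubseteq\cont(x,a')$. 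The case $|\cont|\blacktriangleleft\psi\in\mathcal DX$ is dual, using $\lhd$, the counit $(s\lhd r)\cdot r\sqsubseteq s$, and $X(x',x)\cdot\cont(x,a)\sqsubseteq\cont(x',a)$. Note that the enrichments of $X$ and $A$ enter only through $\cont$ itself.

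\emph{Item 2.} The operators $|\cont|^\uparrow$ and $|\cont|^\downarrow$ are given by the same meet formulas as $\cont^\uparrow$ and $\cont^\downarrow$; they differ only in their domains, $\mathcal D|X|\supseteq\mathcal DX$ and $\mathcal U|A|\supseteq\mathcal UA$. A concept of $|\cont|$ is a pair $(\phi,\psi)$ with $\phi=|\cont|\blacktriangleleft\psi$ and $\psi=\phi\blacktriangleright|\cont|$. By item 1 both components lie in $\mathcal DX$ and $\mathcal UA$, and on those sets the formulas coincide with $\cont\blacktriangleleft-$ and $-\blacktriangleright\cont$. Hence $(\phi,\psi)$ is a concept of $\cont$. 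Conversely, every concept of $\cont$ is a concept of $|\cont|$, since $\mathcal DX\subseteq\mathcal D|X|$, $\mathcal UA\subseteq\mathcal U|A|$, and the formulas agree. The homs $\val{\kappa}\blacktriangleright\val{\kappa'}=\bigsqcap_x\val{\kappa}x\rhd\val{\kappa'}x$ are computed by the same formula in both settings (Remark~\ref{rmk:DU}), so the two $\Omega$-spaces are equal.

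The main point needing care is that the homs of $\mathcal D|X|$ and $\mathcal DX$ really agree on elements of $\mathcal DX$. Since \eqref{eq:RTS} for $X\looparrowright 1$ ranges only over points and never uses the hom of $X$, this holds, and I expect no real obstacle.
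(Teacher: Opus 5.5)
Your proof is correct and follows the paper's reasoning, which is only sketched there: the meet formulas for $-\blacktriangleright\cont$, for $\cont\blacktriangleleft-$ and for the homs of $\mathcal M(\cont)$ see $\cont$ only through its values $|\cont|(x,a)=\cont(x,a)$. Your item 1 supplies the step the paper leaves implicit, namely that the module laws of $\cont$ force the concepts of $|\cont|$ back into $\mathcal DX$ and $\mathcal UA$, and your reading of $\psi\blacktriangleleft|\cont|$ as $|\cont|\blacktriangleleft\psi$ correctly repairs the statement's typo.
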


This simplifies the specification of $\Omega$-spaces via contexts as we can now start with relations and predicates that are not necessarily monotone.  

% https://q.uiver.app/#q=WzAsNSxbMCwwLCJcXG1hdGhjYWwgRFgiXSxbMiwyLCJcXG1hdGhjYWwgTShJKSJdLFs0LDAsIlxcbWF0aGNhbCBVQSJdLFswLDQsIlgiXSxbNCw0LCJBIl0sWzMsMCwieFxcbWFwc3RvIFgoLSx4KSJdLFs0LDIsImFcXG1hcHN0byBBKGEsLSkiLDJdLFszLDQsIlxccm0gSSIsMix7InN0eWxlIjp7ImJvZHkiOnsibmFtZSI6ImRvdHRlZCJ9fX1dLFszLDEsInhcXG1hcHN0byBcXG92ZXJsaW5lIHgiLDFdLFs0LDEsImFcXG1hcHN0byBcXG92ZXJsaW5lIGEiLDFdLFswLDIsIi1cXGJsYWNrdHJpYW5nbGVyaWdodCBcXHJtIEkiLDAseyJjdXJ2ZSI6LTJ9XSxbMiwwLCJ7XFxybSBJfVxcYmxhY2t0cmlhbmdsZWxlZnQgLSIsMCx7ImN1cnZlIjotMn1dLFsxLDAsIiIsMCx7InN0eWxlIjp7InRhaWwiOnsibmFtZSI6Imhvb2siLCJzaWRlIjoiYm90dG9tIn19fV0sWzEsMiwiIiwwLHsic3R5bGUiOnsidGFpbCI6eyJuYW1lIjoiaG9vayIsInNpZGUiOiJ0b3AifX19XSxbMCwxLCIiLDAseyJjdXJ2ZSI6Miwic3R5bGUiOnsiYm9keSI6eyJuYW1lIjoiZGFzaGVkIn19fV0sWzAsMiwiXFxib3QiLDEseyJzdHlsZSI6eyJib2R5Ijp7Im5hbWUiOiJub25lIn0sImhlYWQiOnsibmFtZSI6Im5vbmUifX19XSxbMCwxLCJcXGRhc2h2IiwxLHsiY3VydmUiOjEsInN0eWxlIjp7ImJvZHkiOnsibmFtZSI6Im5vbmUifSwiaGVhZCI6eyJuYW1lIjoibm9uZSJ9fX1dLFsyLDEsIiIsMSx7ImN1cnZlIjotMiwic3R5bGUiOnsiYm9keSI6eyJuYW1lIjoiZGFzaGVkIn19fV0sWzEsMiwiXFxkYXNodiIsMSx7ImN1cnZlIjoxLCJzdHlsZSI6eyJib2R5Ijp7Im5hbWUiOiJub25lIn0sImhlYWQiOnsibmFtZSI6Im5vbmUifX19XSxbNSwzXSxbNSw0XSxbNSwxLCJbLV0iLDIseyJjdXJ2ZSI6MX1dXQ==
\[\begin{tikzcd}
	{\mathcal DX} &&&& {\mathcal UA} \\
	\\
	&& {\mathcal M(\cont)} \\
	\\
	X &&&& A
	\arrow["{x\mapsto X(-,x)}", from=5-1, to=1-1]
	\arrow["{a\mapsto A(a,-)}"', from=5-5, to=1-5]
	\arrow["{\cont}"', dotted, from=5-1, to=5-5]
	\arrow["{x\mapsto \overline x}"{description}, from=5-1, to=3-3]
	\arrow["{a\mapsto \overline a}"{description}, from=5-5, to=3-3]
	\arrow["{-\blacktriangleright \cont}", curve={height=-12pt}, from=1-1, to=1-5]
	\arrow["{\cont\blacktriangleleft -}", curve={height=-12pt}, from=1-5, to=1-1]
	\arrow[hook', from=3-3, to=1-1]
	\arrow[hook, from=3-3, to=1-5]
	\arrow[curve={height=16pt}, dashed, from=1-1, to=3-3]
	\arrow["\bot"{description}, draw=none, from=1-1, to=1-5]
	\arrow["\dashv"{description}, curve={height=8pt}, draw=none, from=1-1, to=3-3]
	\arrow[curve={height=-16pt}, dashed, from=1-5, to=3-3]
	\arrow["\dashv"{description}, curve={height=8pt}, draw=none, from=3-3, to=1-5]
\end{tikzcd}\]
Before we discuss the dashed arrows, we note that in general the Yoneda embeddings (vertical arrows) do not factor through $\mathcal M(\cont)$, unless we are in the special case of the next proposition.\footnote{But note that $X(-,x)\blacktriangleright\cont$ and $\cont\blacktriangleleft A(a,-)$ are in $\mathcal M(\cont)$.}

\begin{proposition}
If $\cont$ is the internal hom of a quantale space $C$, then $\overline\cont$ and $C$ are isomorphic quantale spaces. In particular, the Yoneda embeddings factor through $\mathcal M(\cont)$.
\end{proposition}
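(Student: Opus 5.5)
We take $X=A=C$ and $\cont=C(-,-):C\looparrowright C$, and compare $\overline{(-)}:C+C\to\mathcal M(\cont)$ with $C$. The plan is to show that for each $c\in C$ the two images $\overline c$ (from the object copy) and $\overline c$ (from the attribute copy) are the same concept, and that this concept is the pair of Yoneda images $(C(-,c),C(c,-))$. Then we compute the homs of $\overline\cont$ via Lemma~\ref{lem:mc-distance} and compare them with $C$.

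\textbf{Step 1: identify the concepts.} By Remark~\ref{rmk:yoneda}, the object $x=c$ gives the concept whose intent is $\cont(c,-)=C(c,-)\in\mathcal UC$. The attribute $a=c$ gives the concept whose extent is $\cont(-,c)=C(-,c)\in\mathcal DC$. We check that these form one concept, i.e.\ $C(-,c)\blacktriangleright C = C(c,-)$ and $C\blacktriangleleft C(c,-)=C(-,c)$. Both are instances of the Yoneda lemma in the blacktriangle form already used in Remark~\ref{rmk:yoneda}, namely $X(-,x)\blacktriangleright\cont=\cont(x,-)$ and $\cont\blacktriangleleft A(a,-)=\cont(-,a)$, with $\cont=C$. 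Hence $\overline{c}_X=\overline{c}_A=(C(-,c),C(c,-))$. This also shows that the Yoneda embeddings $c\mapsto C(-,c)$ and $c\mapsto C(c,-)$ land in the extents and intents of $\mathcal M(\cont)$, which is the ``in particular'' clause.

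\textbf{Step 2: compute the distances.} By Lemma~\ref{lem:mc-distance}, $\mathcal M(\cont)(\overline x,\overline a)=C(x,a)$ directly. The other cases reduce to this one, since every point of $\overline\cont$ is of the form $\overline c$ with both representations available. Alternatively, $\mathcal M(\cont)(\overline c,\overline{c'})=C(-,c)\blacktriangleright C(-,c')=C(c,c')$ by the Yoneda lemma, using that $\mathcal DC(C(-,c),\phi)=\phi(c)$ follows from \eqref{eq:identity}. So $c\mapsto\overline c$ is a fully faithful functor $C\to\overline\cont$, and it is surjective onto $\overline\cont$ by Step 1, since the $X$- and $A$-copies have the same image.

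\textbf{Step 3: the main obstacle.} The main obstacle is interpreting ``isomorphic.'' If $C$ is not skeletal, distinct $c,c'$ with $c\le c'\le c$ give $C(-,c)=C(-,c')$, so the map is not injective. The isomorphism should therefore be read up to the skeletal quotient, as an equivalence, or the statement should be taken under the convention that spaces are skeletal. Under that reading, a fully faithful surjective functor is an isomorphism, since full faithfulness together with skeletality gives injectivity. I would state this caveat explicitly and otherwise regard the proof as a routine Yoneda computation.
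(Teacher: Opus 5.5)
Your proof is correct and is essentially the paper's argument. The paper only computes $\mathcal M(\cont)(\overline x,\overline a)=(C(-,-)\blacktriangleleft C(x,-))\blacktriangleright C(-,a)=C(-,x)\blacktriangleright C(-,a)=C(x,a)$ by Yoneda, and its first step is exactly your Step~1 identification of the extent of $\overline x$ as $C(-,x)$. Your explicit identification of the two copies, the surjectivity onto $\overline\cont$, and the skeletality caveat fill in what the paper leaves implicit; the caveat is genuinely needed, since $\mathcal M(\cont)$, and hence $\overline\cont$, is always skeletal, so for non-skeletal $C$ one only obtains an equivalence.
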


\begin{proof}
Let $\cont$ be the internal hom of an $\Omega$-space $C$. Let $x,a\in C$. Then 
\begin{align*}
\mathcal M(\cont)(\overline x,\overline a) 
&= (C(-,-)\blacktriangleleft C(x,-)) \blacktriangleright C(-,a)
& \text{definitions}
\\
&= C(-,x) \blacktriangleright C(-,a)
& \text{Yoneda}
\\
&= C(x,a) 
& \text{Yoneda}
\end{align*}
\end{proof}

\begin{proposition}
    Let $\cont:X\looparrowright A$ be an $\Omega$-relation. $\mathcal M(\cont)$ is both a full reflective subcategory of  $\mathcal DX$ and a full coreflective subcategory of $\mathcal UA$. In particular, $\mathcal M(\cont)$ is complete and cocomplete. 
\end{proposition}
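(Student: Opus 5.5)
We identify $\mathcal M(\cont)$ with the set of fixed points $\{\phi\in\mathcal DX\mid \phi=\cont\blacktriangleleft(\phi\blacktriangleright\cont)\}$ of the closure operator $\cont^\downarrow\cont^\uparrow$ on $\mathcal DX$. Dually, we identify it with the set of fixed points $\{\psi\in\mathcal UA\mid\psi=(\cont\blacktriangleleft\psi)\blacktriangleright\cont\}$ of the interior operator $\cont^\uparrow\cont^\downarrow$ on $\mathcal UA$. By the definition of the homs of $\mathcal M(\cont)$, namely $\val{\kappa}\blacktriangleright\val{\kappa'}=\mathcal DX(\val\kappa,\val{\kappa'})$ and $\descr\kappa\blacktriangleleft\descr{\kappa'}=\mathcal UA(\descr\kappa,\descr{\kappa'})$, both projections $\kappa\mapsto\val\kappa$ and $\kappa\mapsto\descr\kappa$ are fully faithful functors. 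Since the two components of a concept determine each other, these projections are injective on objects. So $\mathcal M(\cont)$ is a full subcategory of $\mathcal DX$ and of $\mathcal UA$.

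For reflectivity we take as candidate reflector $r=\cont^\downarrow\cont^\uparrow:\mathcal DX\to\mathcal M(\cont)$, that is, $\phi\mapsto(\cont\blacktriangleleft(\phi\blacktriangleright\cont),\ \phi\blacktriangleright\cont)$. By idempotence (Section~\ref{sec:idempotence}) this pair is a concept. We must verify the enriched adjunction equation $\mathcal M(\cont)(r\phi,\kappa)=\mathcal DX(\phi,\val\kappa)$. We compute
\begin{align*}
\mathcal DX(\phi,\val\kappa)&=\phi\blacktriangleright(\cont\blacktriangleleft\descr\kappa)=(\phi\blacktriangleright\cont)\blacktriangleleft\descr\kappa\\
&=\mathcal UA(\descr{r\phi},\descr\kappa)=\mathcal M(\cont)(r\phi,\kappa),
\end{align*}
using \eqref{eq:adjunction} and the second description of the homs of $\mathcal M(\cont)$. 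Dually, the coreflector is $\psi\mapsto(\cont\blacktriangleleft\psi,\ (\cont\blacktriangleleft\psi)\blacktriangleright\cont)$, and we need $\mathcal M(\cont)(\kappa,c\psi)=\mathcal UA(\descr\kappa,\psi)$. The computation is symmetric: $\val\kappa\blacktriangleright(\cont\blacktriangleleft\psi)=(\val\kappa\blacktriangleright\cont)\blacktriangleleft\psi=\descr\kappa\blacktriangleleft\psi$. Both equations are really instances of Lemma~\ref{lem:adjunction}, restricted to fixed points. Functoriality of $r$ and of the coreflector is automatic, because they are composites of the functors in the MacNeille adjunction.

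For completeness and cocompleteness, we first show that $\mathcal DX$ is cocomplete and complete. The colimit of $G:D\to\mathcal DX$ weighted by $\phi\in\mathcal D D$ is $\phi\bullet G$ viewed as a presheaf, i.e.\ $x\mapsto\bigsqcup_d (Gd)(x)\cdot\phi d$ with the appropriate order of factors. The limit weighted by $\psi$ is $G\blacktriangleleft\psi$ computed pointwise. Each of these is verified by one residuation calculation using Lemma~\ref{lem:double-blacktriangle}, and the calculation for the limit is exactly the one carried out in Section~\ref{sec:Yoneda-preserves-limits}. Dually, $\mathcal UA$ is complete and cocomplete. Then we use the standard fact that a full reflective subcategory is closed under limits, and has colimits given by reflecting the colimit taken in the ambient category. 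Concretely, $r(\mathrm{colim}_\phi G)$ is the colimit in $\mathcal M(\cont)$, because
\begin{align*}
\mathcal M(\cont)(r(\mathrm{colim}_\phi G),\kappa)&=\mathcal DX(\mathrm{colim}_\phi G,\val\kappa)=\phi\blacktriangleright\mathcal DX(G,\val\kappa)\\
&=\phi\blacktriangleright\mathcal M(\cont)(G,\kappa).
\end{align*}
For limits we check that fixed points of $\cont^\downarrow\cont^\uparrow$ are closed under $G\blacktriangleleft\psi$. This follows because $\cont^\downarrow$ is a right adjoint and therefore preserves limits, so limits of objects in its image stay in its image. Alternatively, limits in $\mathcal M(\cont)$ can be obtained dually via the coreflection into $\mathcal UA$, which makes the argument symmetric and sidesteps the closure check.

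The main obstacle is bookkeeping rather than ideas. Because $\Omega$ is non-commutative and $\mathcal UA$ is an $\Omega^o$-space, one has to check carefully that the order of factors in the formulas for weighted (co)limits in $\mathcal DX$ and $\mathcal UA$ matches the definitions of Section~\ref{sec:weighted-colimits}. One also has to check that "coreflective in the $\Omega^o$-space $\mathcal UA$" is interpreted with the correct variance. I would settle this first by writing both hom-equations explicitly in blacktriangle notation, as above, before invoking the general categorical facts.
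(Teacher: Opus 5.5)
Your proposal is correct and is essentially the paper's argument. The paper simply observes that $\mathcal M(\cont)$ is the category of algebras for the idempotent monad $\cont^\downarrow\cont^\uparrow$ on $\mathcal DX$, and of coalgebras for the comonad $\cont^\uparrow\cont^\downarrow$ on $\mathcal UA$; your hom-calculation via \eqref{eq:adjunction} verifies that reflection and coreflection explicitly, and you also spell out the completeness and cocompleteness consequences that the paper leaves implicit. The only slip is notational: the colimit in $\mathcal DX$ should be written $G\bullet\phi$, with $G$ read as a relation $X\looparrowright D$, which is what your pointwise formula already computes.
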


\begin{proof} The argument is well-known and we only sketch it. 
    $\mathcal M(\cont)$ is the category of \href{https://ncatlab.org/nlab/show/idempotent+monad#AlgebrasForAnIdempotentMonad}{algebras for the monad} $\cont^\downarrow\cont^\uparrow$ and the category of coalgebras for the comonad $\cont^\uparrow\cont^\downarrow$, hence \href{https://ncatlab.org/nlab/show/reflective+subcategory#definition}{reflective} in $\mathcal DX$ and coreflective in $\mathcal UA$.
\end{proof}

\begin{corollary}\label{cor: limits in mc}
\begin{enumerate}
\item The embedding $\overline \cont\to\mathcal M(\cont)$ preserves limits and colimits.
\item
The colimit of $X\to\mathcal M(\cont)$ weighted by $\phi\in\mathcal DX$ is $\phi\blacktriangleright\cont$. 
\item The limit of $A\to\mathcal M(\cont)$ weighted by $\psi\in\mathcal UA$ is $\cont\blacktriangleleft\psi$.
\item $(\phi,\psi)\in\mathcal M(\cont)$ is the colimit of $X\to\mathcal M(\cont)$ weighted by $\phi$ and the limit of $A\to\mathcal M(\cont)$ weighted by $\psi$.
\end{enumerate}
\end{corollary}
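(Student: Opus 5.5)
I would prove items 2 and 3 first by direct computation with the blacktriangle calculus, then derive item 4 from them, and finally obtain item 1.

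\emph{Item 2.} The functor $X\to\mathcal M(\cont)$ is $x\mapsto\overline x$. I would work inside $\mathcal UA$, where homs are $\psi\blacktriangleleft\psi'$ and $\overline x=\cont(x,-)$ by Remark~\ref{rmk:yoneda}. Since $\mathcal M(\cont)$ is full in $\mathcal UA$, it suffices to show that $\kappa=(\cont\blacktriangleleft(\phi\blacktriangleright\cont),\ \phi\blacktriangleright\cont)$ lies in $\mathcal M(\cont)$ and satisfies
$$\mathcal M(\cont)(\kappa,\kappa')=\phi\blacktriangleright\mathcal M(\cont)(\overline{(-)},\kappa')$$
for every concept $\kappa'=(\phi',\psi')$. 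Membership follows from idempotence (Section~\ref{sec:idempotence}), since $\cont^\uparrow\cont^\downarrow\cont^\uparrow=\cont^\uparrow$.

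For the hom, note first that $\mathcal M(\cont)(\overline x,\kappa')=\cont(x,-)\blacktriangleleft\psi'=(\cont\blacktriangleleft\psi')(x)=\phi'(x)$. The right-hand side is therefore $\phi\blacktriangleright\phi'$. The left-hand side is $(\phi\blacktriangleright\cont)\blacktriangleleft\psi'$, which equals $\phi\blacktriangleright(\cont\blacktriangleleft\psi')=\phi\blacktriangleright\phi'$ by \eqref{eq:adjunction}. The two sides agree.

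\emph{Item 3.} This is dual. Compute in $\mathcal DX$ with $\overline a=\cont(-,a)$, giving $\mathcal M(\cont)(\kappa',\overline a)=\phi'\blacktriangleright\cont(-,a)=\psi'(a)$. Then
$$\mathcal M(\cont)(\kappa',(\cont\blacktriangleleft\psi,\dots))=\phi'\blacktriangleright(\cont\blacktriangleleft\psi)=(\phi'\blacktriangleright\cont)\blacktriangleleft\psi=\psi'\blacktriangleleft\psi,$$
which is exactly the defining equation of the weighted limit.

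\emph{Item 4.} For a concept $(\phi,\psi)$ we have $\phi\blacktriangleright\cont=\psi$ and $\cont\blacktriangleleft\psi=\phi$. The colimit from item 2 is therefore the concept $(\phi,\psi)$ itself, and likewise the limit from item 3.

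\emph{Item 1.} This is the step I expect to be the main obstacle, because $\overline\cont$ need not be complete, so "preserves" must be read as: any (co)limit existing in $\overline\cont$ is also a (co)limit in $\mathcal M(\cont)$. I would prove it by a density argument. Items 2 and 3 show that every object of $\mathcal M(\cont)$ is a colimit of objects $\overline x$ and a limit of objects $\overline a$.

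Suppose $c={\rm colim}_\rho G$ exists in $\overline\cont$. For any $\kappa'$, write $\kappa'$ as the limit of the $\overline a$ weighted by $\psi'$. Then
$$\mathcal M(\cont)(c,\kappa')=\mathcal M(\cont)(c,\overline{(-)})\blacktriangleleft\psi'=(\rho\blacktriangleright\overline\cont(G,\overline{(-)}))\blacktriangleleft\psi'.$$
Here the first equality uses the limit property in $\mathcal M(\cont)$ and the second uses the colimit property in $\overline\cont$, since $\overline a\in\overline\cont$. Applying \eqref{eq:adjunction} gives $\rho\blacktriangleright\mathcal M(\cont)(G,\kappa')$, again using the limit property for each $Gd$. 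This is exactly the colimit condition in $\mathcal M(\cont)$. Limits are handled dually using item 2.

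Throughout, fullness of $\overline\cont\to\mathcal M(\cont)$ is what lets me move homs between the two categories.
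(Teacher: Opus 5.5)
Your proof is correct. It takes a different route from the paper: you reverse the order of the items and replace the paper's structural arguments with explicit calculation.

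\textbf{The paper's route.} The paper proves item 1 first and independently of the others. It notes that $\kappa\mapsto\val{\kappa}$ and $\kappa\mapsto\descr{\kappa}$ embed $\mathcal M(\cont)$ fully faithfully into $\mathcal DX$ and $\mathcal UA$. Their restrictions to $\overline\cont$ are (co-)Yoneda-type functors, which preserve limits (resp.\ colimits), and fully faithful functors reflect these. Item 2 is obtained by computing the colimit in $\mathcal DX$ and applying the reflection $\cont^\downarrow\cont^\uparrow$. Items 3 and 4 are left as variations.

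\textbf{Your route.} You check the universal properties in items 2 and 3 directly from the MacNeille Yoneda lemma and \eqref{eq:adjunction}, and read off item 4 from the fixed-point equations. You then derive item 1 by a density argument: every concept is a weighted limit of the $\overline a$ and a weighted colimit of the $\overline x$, and all of these lie in $\overline\cont$.

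\textbf{Comparison.} The core of item 1 is the same in both proofs, since the codensity of the $\overline a$ is precisely the full faithfulness of $\kappa\mapsto\descr{\kappa}$. Your route gives a self-contained, purely equational argument. It also makes explicit two points the paper's sketch leaves to the reader:
\begin{itemize}
\item reflecting the $\mathcal DX$-colimit really yields the concept with intent $\phi\blacktriangleright\cont$;
\item the relevant ``Yoneda embedding'' $\overline\cont\to\mathcal DX$ is the restricted functor $\kappa\mapsto\mathcal M(\cont)(\overline{(-)},\kappa)$, which preserves limits because each of its components $\overline\cont(\overline x,-)$ is representable on $\overline\cont$.
\end{itemize}
The paper's route is shorter and reuses general facts about reflective subcategories and Yoneda, at the cost of leaving these verifications implicit.
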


\begin{proof} We sketch the proof of items 1 and 2. 3 and 4 are variations of 2.
\begin{enumerate}
\item The Yoneda embedding $\overline\cont\to\mathcal DX$ factors through the fully faithful $\mathcal M(\cont)\to\mathcal DX$. Since the Yoneda embedding preserves limits, so does $\overline\cont\to\mathcal M(\cont)$. Dually, $\overline\cont\to\mathcal M(\cont)$ preserves colimits since the co-Yoneda embedding $\overline\cont\to\mathcal UA$ preserves colimits.
\item The colimit in $\mathcal M(\cont)$ is computed as the colimit in $\mathcal DX$ and then reflected into $\mathcal M(\cont)$ using the closure operator induced by the adjunction.
\end{enumerate}
\end{proof}

\subsection{The MacNeille Yoneda Lemma}\label{lem: mcyoneda}
\begin{gather*}
\mathcal M(\cont)(\overline x, \kappa) = \val{\kappa}(x)\\
\mathcal M(\cont)(\kappa, \overline a) = \descr{\kappa}(a)
%\mathcal M(\cont)(\overline x, (\phi,\psi)) = \phi(x)\\
%\mathcal M(\cont)(\overline a, (\phi,\psi)) = \psi(a)
\end{gather*}
\begin{proof} We prove the first statement, the second is dual.
\begin{align*}\mathcal M(\cont)(\overline x, \kappa) 
&=  \cont(x,-)\blacktriangleleft \descr{\kappa} 
&\text{ definitions \ref{sec:macneille-completion}, \ref{rmk:yoneda}}
\\&=  (X(-,x)\blacktriangleright \cont)\blacktriangleleft \descr{\kappa}  
& \text{Yoneda~\ref{rmk:yoneda}}
\\&=  X(-,x)\blacktriangleright (\cont \blacktriangleleft \descr{\kappa}  )
& \eqref{eq:adjunction}
\\&=  X(-,x)\blacktriangleright \val{\kappa} 
&\text{ definition \ref{sec:macneille-completion}}
\\&=   \val{\kappa}(x)
& \text{Yoneda}
\end{align*}
\end{proof}

\subsection{Algebraic MacNeille completion}

So far we have explicitly constructed the MacNeille completion as a certain $\Omega$-space. In the context of relations, the MacNeille completion also has an algebraic formulation, as the unique $\Delta_0$-completion of a relation. In this section we present an analoguous formulation in the quantale-enriched case.

\begin{definition}[Algebraic MacNeille completion]
    Let $\cont:X\looparrowright A$ be a context, and let $C$ be a complete and co-complete $\Omega$ space. Then $C$ is a MacNeille completion of $\cont$ if there exist functors $l:X\to C$ and $r:A\to C$ such that:
    \begin{enumerate}
        \item $C(lx,ra)=\cont(x,a)$, for every $x\in X$ and $a\in A$;
        \item for every $c\in C$, there exist $\varphi\in \mathcal DX$ and $\psi\in\mathcal UA$, such that ${\rm colim}_\varphi l=c={\rm lim}_\psi r $.
    \end{enumerate}
\end{definition}

\begin{lemma}
    Let $\cont:X\looparrowright A$ be a context, and $C$ a MacNeille completion of $\cont$. Then, if $\varphi\in\mathcal DX$, $\psi\in\mathcal UA$, then ${\rm colim }_\varphi l\equiv {\rm colim }_{\cont^{\downarrow}\cont^{\uparrow}\varphi} l$ and ${\rm lim }_\psi r\equiv {\rm lim }_{\cont^{\uparrow}\cont^{\downarrow}\psi} r$.
\end{lemma}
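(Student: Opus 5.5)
We prove the first equivalence by showing that the two colimits have the same hom into every object of $C$; the second statement is dual. Here $\equiv$ means isomorphic, i.e.\ equal up to the underlying preorder. By the definition of weighted colimits, for every $c\in C$
$$C({\rm colim}_\varphi l,c)=\varphi\blacktriangleright C(l,c),\qquad C({\rm colim}_{\cont^\downarrow\cont^\uparrow\varphi} l,c)=(\cont\blacktriangleleft(\varphi\blacktriangleright\cont))\blacktriangleright C(l,c),$$
where $C(l,c)=C(l-,c)\in\mathcal DX$. By Yoneda for $C$, it then suffices to prove
$$\varphi\blacktriangleright C(l,c)=(\cont\blacktriangleleft(\varphi\blacktriangleright\cont))\blacktriangleright C(l,c)\qquad\text{for all } c\in C.$$

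The key step is to show that each presheaf $C(l,c)$ is a fixed point of the closure $\cont^\downarrow\cont^\uparrow$. By condition 2 of the algebraic MacNeille completion, write $c={\rm lim}_\psi r$ for some $\psi\in\mathcal UA$. The definition of weighted limit gives
$$C(lx,c)=C(lx,r)\blacktriangleleft\psi .$$
Condition 1 gives $C(lx,r-)=\cont(x,-)$, so
$$C(l,c)=\cont\blacktriangleleft\psi=\cont^\downarrow\psi .$$
Idempotence (Section~\ref{sec:idempotence}) then yields $\cont^\downarrow\cont^\uparrow C(l,c)=C(l,c)$.

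Once this is established, the equality follows from the closure-operator property. Write $\bar\varphi=\cont^\downarrow\cont^\uparrow\varphi$ and $\theta=C(l,c)=\cont\blacktriangleleft\psi$. Then
$$\bar\varphi\blacktriangleright\theta=\bar\varphi\blacktriangleright(\cont\blacktriangleleft\psi)=(\bar\varphi\blacktriangleright\cont)\blacktriangleleft\psi$$
by \eqref{eq:adjunction}. Idempotence gives $\bar\varphi\blacktriangleright\cont=\varphi\blacktriangleright\cont$. So the last expression equals
$$(\varphi\blacktriangleright\cont)\blacktriangleleft\psi=\varphi\blacktriangleright(\cont\blacktriangleleft\psi)=\varphi\blacktriangleright\theta .$$
This is the required identity.

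The dual statement goes the same way. Write $c={\rm colim}_\varphi l$, so that $C(c,r)=\varphi\blacktriangleright\cont$. Then
$$C(c,{\rm lim}_{\psi'} r)=C(c,r)\blacktriangleleft\psi'=(\varphi\blacktriangleright\cont)\blacktriangleleft\psi',$$
and by the same computation this is unchanged when $\psi'$ is replaced by $\cont^\uparrow\cont^\downarrow\psi'$.

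The main obstacle is the key step, namely recognising that $C(l,c)$ lies in the image of $\cont^\downarrow$. This is exactly where both axioms of the definition are used: condition 1 turns $C(lx,r-)$ into $\cont(x,-)$, and condition 2 supplies the representation $c={\rm lim}_\psi r$. A minor point to check is that we also need the colimit over the closed weight $\bar\varphi$ to exist. This holds because $C$ is cocomplete.
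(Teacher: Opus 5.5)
Your proof is correct and is essentially the paper's argument. You write $c={\rm lim}_\psi r$ via condition 2, replace $C(l,r)$ by $\cont$ via condition 1, and then compare the homs out of the two colimits using \eqref{eq:adjunction} together with the idempotence $\cont^\uparrow\cont^\downarrow\cont^\uparrow=\cont^\uparrow$. Isolating the intermediate fact $C(l,c)=\cont\blacktriangleleft\psi$ merely repackages the same chain of equalities.
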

\begin{proof}
    Since colimits and limits are defined by the equations presented in Section \ref{sec:weighted-colimits}, it is enough to show that for every $c\in C$ $$C({\rm colim }_\varphi l,c)=C({\rm colim }_{\cont^{\downarrow}\cont^{\uparrow}\varphi} l,c) \qquad\text{and}\qquad C(c,{\rm lim }_\psi r)=C(c,{\rm lim }_{\cont^{\uparrow}\cont^{\downarrow}\psi} r).$$
    We show only the first equation the second being dual. Since $C$ is a MacNeille completion of $\cont$, we have that $c={\rm lim}_\psi r$, for some $\psi\in\mathcal UA$. Then we have:
    \begin{align*}
        C({\rm colim }_\varphi l,{\rm lim}_\psi r) 
        &=(\varphi\blacktriangleright C(l,r))\blacktriangleleft\psi &\text{definition}\\
        &=(\varphi\blacktriangleright\cont)\blacktriangleleft\psi & C(lx,ra)=\cont(x,a)\\
        &=\cont^{\uparrow}\varphi\blacktriangleleft\psi & \text{definition}\\
        &=\cont^{\uparrow}\cont^{\downarrow}\cont^{\uparrow}\varphi\blacktriangleleft\psi & \text{Section \ref{sec:idempotence}}\\
        &=(\cont^{\downarrow}\cont^{\uparrow}\varphi\blacktriangleright\cont)\blacktriangleleft\psi & \text{definition}\\
        &=(\cont^{\downarrow}\cont^{\uparrow}\varphi\blacktriangleright C(l,r))\blacktriangleleft\psi &C(lx,ra)=\cont(x,a)\\
        &= C({\rm colim }_{\cont^{\downarrow}\cont^{\uparrow}\varphi} l,{\rm lim}_{\psi}r) &\text{definition.}
    \end{align*}
\end{proof}

\begin{lemma}
    Let $\cont:X\looparrowright A$ be a context, and let $C$ be a MacNeille completion of $\cont$. Then ${\rm colim}_{\varphi}l\equiv {\rm lim}_\psi r$ for every $(\varphi,\psi)\in\mathcal M(\cont)$.
\end{lemma}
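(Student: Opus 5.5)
To show ${\rm colim}_\varphi l\equiv {\rm lim}_\psi r$ for a concept $(\varphi,\psi)\in\mathcal M(\cont)$, I would follow the preceding lemma and compare homs out of both objects into an arbitrary $c\in C$. Since $\equiv$ means isomorphism in the underlying order, it suffices to show $e\sqsubseteq C({\rm colim}_\varphi l,{\rm lim}_\psi r)$ and $e\sqsubseteq C({\rm lim}_\psi r,{\rm colim}_\varphi l)$. A cleaner route is the Yoneda-style criterion: show $C({\rm colim}_\varphi l,c)=C({\rm lim}_\psi r,c)$ for all $c$. Because $C$ is a MacNeille completion, every $c$ has the form $c={\rm lim}_{\psi'}r$ for some $\psi'\in\mathcal UA$.

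\textbf{Left-hand side.} By the computation in the proof of the previous lemma,
$$C({\rm colim}_\varphi l,{\rm lim}_{\psi'}r)=(\varphi\blacktriangleright\cont)\blacktriangleleft\psi'=\psi\blacktriangleleft\psi'.$$
The last step uses $\varphi\blacktriangleright\cont=\psi$, which holds because $(\varphi,\psi)$ is a concept.

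\textbf{Right-hand side.} I need $C({\rm lim}_\psi r,{\rm lim}_{\psi'}r)$. The definition of limit gives
$$C({\rm lim}_\psi r,{\rm lim}_{\psi'}r)=C({\rm lim}_\psi r,r)\blacktriangleleft\psi',$$
so everything reduces to computing the copresheaf $a\mapsto C({\rm lim}_\psi r,ra)$. This is the main obstacle: the definition of limit controls homs \emph{into} ${\rm lim}_\psi r$, not out of it. To get around this, I would use the other presentation of the object. Write ${\rm lim}_\psi r={\rm colim}_{\varphi''}l$ for some $\varphi''$, which exists by condition 2. Then
$$C({\rm lim}_\psi r,ra)=\varphi''\blacktriangleright\cont(-,a).$$
Next I would identify $\varphi''$. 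For every $x$,
$$C(lx,{\rm lim}_\psi r)=C(lx,r)\blacktriangleleft\psi=(\cont\blacktriangleleft\psi)(x)=\varphi(x),$$
again using that $(\varphi,\psi)$ is a concept. On the other hand, $C(lx,{\rm colim}_{\varphi''}l)\sqsupseteq\varphi''(x)$ by the colimit counit, and this gives $\varphi''\sqsubseteq\varphi$. For the reverse direction, I would use the preceding lemma to replace $\varphi''$ by its closure $\cont^\downarrow\cont^\uparrow\varphi''$. Then I would show $\varphi''\blacktriangleright\cont=\psi$ directly:
$$(\varphi''\blacktriangleright\cont)(a)=C({\rm colim}_{\varphi''}l,ra)=C({\rm lim}_\psi r,ra).$$
Both sides of this have to be matched, and it seems circular.

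\textbf{A more symmetric fallback.} Instead, I would prove the two order inequalities directly. Take $\varphi'$ with ${\rm lim}_\psi r={\rm colim}_{\varphi'}l$. Applying the previous lemma in the form above yields
$$C({\rm colim}_\varphi l,{\rm colim}_{\varphi'} l)=\varphi\blacktriangleright C(l,{\rm colim}_{\varphi'}l)=\varphi\blacktriangleright C(l,{\rm lim}_\psi r)=\varphi\blacktriangleright(\cont\blacktriangleleft\psi)=\varphi\blacktriangleright\varphi\sqsupseteq e.$$
Symmetrically, take $\psi'$ with ${\rm colim}_\varphi l={\rm lim}_{\psi'}r$. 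Then
$$C({\rm lim}_{\psi'}r,{\rm lim}_\psi r)=C({\rm lim}_{\psi'}r,r)\blacktriangleleft\psi=C({\rm colim}_\varphi l,r)\blacktriangleleft\psi=(\varphi\blacktriangleright\cont)\blacktriangleleft\psi=\psi\blacktriangleleft\psi\sqsupseteq e.$$
Together these give ${\rm colim}_\varphi l\le{\rm lim}_\psi r$ and ${\rm lim}_\psi r\le{\rm colim}_\varphi l$ in the underlying order, hence ${\rm colim}_\varphi l\equiv{\rm lim}_\psi r$. This fallback avoids the obstacle entirely: it uses only the defining equations of weighted (co)limits, condition 2 of the definition, and the concept equations $\varphi\blacktriangleright\cont=\psi$ and $\cont\blacktriangleleft\psi=\varphi$. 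I expect it to be the proof.
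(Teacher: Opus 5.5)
Your fallback does not prove what it claims. In the second computation you chose $\psi'$ with ${\rm lim}_{\psi'}r={\rm colim}_\varphi l$, so $C({\rm lim}_{\psi'}r,{\rm lim}_\psi r)$ is just $C({\rm colim}_\varphi l,{\rm lim}_\psi r)$. That is the same hom as in your first computation, where ${\rm colim}_{\varphi'}l={\rm lim}_\psi r$. Both chains therefore prove only ${\rm colim}_\varphi l\le{\rm lim}_\psi r$. The first does so as $\varphi\blacktriangleright(\cont\blacktriangleleft\psi)=\varphi\blacktriangleright\varphi$. The second does so as $(\varphi\blacktriangleright\cont)\blacktriangleleft\psi=\psi\blacktriangleleft\psi$, which is the paper's first inequality. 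Neither actually needs $\varphi'$ or $\psi'$.

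The reverse inequality ${\rm lim}_\psi r\le{\rm colim}_\varphi l$ is never established. That direction is exactly the obstacle you identified earlier: it is a hom out of a limit into a colimit, and neither defining equation computes it. Your ``symmetric'' dualization swaps both the presentation and the defining equation, so it lands on the same hom again. The claim that ``together these give'' both inequalities is therefore false.

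The missing step is a comparison of weights, and you already had it in your abandoned middle paragraph. For $\varphi'$ with ${\rm colim}_{\varphi'}l={\rm lim}_\psi r$ you correctly derived $\varphi'\sqsubseteq\varphi$. The paper gets the same fact from $e\sqsubseteq C({\rm colim}_{\varphi'}l,{\rm lim}_\psi r)=\varphi'\blacktriangleright(\cont\blacktriangleleft\psi)=\varphi'\blacktriangleright\varphi$. You do not need to identify $\varphi'$ or prove $\varphi\sqsubseteq\varphi'$. Since $\rhd$ is antitone in its first argument, $-\blacktriangleright C(l,c)$ reverses the order of weights, so
\begin{align*}
C({\rm lim}_\psi r,{\rm colim}_\varphi l)
&= C({\rm colim}_{\varphi'} l,{\rm colim}_\varphi l)
= \varphi'\blacktriangleright C(l,{\rm colim}_\varphi l)\\
&\sqsupseteq \varphi\blacktriangleright C(l,{\rm colim}_\varphi l)
= C({\rm colim}_\varphi l,{\rm colim}_\varphi l)\sqsupseteq e .
\end{align*}
This is precisely the paper's proof of the second inequality.

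Dually, you could take $\psi'$ with ${\rm lim}_{\psi'}r={\rm colim}_\varphi l$. The limit unit gives $\psi'(a)\sqsubseteq C({\rm lim}_{\psi'}r,ra)=C({\rm colim}_\varphi l,ra)=\psi(a)$, and antitonicity of $\lhd$ in its second argument gives the same conclusion. With this step added, your proof is complete and agrees with the paper's.
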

\begin{proof}
    We will show that $e\sqsubseteq C({\rm colim}_{\varphi}l,{\rm lim}_\psi r)$ and $e\sqsubseteq C({\rm lim}_\psi r,{\rm colim}_{\varphi}l)$.
 For the first inequality we have:
    \begin{align*}
        e&\sqsubseteq \psi\blacktriangleleft\psi & \\
        &=(\varphi\blacktriangleright\cont)\psi & (\varphi,\psi)\in\mathcal{M}(\cont)\\
        &=(\varphi\blacktriangleright C(l,r))\blacktriangleleft \psi & C(lx,ra)=\cont(x,a)\\
        &=C({\rm colim}_{\varphi}l,{\rm lim}_\psi r) & \text{definition.}
    \end{align*}
    For the second inequality, since $C$ is a MacNeille completion, there exists $\varphi'\in\mathcal DX$, such that ${\rm colim}_{\varphi'}l= {\rm lim}_\psi r$. Then 
    \begin{align*}
        e &\sqsubseteq C({\rm colim}_{\varphi'}l,{\rm lim}_\psi r) & \\
        & =\varphi'\blacktriangleright (C(l,r)\blacktriangleleft\psi) & \text{definition}\\
        & =\varphi'\blacktriangleright (\cont\blacktriangleleft\psi) &C(lx,ra)=\cont(x,a)\\
        &=\varphi'\blacktriangleright\varphi & (\varphi,\psi)\in\mathcal{M}(\cont).
    \end{align*}
    Since $e\sqsubseteq \varphi'\blacktriangleright\varphi$, is equivalent with $\varphi'\sqsubseteq\varphi$ we get:
    \begin{align*}
        e&\sqsubseteq C({\rm colim}_{\varphi}l,{\rm colim}_{\varphi}l) &\\
        &=\varphi\blacktriangleright C(l,{\rm colim}_{\varphi}l) & \text{definition}\\
        &\sqsubseteq\varphi'\blacktriangleright C(l,{\rm colim}_{\varphi}l) & \varphi'\sqsubseteq\varphi\\
        &=C({\rm colim}_{\varphi'}l,{\rm colim}_{\varphi}l) &\text{definition}\\
        &=C({\rm lim}_\psi r,{\rm colim}_{\varphi}l) & {\rm colim}_{\varphi'}l= {\rm lim}_\psi r.
    \end{align*}
\end{proof}

From the two lemmas above we immediately obtain the following:
\begin{corollary}
    Let $\cont:X\looparrowright A$ be a context, and let $C$ be a MacNeille completion of $\cont$. Then for every $c\in C$ there exists $(\varphi,\psi)\in\mathcal{M}(\cont)$ such that $${\rm colim}_{\varphi}l\equiv c\equiv {\rm lim}_{\psi}r.$$ 
\end{corollary}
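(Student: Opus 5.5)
Fix $c\in C$. By clause 2 of the definition of an algebraic MacNeille completion, choose $\varphi_0\in\mathcal DX$ and $\psi_0\in\mathcal UA$ with ${\rm colim}_{\varphi_0}l=c={\rm lim}_{\psi_0}r$. These weights need not form a concept, so we replace them by their closures. Set
$$\varphi:=\cont^{\downarrow}\cont^{\uparrow}\varphi_0,\qquad \psi:=\cont^{\uparrow}\varphi=\cont^{\uparrow}\cont^{\downarrow}\cont^{\uparrow}\varphi_0=\cont^{\uparrow}\varphi_0,$$
where the last equality is idempotence (Section~\ref{sec:idempotence}). Then $\varphi\blacktriangleright\cont=\psi$ holds by construction. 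Also $\cont\blacktriangleleft\psi=\cont^{\downarrow}\cont^{\uparrow}\varphi_0=\varphi$, so $(\varphi,\psi)\in\mathcal M(\cont)$.

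By the first of the two preceding lemmas, ${\rm colim}_{\varphi_0}l\equiv{\rm colim}_{\cont^{\downarrow}\cont^{\uparrow}\varphi_0}l={\rm colim}_\varphi l$, so $c\equiv{\rm colim}_\varphi l$. By the second lemma, applied to the concept $(\varphi,\psi)$, we get ${\rm colim}_\varphi l\equiv{\rm lim}_\psi r$. Chaining the two gives ${\rm colim}_\varphi l\equiv c\equiv{\rm lim}_\psi r$. Here $\equiv$ means mutual $e$-below-ness in the underlying preorder, which is transitive because $e\cdot e=e$ and composition of homs is sub-transitive.

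This argument never uses $\psi_0$. Alternatively, one could start from $\psi_0$ and take $\psi=\cont^{\uparrow}\cont^{\downarrow}\psi_0$ symmetrically, using the limit half of the first lemma.

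The only step needing care is checking that $(\varphi,\psi)$ really is a concept, i.e.\ that both defining equations hold. This is immediate from the idempotence identities $\cont^\uparrow\cont^\downarrow\cont^\uparrow=\cont^\uparrow$ and the characterisation $\phi=\cont\blacktriangleleft\psi\Leftrightarrow\phi\blacktriangleright\cont=\psi$. Everything else is a direct invocation of the two lemmas.
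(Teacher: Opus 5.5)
Your proof is correct and is exactly the argument the paper intends when it says the corollary follows immediately from the two lemmas. You close the weight $\varphi_0$ from clause~2 to the concept $(\cont^{\downarrow}\cont^{\uparrow}\varphi_0,\cont^{\uparrow}\varphi_0)$, use the first lemma for $c\equiv{\rm colim}_\varphi l$ and the second for ${\rm colim}_\varphi l\equiv{\rm lim}_\psi r$; your checks that the closed pair is a concept (via idempotence) and that $\equiv$ is transitive fill in the details the paper leaves implicit.
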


\begin{theorem}
    The MacNeille completion of a context $\cont:X\looparrowright A$ is unique up to isomorphism.
\end{theorem}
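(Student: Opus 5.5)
Given an arbitrary MacNeille completion $(C,l,r)$ of $\cont$, I will show that it is isomorphic to the concrete completion $\mathcal M(\cont)$ of Section~\ref{sec:macneille-completion}. Uniqueness then follows by transitivity. Throughout, ``isomorphic'' is read up to the equivalence $\equiv$ of the underlying order, i.e.\ skeletally. The concrete space $\mathcal M(\cont)$ is itself a MacNeille completion: take $l=\overline{(-)}|_X$ and $r=\overline{(-)}|_A$. Then Lemma~\ref{lem:mc-distance} gives condition 1, and Corollary~\ref{cor: limits in mc}(4) gives condition 2.

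Define $F:\mathcal M(\cont)\to C$ by $F(\varphi,\psi)={\rm colim}_\varphi l$. By the previous lemma this is $\equiv{\rm lim}_\psi r$. The core computation is that $F$ is fully faithful. For concepts $\kappa=(\varphi,\psi)$ and $\kappa'=(\varphi',\psi')$ I plan to compute
\begin{align*}
C(F\kappa,F\kappa') &= C({\rm colim}_\varphi l,{\rm lim}_{\psi'} r) = (\varphi\blacktriangleright C(l,r))\blacktriangleleft\psi'\\
&= (\varphi\blacktriangleright\cont)\blacktriangleleft\psi' = \varphi\blacktriangleright(\cont\blacktriangleleft\psi') = \varphi\blacktriangleright\varphi' = \mathcal M(\cont)(\kappa,\kappa').
\end{align*}
This uses the definitions of weighted (co)limit in Section~\ref{sec:weighted-colimits}, condition 1, equation \eqref{eq:adjunction}, and $\varphi'=\cont\blacktriangleleft\psi'$. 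The first equality is justified because $F\kappa'\equiv{\rm lim}_{\psi'}r$, and homs are invariant under $\equiv$ by the transitivity inequalities of a space. In particular, $F$ is a functor.

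Essential surjectivity is exactly the preceding corollary: every $c\in C$ satisfies $c\equiv{\rm colim}_\varphi l$ for some $(\varphi,\psi)\in\mathcal M(\cont)$. So $F$ is fully faithful and essentially surjective. I then choose an inverse $G:C\to\mathcal M(\cont)$ by sending $c$ to such a concept. This choice is unique, because $\mathcal M(\cont)$ is skeletal: its homs are those of $\mathcal DX$, and $e\sqsubseteq\varphi\blacktriangleright\varphi'$ iff $\varphi\sqsubseteq\varphi'$. Full faithfulness of $F$ makes $G$ a functor with $C(c,c')=\mathcal M(\cont)(Gc,Gc')$. Hence $F$ and $G$ are mutually inverse up to $\equiv$, and they are strictly inverse if $C$ is skeletal. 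I would also note that $F l x\equiv\overline x$ and $F^{-1}$-compatibility with $l,r$ follow from the MacNeille Yoneda Lemma, so the isomorphism commutes with the embeddings.

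The main obstacle is bookkeeping about equivalence versus equality. Since $C$ need not be skeletal, ``unique up to isomorphism'' must be read as an equivalence of $\Omega$-categories, i.e.\ an isomorphism of skeleta. I would state this reading explicitly and verify that all hom computations above are insensitive to replacing objects by $\equiv$-equivalent ones. The one algebraic step that needs care is the mixed colim/lim hom formula $C({\rm colim}_\varphi l,{\rm lim}_{\psi'}r)=(\varphi\blacktriangleright C(l,r))\blacktriangleleft\psi'$. To verify it, I would first apply the colimit definition with target $b={\rm lim}_{\psi'}r$, giving $\varphi\blacktriangleright C(l,{\rm lim}_{\psi'}r)$. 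Then I would apply the limit definition pointwise, giving $C(lx,{\rm lim}_{\psi'}r)=C(lx,r)\blacktriangleleft\psi'$. These combine as in the proofs of the two preceding lemmas.
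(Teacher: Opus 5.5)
Your proof is correct and is essentially the paper's argument. The paper's map $f:C\to D$, $c\mapsto{\rm colim}_{\val{\kappa_c}}l_D$, is exactly your composite $F_D\circ G_C$, and both proofs rest on the same computation $C({\rm colim}_\varphi l,{\rm lim}_{\psi'}r)=\varphi\blacktriangleright\cont\blacktriangleleft\psi'$, plus the preceding corollary for essential surjectivity. Factoring through $\mathcal M(\cont)$ and making the skeletality and equivalence bookkeeping explicit is a clean refinement; note one slip: you mean $F\overline{x}\equiv lx$, not $Flx\equiv\overline{x}$.
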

\begin{proof}
    Let $C$ and $D$ be two MacNeille completions of $\cont$, witnessed by $l_C,r_C$ and $l_D,r_D$ respectively. For $c\in C$ and $d\in D$, let $\kappa_c,\kappa_d\in\mathcal{M}(\cont)$, be as per the corollary above. Define $f:C\to D$ by $f(c)={\rm colim}_{\val{\kappa_c}}l_D$ and $g:D\to C$ by $g(d)={\rm colim}_{\val{\kappa_d}}l_C$. We have that 
    \begin{align*}
        C(c_1,c_2)&=C({\rm colim}_{\val{\kappa_{c_1}}}l_C,{\rm lim}_{\descr{\kappa_{c_2}}}r_C) & \\
        &=\val{\kappa_{c_1}}\blacktriangleright C(l_C,r_C)\blacktriangleleft\descr{\kappa_{c_2}} & \\
        &=\val{\kappa_{c_1}}\blacktriangleright\cont\blacktriangleleft\descr{\kappa_{c_2}} &\\
        &=\val{\kappa_{c_1}}\blacktriangleright D(l_D,r_D)\blacktriangleleft\descr{\kappa_{c_2}} &\\
        &=D({\rm colim}_{\val{\kappa_{c_1}}}l_D,{\rm lim}_{\descr{\kappa_{c_2}}}r_D) & \\
        &=D(f(c_1),f(c_2)).
    \end{align*}
Likewise we can show that $D(d_1,d_2)=C(g(d_1),g(d_2))$. It's also immediate that $f$ and $g$ are essentially surjective. This concludes the proof.
\end{proof}
%\begin{ak}
%The MacNeille completion is unique up to isomorphism ... see \href{https://hackmd.io/@alexhkurz/BknC9boqJl}{here} for notes on this. 

%\end{ak}

\section{Canonical Extension}

Informally, the canonical extension of a quantale space is the MacNeille completion of the \emph{intermediate context} or \emph{intermediate level} that has upsets or filters in its ``lower layer'', downsets or ideals in its ``upper layer'' and the incidence relation given by intersection.

\begin{definition}
The canonical extension $C^\delta$ of a quantale space $C$, parameterised by subsets $F\subseteq\mathcal UC$ and $I\subseteq\mathcal DC$ is the MacNeille completion of the relation 
$$\cont:F\times I\to \Omega$$ 
given by
\begin{align}\label{eq:cont}
\cont (f,i) = \bigsqcup_c f(c)\cdot  i(c).
\end{align}
\end{definition}

\begin{remark}
In the definition, $F$ and $I$ are mere sets. As we have seen in Lemma~\ref{lem:mc-distance}, $F$ and $I$ inherit the structure of a quantale space from the MacNeille completion of $\cont$. $\cont(f,i)$ is the colimit of $f$ weighted by $i$ and the composition of relations $f\bullet i:1\looparrowright C \looparrowright 1$, see \eqref{eq:psi-phi}:
\begin{equation}\label{eq:Ifbulleti}
\cont(f,i)={\rm colim}_i f = f\bulletop i 
\end{equation}
\end{remark}

\begin{remark}\label{rmk:can-ext-1}
We parameterized the definition of canonical extension by $F\subseteq\mathcal UC$ and $I\subseteq\mathcal DC$. In the classical case $\Omega=2$, $F$ is the set of filters $f:C\to 2$  (finite meet preserving functions) and $I$ is the set of the ideals $i:C^o\to 2$ (finite join preserving functions). %The choice of these particular $F$ and $I$ are dictated by the application of canonical extensions to proving completeness results of various modal logics, but looks rather arbitrary from a category theoretic point of view. 
\end{remark}

In the following, to not burden our language, we will speak of filters and ideals as well as of finite limits and colimits, but keep in mind that $F$ and $I$ are paremeters.  We should also note that in the quantale-enriched case finite (co)limits refer to finite \emph{weighted} (co)limits, that is, a filter $f: C\to\Omega$ preserves finite meets and powers while an ideal $i:C^o\to\Omega$ preserves finite joins and tensors. An assumption we always make is that $F$ and $I$ contain all representable (co)presheaves. 

We summarize these definitions and conventions in the following diagram.
% https://q.uiver.app/#q=WzAsNixbMCwwLCJ7XFxjYWwgRFV9QyJdLFsyLDIsIkNeXFxkZWx0YSJdLFs0LDAsIntcXGNhbCBVRH1DIl0sWzAsNCwiXFxtYXRoY2FsIFUDIl0sWzQsNCwiXFxtYXRoY2FsIERDIl0sWzIsNSwiQyJdLFszLDBdLFs0LDJdLFszLDQsIlxccm0gSSIsMix7InN0eWxlIjp7ImJvZHkiOnsibmFtZSI6ImRvdHRlZCJ9fX1dLFszLDEsImZcXG1hcHN0byBcXG92ZXJsaW5lIGYiLDFdLFs0LDEsImlcXG1hcHN0byBcXG92ZXJsaW5lIGkiLDFdLFswLDIsIi1cXGJsYWNrdHJpYW5nbGVyaWdodCBcXHJtIEkiLDAseyJjdXJ2ZSI6LTJ9XSxbMiwwLCJ7XFxybSBJfVxcYmxhY2t0cmlhbmdsZWxlZnQgLSIsMCx7ImN1cnZlIjotMn1dLFsxLDAsIiIsMCx7InN0eWxlIjp7InRhaWwiOnsibmFtZSI6Imhvb2siLCJzaWRlIjoiYm90dG9tIn19fV0sWzEsMiwiIiwwLHsic3R5bGUiOnsidGFpbCI6eyJuYW1lIjoiaG9vayIsInNpZGUiOiJ0b3AifX19XSxbMCwxLCIiLDAseyJjdXJ2ZSI6Miwic3R5bGUiOnsiYm9keSI6eyJuYW1lIjoiZGFzaGVkIn19fV0sWzAsMiwiXFxib3QiLDEseyJzdHlsZSI6eyJib2R5Ijp7Im5hbWUiOiJub25lIn0sImhlYWQiOnsibmFtZSI6Im5vbmUifX19XSxbMCwxLCJcXGRhc2h2IiwxLHsiY3VydmUiOjEsInN0eWxlIjp7ImJvZHkiOnsibmFtZSI6Im5vbmUifSwiaGVhZCI6eyJuYW1lIjoibm9uZSJ9fX1dLFsyLDEsIiIsMSx7ImN1cnZlIjotMiwic3R5bGUiOnsiYm9keSI6eyJuYW1lIjoiZGFzaGVkIn19fV0sWzEsMiwiXFxkYXNodiIsMSx7ImN1cnZlIjoxLCJzdHlsZSI6eyJib2R5Ijp7Im5hbWUiOiJub25lIn0sImhlYWQiOnsibmFtZSI6Im5vbmUifX19XSxbNSwzXSxbNSw0XSxbNSwxLCJbLV0iLDIseyJjdXJ2ZSI6MX1dXQ==
\[\begin{tikzcd}
	{{\mathcal DF}} &&&& {{\mathcal UI}} \\
	\\
	&& {C^\delta} \\
	\\
	{\mathcal UC\supseteq F} &&&& {I\subseteq \mathcal DC} \\
	&& C
	\arrow[from=5-1, to=1-1]
	\arrow[from=5-5, to=1-5]
	\arrow["{\rm I}"', dotted, from=5-1, to=5-5]
	\arrow["{f\mapsto \overline f}"{description}, from=5-1, to=3-3]
	\arrow["{i\mapsto \overline i}"{description}, from=5-5, to=3-3]
	\arrow["{-\blacktriangleright \rm I}", curve={height=-12pt}, from=1-1, to=1-5]
	\arrow["{{\rm I}\blacktriangleleft -}", curve={height=-12pt}, from=1-5, to=1-1]
	\arrow[hook', from=3-3, to=1-1]
	\arrow[hook, from=3-3, to=1-5]
	\arrow[curve={height=16pt}, dashed, from=1-1, to=3-3]
	\arrow["\bot"{description}, draw=none, from=1-1, to=1-5]
	\arrow["\dashv"{description}, curve={height=8pt}, draw=none, from=1-1, to=3-3]
	\arrow[curve={height=-16pt}, dashed, from=1-5, to=3-3]
	\arrow["\dashv"{description}, curve={height=8pt}, draw=none, from=3-3, to=1-5]
	\arrow[from=6-3, to=5-1]
	\arrow[from=6-3, to=5-5]
	\arrow["{[-]}"', curve={height=16pt}, from=6-3, to=3-3]
\end{tikzcd}\]
%Typically, $F$ and $I$ are  subsets of $\mathcal UC$ and $\mathcal DC$ containing the representable (co)presheaves. 
%The paradigmatic example is the set of all ``weighted'' filters $f$ and ``weighted'' ideals $i$.

\begin{remark}\label{rmk:yonedaI} \eqref{eq:identity} and \eqref{eq:cont} imply the following Yoneda lemma.
\begin{gather*}
\cont(C(c,-),i)=i(c), \\
\cont(f,C(-,c))=f(c).
\end{gather*}
\end{remark}

\begin{comment}
\begin{ak}
\begin{example} \label{ex:canonical-extension} \ 

Boolean algebras \dots

The canonical extension of a Boolean algebra $A$ can be described as the powerset of ultrafilters on $A$. 

(filters = compactness)

To a set of ultrafilters associate the filter of the intersections of the ultrafilters. Such a filter is a fixed point of the adjunction.  (Why?) Conversely, every filter (?) is the intersection of the ultrafilters containing it. 

If $F$ and $I$ are all upsets / downsets all (co)limits are destroyed ... virtual adjoints ... PDL and DEL ... 

If $F$ and $I$ are all (co)presheaves we get the MacNeille completion: If $f=C(c,-)$ and $i=C(-,d)$ then $\cont(f,i)=C(c,d)$

Distributive lattices \dots

Posets \dots

More examples \ldots 

If $C$ is a poset and $F$ is the set of presheaves and $I$ is the set of co-presheaves and $\cont(C(c,-),C(-,d))=C(c,d)$ then $C^\delta$ is the MacNeille completion of $C$.

Proof: $\mathcal M(\cont)\to C^\delta$ ... 

\end{example}
\end{ak}
\end{comment}
\begin{table}[h]
	\centering
	\begin{tabular}{|c|c|}
	\hline
	\textbf{Posets} & \textbf{Categories} \\
	\hline
	$c\le d$ & $C(c,d)$ \\
	\hline
	${\uparrow} c$ & $C(c,-)$ \\
	\hline
	${\downarrow} c$ & $C(-,c)$ \\
	\hline
	$f$ filter & $f:1\looparrowright C$ preserves finite limits \\
	\hline
	$i$ ideal & $i:C\looparrowright 1$ preserves finite colimits \\
	\hline
	$c \in f$, $c\in i$ & $f(c), i(c)$ \\
	\hline
	$f \cap i\not=\emptyset$ & $f\bullet i$ \\
	\hline
	closed element & ${\rm lim}_f\,[-]$ for some $f$\\
	\hline
	open element & ${\rm colim}_i\,[-]$ for some $i$\\
	\hline
	\end{tabular}
	\caption{Correspondence between poset and category notations}
	\label{tab:poset-cat}
\end{table}

\subsection{The Embedding into the Canonical Extension} The embedding $C\to C^\delta$ arises in two different ways, composing the Yoneda embedding with the MacNeille embedding, see Remark~\ref{rmk:yoneda}, as
\begin{gather*}
C
\ \longrightarrow \ \mathcal UC 
\ \longrightarrow \ {\cal DU}C
\ \longrightarrow \ {\cal UD}C 
\\
c\ \mapsto \ C(c,-)
\ \mapsto \ \mathcal UC(-,C(c,-))
\ \mapsto \ \cont(C(c,-),-)
\end{gather*}
and
\begin{gather*}
C
\ \longrightarrow \  \mathcal DC 
\ \longrightarrow \  {\cal UD}C
\ \longrightarrow \  {\cal DU}C\\
c\ \mapsto \ C(-,c)
\ \mapsto \ \mathcal DC(C(-,c),-)
\ \mapsto \ \cont(-,C(-,c))
\end{gather*}
To show that they are equal, recalling the definition of MacNeille completion \ref{sec:macneille-completion}, it suffices to show  
\begin{gather*}
\cont(C(c,-),-) \ = \ \mathcal DC(C(-,c),-)
\\
\cont (-,C(-,c)) \ = \ \mathcal UC(-,C(c,-))  
\end{gather*}
which follows from using \ref{rmk:yonedaI} on the left and Yoneda on the right of the equations. This also implies the following.f

\begin{remark}
    The ``double Yoneda" embeddings given by $c\mapsto \mathcal DC(C(-,c),-)$ and $c\mapsto \mathcal UC(-,C(c,-))$ factor through $C\to C^\delta$. Since they are the composition of a covariant and a contravariant Yoneda embedding, they do not preserve limits and do not preserve colimits.
\end{remark}

\begin{definition}
The inclusion $[-]:C\to C^\delta$ is defined by $\val{[c]}(f)=f(c)$ and $\descr{[c]}(i)=i(c)$.
\end{definition}

\begin{remark}
Given $G:X\to C$ we may write $[G]$ for $[-]\circ G$.
\end{remark}

\begin{lemma}\label{lem:embedding-preserves-limits]}
 Let $G:X\to C$, $g\in\mathcal UX$, $j\in\mathcal DX$, $\phi\in\mathcal DF$, and  $\psi\in\mathcal UI$. Assume that co-presheaves in $F$ preserve $g$-limits and that presheaves in $I$ preserve $j$-colimits. Then 
\begin{align*}
\phi\blacktriangleright [{\rm lim}_g\, G] &= \phi\blacktriangleright {\rm lim}_g\, [G] \\[0.5ex]
[{\rm colim}_j\,G]\blacktriangleleft \psi &= {\rm colim}_j\,[G]\blacktriangleleft \psi
 \end{align*}
\end{lemma}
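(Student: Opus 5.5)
The plan is to read both sides of each equation as homs in $\mathcal DF$, respectively $\mathcal UI$, and reduce them to a pointwise statement about the extents (respectively intents). Here $\phi\blacktriangleright\kappa$ for a concept $\kappa$ means $\phi\blacktriangleright\val{\kappa}=\mathcal DF(\phi,\val{\kappa})$. So the first equation follows once we show $\val{[{\rm lim}_g G]}=\val{{\rm lim}_g[G]}$ as elements of $\mathcal DF$. The second equation is the formal dual: swap $F$ with $I$, limits with colimits, $\blacktriangleright$ with $\blacktriangleleft$, and extents with intents. I will only describe the first.

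Step one computes the left side. By definition of $[-]$, $\val{[{\rm lim}_g G]}(f)=f({\rm lim}_g G)$ for $f\in F$. By hypothesis $f$ preserves $g$-limits, so this equals ${\rm lim}_g(fG)$ computed in $\Omega$. In $\Omega$, powers are given by $\lhd$ and all meets exist, so by the explicit formula for limits (the Proposition in Section~\ref{sec:weighted-colimits}) we get
\[
\val{[{\rm lim}_g G]}(f)=\bigsqcap_{x\in X} f(Gx)\lhd g(x).
\]
Here the power in $\Omega$ must be taken in the enrichment matching $\mathcal UC$, which is where the $\lhd$ comes from.

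Step two computes the right side. Write $Y:X\to\mathcal DF$ for $x\mapsto\val{[Gx]}$, so that $Y(x)(f)=f(Gx)$. By the Proposition that $\mathcal M(\cont)$ is a full reflective subcategory of $\mathcal DF$, the inclusion $C^\delta\hookrightarrow\mathcal DF$ is a right adjoint and preserves limits. Hence $\val{{\rm lim}_g[G]}={\rm lim}_g Y$, with the limit taken in $\mathcal DF$. For any $\phi\in\mathcal DF$ I then compute as in Section~\ref{sec:Yoneda-preserves-limits}:
\[
\mathcal DF(\phi,{\rm lim}_g Y)=\mathcal DF(\phi,Y)\blacktriangleleft g=(\phi\blacktriangleright Y)\blacktriangleleft g=\phi\blacktriangleright(Y\blacktriangleleft g),
\]
using the residuation law from Remark~\ref{rmk:blacktriangle}, i.e. \eqref{eq:adjunction}. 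Evaluating with \eqref{eq:RTS}, $(Y\blacktriangleleft g)(f)=\bigsqcap_x f(Gx)\lhd g(x)$. This agrees with step one, and the equation follows. One could even skip the identification ${\rm lim}_gY=Y\blacktriangleleft g$ and conclude directly from the displayed chain.

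The main obstacle I expect is bookkeeping rather than substance. First, we must check that $Y\blacktriangleleft g$ really is an $\Omega$-relation $F\looparrowright 1$, so that it lies in $\mathcal DF$; this is automatic because it equals the extent $f\mapsto f({\rm lim}_g G)$ of the concept $[{\rm lim}_g G]$. Second, the power and residual conventions ($\lhd$ versus $\rhd$, $\Omega$ versus $\Omega^o$) must match between the hypothesis ``$f$ preserves $g$-limits'' and the formula \eqref{eq:RTS}. If the direct identification of ${\rm lim}_g$ in $C^\delta$ via reflectivity turns out to be awkward, the fallback is Corollary~\ref{cor: limits in mc}, whose item 3 gives limits in $\mathcal M(\cont)$ as $\cont\blacktriangleleft(-)$ on the intent side.
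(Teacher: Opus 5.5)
Your proposal is correct and follows the paper's proof. Both arguments evaluate pointwise at each $f\in F$, use $\val{[c]}(f)=f(c)$ and the hypothesis that $f$ preserves $g$-limits, and treat the second equation as dual. Your extra step, identifying the extent of ${\rm lim}_g[G]$ with the pointwise limit $Y\blacktriangleleft g$ via reflectivity of $C^\delta$ in $\mathcal DF$ and the law $(\phi\blacktriangleright Y)\blacktriangleleft g=\phi\blacktriangleright(Y\blacktriangleleft g)$, spells out what the paper leaves implicit in its ``def of $[-]$'' step.
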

\begin{proof} The first claim is proved as follows.
\begin{align*}
\phi \blacktriangleright [{\rm lim}_g G]
&= \bigsqcap_{f\in F} \phi(f) \rhd [{\rm lim}_g G](f)
& \text{def of } \blacktriangleright
\\
&= \bigsqcap_{f\in F} \phi(f) \rhd f({\rm lim}_g G)
& \text{def of } [-]
\\
&= \bigsqcap_{f\in F} \phi(f) \rhd {\rm lim}_g (f\circ G)
& f \text{ preserves $g$-limits} 
\\
&= \bigsqcap_{f\in F} \phi(f) \rhd {\rm lim}_g [G](f)
& \text{def of } [-]
\\
&= \phi \blacktriangleright {\rm lim}_g\, [G] 
& \text{def of } \blacktriangleright
\end{align*}
The second statement is dual to the first.
\end{proof}

As a corollary of the lemma, we want to say that $[-]:C\to C^\delta$ preserves those limits that the co-presheaves in $F$ preserve and those colimits that the presheaves in $I$ preserve. To this end, we use the notion of class of weights from Kelly and Schmitt~\cite{Kelly-Schmitt}.

\begin{proposition} Let $\Phi$ and $\Psi$ be a classes of weights. If co-presheaves in $F$ preserve $\Psi$-limits, then $[-]:C\to C^\delta$ preserves $\Psi$-limits. If presheaves in $I$ preserve $\Phi$-colimits, then $[-]:C\to C^\delta$ preserves $\Phi$-colimits.
\end{proposition}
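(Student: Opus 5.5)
The plan is to reduce the proposition to Lemma~\ref{lem:embedding-preserves-limits]}, applied weight by weight. I treat only the limit statement; the colimit statement is its formal dual, obtained by exchanging $F$ with $I$, $\blacktriangleright$ with $\blacktriangleleft$, and the role of $\val{-}$ with that of $\descr{-}$.

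Fix a weight $g\in\mathcal UX$ in $\Psi$ and a diagram $G:X\to C$ such that ${\rm lim}_g\,G$ exists in $C$. By hypothesis every $f\in F$ preserves this limit. The claim is that $[{\rm lim}_g\,G]$ is the $g$-weighted limit of $[G]$ in $C^\delta$. The limit ${\rm lim}_g\,[G]$ does exist there, because $C^\delta=\mathcal M(\cont)$ is complete (it is a full reflective subcategory of $\mathcal DF$).

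By the definition of weighted limits, what we must show is
$$C^\delta(\kappa,[{\rm lim}_g\,G]) = C^\delta(\kappa,{\rm lim}_g\,[G])$$
for every concept $\kappa\in C^\delta$. By the definition of the homs in the MacNeille completion (Section~\ref{sec:macneille-completion}), the left side equals $\val{\kappa}\blacktriangleright\val{[{\rm lim}_g\,G]}$, and the right side equals $\val{\kappa}\blacktriangleright\val{{\rm lim}_g\,[G]}$.

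This is where I expect the main subtlety. The limit ${\rm lim}_g[G]$ is computed in $C^\delta$, and we need its extent to coincide with the pointwise limit in $\mathcal DF$ that appears on the right-hand side of Lemma~\ref{lem:embedding-preserves-limits]}. The route is as follows.
\begin{itemize}
\item $C^\delta\hookrightarrow\mathcal DF$ is fully faithful and is a right adjoint, since $C^\delta$ is reflective. Hence it preserves and reflects limits, and $\val{{\rm lim}_g[G]}={\rm lim}_g\val{[G]}$ computed in $\mathcal DF$.
\item Limits in $\mathcal DF$ are pointwise, exactly as in the Yoneda argument of Section~\ref{sec:Yoneda-preserves-limits}. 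So $({\rm lim}_g\val{[G]})(f)={\rm lim}_g\big([G](f)\big)={\rm lim}_g(f\circ G)$ in $\Omega$.
\item This is precisely the expression manipulated in the proof of Lemma~\ref{lem:embedding-preserves-limits]}.
\end{itemize}

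Applying that lemma with $\phi=\val{\kappa}\in\mathcal DF$ then gives the required equality of homs for every $\kappa$. The uniqueness of weighted limits, as solutions of their defining equation, yields $[{\rm lim}_g\,G]\cong{\rm lim}_g\,[G]$. This holds on the nose, since $C^\delta$ is skeletal: concepts are determined by their components.

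As a cross-check I would verify pointwise that $\val{[{\rm lim}_gG]}(f)=f({\rm lim}_gG)={\rm lim}_g(f\circ G)$, which is exactly the hypothesis that $f$ preserves $g$-limits. Since $g$ was an arbitrary weight in $\Psi$, the first statement follows, and the second follows dually.
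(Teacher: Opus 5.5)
Your proposal is correct and follows essentially the paper's proof. Both compute $C^\delta(\kappa,[{\rm lim}_g\,G])=\val{\kappa}\blacktriangleright\val{[{\rm lim}_g\,G]}$, apply Lemma~\ref{lem:embedding-preserves-limits]} with $\phi=\val{\kappa}$, and conclude from the defining equation of weighted limits. Your added step, that the extent of the $C^\delta$-limit is the pointwise limit in $\mathcal DF$ because $C^\delta$ is reflective, makes explicit what the paper's final ``def of $C^\delta$'' step leaves implicit.
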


\begin{proof} We show that $[\lim_g G]=\lim_g [G]$ for all $(g:1\looparrowright X)\in\Psi$ and $G:X\to C$.
\begin{align*}
C^\delta((\phi,\psi),[{\rm lim}_g G])
&= \phi \blacktriangleright [{\rm lim}_g G]
& \text{def of } C^\delta
\\
&= \phi \blacktriangleright {\rm lim}_g\, [G] 
& \text{Lemma \ref{lem:embedding-preserves-limits]}}
\\
&= C^\delta((\phi,\psi), {\rm lim}_g\, [G] ) 
& \text{def of } C^\delta
\end{align*}
The second statement is dual to the first.
\end{proof}

\begin{proposition}
    Let $f\in F$ and $i\in I$. Then $C^{\delta}(\kappa,{\rm lim}_f \,[-])=\descr{\kappa}\blacktriangleleft \cont(f,-)$ and  $C^{\delta}({\rm colim}_i\,[-],\kappa)=\cont(-,i)\blacktriangleright\val{\kappa} $.
\end{proposition}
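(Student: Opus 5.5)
The plan is to unfold the definition of weighted limit in $C^\delta$, compute the hom into $[c]$, and then collapse the resulting double residual using Lemma~\ref{lem:double-blacktriangle} together with \eqref{eq:Ifbulleti}. We prove the first equation; the second is its order- and 1-cell-dual and is obtained by the same computation with $\blacktriangleright$, \eqref{eq:Ifbulleti} and $\val{\kappa}$ in place of $\blacktriangleleft$ and $\descr{\kappa}$.

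\emph{Step 1: the limit exists.} $C^\delta=\mathcal M(\cont)$ is complete by the proposition that $\mathcal M(\cont)$ is complete and cocomplete. So ${\rm lim}_f\,[-]$ exists for $f\in F\subseteq\mathcal UC$, viewed as a weight on the diagram $[-]:C\to C^\delta$. By the defining equation of weighted limits in Section~\ref{sec:weighted-colimits},
$$C^\delta(\kappa,{\rm lim}_f\,[-]) \;=\; C^\delta(\kappa,[-])\blacktriangleleft f .$$

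\emph{Step 2: compute $C^\delta(\kappa,[c])$.} By the definition of homs in $\mathcal M(\cont)$ we have $C^\delta(\kappa,[c])=\descr{\kappa}\blacktriangleleft\descr{[c]}$. By the definition of $[-]$ and Remark~\ref{rmk:yonedaI},
$$\descr{[c]}(i)=i(c)=\cont(C(c,-),i).$$
Hence, as a relation $C^\delta\looparrowright C$ restricted to $\kappa$,
$$C^\delta(\kappa,[-]) \;=\; \descr{\kappa}\blacktriangleleft S,$$
where $S:C\looparrowright I$ is given by $S(c,i)=i(c)$.

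\emph{Step 3: collapse the iterated residual.} This is the main step. By Lemma~\ref{lem:double-blacktriangle},
$$(\descr{\kappa}\blacktriangleleft S)\blacktriangleleft f \;=\; \descr{\kappa}\blacktriangleleft (f\bullet S).$$
It remains to identify $f\bullet S$. For each $i\in I$,
$$(f\bullet S)(i)=\bigsqcup_c f(c)\cdot i(c)=f\bullet i=\cont(f,i)$$
by \eqref{eq:cont} and \eqref{eq:Ifbulleti}. Therefore
$$C^\delta(\kappa,{\rm lim}_f\,[-])=\descr{\kappa}\blacktriangleleft\cont(f,-),$$
as claimed.

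The only real obstacle is bookkeeping of variances in the noncommutative setting. One must check that the order of factors really is $f(c)\cdot i(c)$ and not the reverse. Pointwise, this amounts to verifying
$$(\descr{\kappa}(i)\lhd i(c))\lhd f(c)=\descr{\kappa}(i)\lhd (f(c)\cdot i(c)),$$
which follows from the adjunction $x\sqsubseteq(a\lhd b)\lhd c \Leftrightarrow x\cdot c\cdot b\sqsubseteq a$. One then uses that $\bigsqcap_c\, a\lhd r_c = a\lhd\bigsqcup_c r_c$ to turn the meet over $c$ into the join defining $\cont(f,i)$. If the abstract lemma is awkward to match to this situation, I would fall back on exactly this pointwise computation.
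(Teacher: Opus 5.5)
Your proof is correct and is essentially the paper's argument: unfold the weighted limit, write $C^\delta(\kappa,[-])=\descr{\kappa}\blacktriangleleft[-]$, apply Lemma~\ref{lem:double-blacktriangle} with $T=\descr{\kappa}$, $S=[-]$, $S'=f$, and identify $f\bullet[-]$ with $\cont(f,-)$ via \eqref{eq:Ifbulleti}. Your pointwise variance check, $(a\lhd b)\lhd c=a\lhd(c\cdot b)$, is also correct; it confirms that the factor order is $f(c)\cdot i(c)$, which matches \eqref{eq:cont}.
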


\begin{proof} 
The proof uses the right-hand side of Lemma~\ref{lem:double-blacktriangle} and instantiates $T$ with $\descr{\kappa}: 1\looparrowright I$
 and $S$ with $[-]: C\looparrowright I$
 and $S'$ with $f:1\looparrowright C$

\begin{align*}
C^{\delta}(\kappa,{\rm lim}_f [-]) &= C^{\delta}(\kappa, [-]) \blacktriangleleft f 
& \text{def of limits} 
\\
&= (\descr{\kappa} \blacktriangleleft [-]) \blacktriangleleft f & \text{def of } C^\delta\\
&= \descr{\kappa} \blacktriangleleft (f\bullet [-]) & \text{Lemma \ref{lem:double-blacktriangle}}\\
&= \descr{\kappa} \blacktriangleleft \cont(f,-). & \text{def of $\cont$ \eqref{eq:Ifbulleti}}
\end{align*}
The second equality is dual to the first.
\end{proof}

\begin{corollary}\label{cor: fi lims}
Let $\kappa_f={\rm lim}_f\, [-]$ and $\kappa_i= {\rm colim}_i[-]$ then $$\descr{\kappa_f}(i)=\val{\kappa_i}(f)=\cont(f,i).$$ In particular, $\lim_f [-]=\overline{f}$ and ${\rm colim}_i[-]=\overline{i}$.
\end{corollary}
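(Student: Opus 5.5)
The plan is to read both equalities off the preceding Proposition by specialising $\kappa$ to representables, using the MacNeille Yoneda Lemma~\ref{lem: mcyoneda}. The identifications with $\overline f$ and $\overline i$ then follow from the fact that a concept is determined by either of its components.

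For the first equality, fix $f\in F$ and put $\kappa_f={\rm lim}_f\,[-]$. By the MacNeille Yoneda Lemma~\ref{lem: mcyoneda}, applied in $C^\delta=\mathcal M(\cont)$ with $X=F$ and $A=I$, we have
$\descr{\kappa_f}(i)=C^\delta(\kappa_f,\overline i)$.
Take $\kappa=\overline i$ in the preceding Proposition, so $C^\delta(\overline i,\kappa_f)=\descr{\overline i}\blacktriangleleft\cont(f,-)$. This is the wrong direction, so instead I would compute $\descr{\kappa_f}$ directly. By the preceding Proposition, $C^\delta(\kappa,\kappa_f)=\descr{\kappa}\blacktriangleleft\cont(f,-)$ for every $\kappa$. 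Hence $\kappa_f$ and $\overline f$ have the same distances from every object, since
$$C^\delta(\kappa,\overline f)=\val{\overline f}\ \text{evaluated against}\ \kappa=\descr{\kappa}\blacktriangleleft\descr{\overline f}=\descr{\kappa}\blacktriangleleft\cont(f,-),$$
using the definition of homs in \ref{sec:macneille-completion} and $\descr{\overline f}=\cont(f,-)$ from Remark~\ref{rmk:yoneda}.

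It follows that $C^\delta(-,\kappa_f)=C^\delta(-,\overline f)$. By Yoneda, $\kappa_f\equiv\overline f$, and in fact they are equal because objects of $\mathcal M(\cont)$ are pairs of (co)presheaves and $\mathcal M(\cont)$ is skeletal. Then $\descr{\kappa_f}(i)=\descr{\overline f}(i)=\cont(f,i)$ by Remark~\ref{rmk:yoneda}. An alternative route is to plug $\kappa=\kappa_f$ into the Proposition and use $e\sqsubseteq$ in both directions, but the Yoneda argument above is cleaner.

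The second equality is dual. From $C^\delta({\rm colim}_i[-],\kappa)=\cont(-,i)\blacktriangleright\val{\kappa}=\val{\overline i}\blacktriangleright\val\kappa=C^\delta(\overline i,\kappa)$ we get $\kappa_i=\overline i$, and hence $\val{\kappa_i}(f)=\val{\overline i}(f)=\cont(f,i)$.

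The main point needing care is the step from equality of homs to equality of objects. Here I would argue directly that $\descr{\kappa_f}=\descr{\overline f}$. Taking $\kappa=\kappa_f$ gives $e\sqsubseteq\descr{\kappa_f}\blacktriangleleft\descr{\overline f}$. Taking $\kappa=\overline f$ gives $e\sqsubseteq\descr{\overline f}\blacktriangleleft\descr{\kappa_f}$. Since $\mathcal UI$ is skeletal as a space of functions into the poset $\Omega$, these two inequalities give pointwise equality.
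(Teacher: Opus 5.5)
Your proof is correct and follows the route the paper intends. The paper gives no separate proof and reads the corollary off the preceding Proposition: combined with $\descr{\overline f}=\cont(f,-)$ from Remark~\ref{rmk:yoneda}, it yields $C^\delta(-,\kappa_f)=C^\delta(-,\overline f)$, hence $\kappa_f=\overline f$ by skeletality, and dually $\kappa_i=\overline i$. Your closing argument via $e\sqsubseteq\descr{\kappa_f}\blacktriangleleft\descr{\overline f}$ and $e\sqsubseteq\descr{\overline f}\blacktriangleleft\descr{\kappa_f}$ usefully makes explicit the cancellation step the paper leaves implicit; just replace the phrase ``$\val{\overline f}$ evaluated against $\kappa$'' by the hom formula $\descr{\kappa}\blacktriangleleft\descr{\overline f}$.
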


\begin{theorem}[Compactness]\label{th:comp}
Let $f\in F$ and $i\in I$. Then $C^\delta(\lim_f [-],{\rm colim}_{i}[-])=\cont (f,i).$
\end{theorem}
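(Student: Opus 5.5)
The plan is to reduce the statement to the MacNeille Yoneda Lemma of Section~\ref{lem: mcyoneda}, using Corollary~\ref{cor: fi lims}. That corollary identifies $\lim_f[-]$ with the MacNeille image $\overline f$ of the object $f\in F$, and ${\rm colim}_i[-]$ with the image $\overline i$ of the attribute $i\in I$. Once this identification is made, the statement becomes a hom-computation between images of an object and an attribute in $\mathcal M(\cont)=C^\delta$.

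Concretely, I would first rewrite
\[
C^\delta(\lim_f[-],{\rm colim}_i[-]) = C^\delta(\overline f,\overline i)
\]
by Corollary~\ref{cor: fi lims}. Then I would apply the first item of Lemma~\ref{lem:mc-distance}, $\mathcal M(\cont)(\overline x,\overline a)=\cont(x,a)$, with $X=F$, $A=I$, $x=f$, $a=i$, to get $\cont(f,i)$ directly.

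Alternatively, to avoid relying on the ``in particular'' clause of the corollary, I would use the MacNeille Yoneda Lemma in the form $C^\delta(\kappa,\overline i)=\descr{\kappa}(i)$ with $\kappa=\kappa_f=\lim_f[-]$. This requires knowing that ${\rm colim}_i[-]=\overline i$. Then the first part of Corollary~\ref{cor: fi lims}, $\descr{\kappa_f}(i)=\cont(f,i)$, finishes the argument.

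A third route uses only the preceding Proposition: $C^\delta(\kappa,\lim_f[-])=\descr{\kappa}\blacktriangleleft \cont(f,-)$, and dually for colimits. Taking $\kappa=\lim_f[-]$ there is not directly what we want. Instead I would take $\kappa={\rm colim}_i[-]$ as the target and compute
\[
C^\delta(\lim_f[-],{\rm colim}_i[-]) = \cont(-,i)\blacktriangleright \val{\kappa_f} .
\]
Then $\val{\kappa_f}=\cont(-,i')$-type Yoneda arguments (Remark~\ref{rmk:yoneda}) reduce this to $\cont(f,i)$.

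The main obstacle is justifying $\lim_f[-]=\overline f$, i.e.\ that the concept $\kappa_f$ has intent $\cont(f,-)$, so that $\descr{\kappa_f}=\cont(f,-)=\overline f$. I would verify this via the preceding Proposition and the MacNeille Yoneda Lemma:
\[
\descr{\kappa_f}(i) = C^\delta(\kappa_f,\overline i) = C^\delta(\kappa_f,\kappa_i) .
\]
Using the Proposition with $\kappa=\kappa_f$ against the limit characterisation, together with the idempotence identity $(\cont\blacktriangleleft\cont(f,-))\blacktriangleright\cont=\cont(f,-)$ from Section~\ref{sec:idempotence}, this yields $\cont(f,i)$. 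Since the corollary is already stated, I would simply cite it.
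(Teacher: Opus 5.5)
Your first route is exactly the paper's proof: Corollary~\ref{cor: fi lims} gives $\lim_f[-]=\overline f$ and ${\rm colim}_i[-]=\overline i$, and item~1 of Lemma~\ref{lem:mc-distance} then yields $\cont(f,i)$. You should drop the third route, because it computes the hom in the wrong direction: the Proposition's formula $\cont(-,i)\blacktriangleright\val{\kappa}$ is $C^\delta({\rm colim}_i[-],\kappa)$, so taking $\kappa=\kappa_f$ gives $C^\delta(\overline i,\overline f)=\cont(-,i)\blacktriangleright\cont\blacktriangleleft\cont(f,-)$ (item~4 of Lemma~\ref{lem:mc-distance}), which is in general not $\cont(f,i)$; for $\Omega=2$ it says that the open element lies below the closed one, not that $f$ meets $i$.
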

\begin{proof}The theorem follows immediately from item 1 of  Lemma \ref{lem:mc-distance}, since by Corollary \ref{cor: fi lims} above we have that $\lim_f[-]=\overline{f}$ and ${\rm colim}_i[-]=\overline{i}$.
%For the left to right direction we have:
%       \begin{align*}
%            C^\delta(\lim_f [-],{\rm colim}_{i}[-])
%            &= \cont(f,-)https://www.overleaf.com/project/65d26ddf7d12ea30fa84645b\blacktriangleleft (\cont(-,i)\blacktriangleright \cont) &\\
%            &\leq \cont(f,i)\lhd (\cont(-,i) \blacktriangleright \cont(-,i)) & \text{definition}\\
%            &\leq \cont(f,i)\lhd e & \\
%            &=\cont(f,i).
%       \end{align*}
\end{proof}
\begin{theorem}[Density]\label{th:d1dense}
   Every $\kappa\in C^\delta$ is the colimit of a limit of $C$ and the limit of a colimit of $C$.
\end{theorem}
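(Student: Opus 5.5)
The plan is to read the density statement through Corollary~\ref{cor: limits in mc}(4), applied to the context $\cont:F\looparrowright I$ whose MacNeille completion is $C^\delta$. That corollary says every $\kappa=(\val{\kappa},\descr{\kappa})\in C^\delta$ is the colimit of the MacNeille embedding $F\to C^\delta$, $f\mapsto\overline f$, weighted by $\val{\kappa}\in\mathcal DF$. It also says $\kappa$ is the limit of $I\to C^\delta$, $i\mapsto\overline i$, weighted by $\descr{\kappa}\in\mathcal UI$. So the whole task is to identify the generators $\overline f$ and $\overline i$ as limits, respectively colimits, of $C$ inside $C^\delta$.

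That identification is exactly Corollary~\ref{cor: fi lims}: $\overline f={\rm lim}_f\,[-]$ and $\overline i={\rm colim}_i\,[-]$. Here $f\in F\subseteq\mathcal UC$ serves as a weight for a limit of the diagram $[-]:C\to C^\delta$, and $i\in I\subseteq\mathcal DC$ serves as a weight for a colimit. Substituting these into the previous paragraph gives
\[
\kappa={\rm colim}_{\val{\kappa}}\bigl({\rm lim}_{(-)}[-]\bigr),\qquad \kappa={\rm lim}_{\descr{\kappa}}\bigl({\rm colim}_{(-)}[-]\bigr).
\]
In the first, the inner expression is the functor $F\to C^\delta$, $f\mapsto{\rm lim}_f[-]$. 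In the second, it is $I\to C^\delta$, $i\mapsto{\rm colim}_i[-]$. This is the claimed description of $\kappa$ as a colimit of limits of $C$ and as a limit of colimits of $C$.

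For safety I would verify the colimit description directly rather than rely only on the sketched proof of Corollary~\ref{cor: limits in mc}. The check is, for arbitrary $\kappa'\in C^\delta$,
\[
C^\delta(\kappa,\kappa')=\val{\kappa}\blacktriangleright C^\delta(\overline{(-)},\kappa').
\]
By the MacNeille Yoneda lemma (Section~\ref{lem: mcyoneda}) we have $C^\delta(\overline f,\kappa')=\val{\kappa'}(f)$. The right-hand side therefore becomes $\val{\kappa}\blacktriangleright\val{\kappa'}$, which is the definition of the hom in $\mathcal M(\cont)$. The limit description is dual: $C^\delta(\kappa',\overline i)=\descr{\kappa'}(i)$, and the hom is $\descr{\kappa'}\blacktriangleleft\descr{\kappa}$.

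The one point needing care is functoriality. The maps $f\mapsto\overline f$ and $i\mapsto\overline i$ must be functors out of $F$ and $I$ with their inherited structure, so that the weights $\val{\kappa}\in\mathcal DF$ and $\descr{\kappa}\in\mathcal UI$ are legitimate. Since the homs on $F$ and $I$ are those induced by $\mathcal M(\cont)$ (Lemma~\ref{lem:mc-distance}), these maps are fully faithful by construction. I expect this bookkeeping to be the only mild obstacle.
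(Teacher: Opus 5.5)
Your proposal is correct and is essentially the paper's proof: it combines Corollary~\ref{cor: limits in mc}(4), which presents $\kappa$ as the colimit of $f\mapsto\overline f$ weighted by $\val{\kappa}$ and as the limit of $i\mapsto\overline i$ weighted by $\descr{\kappa}$, with Corollary~\ref{cor: fi lims}, which gives $\overline f={\rm lim}_f[-]$ and $\overline i={\rm colim}_i[-]$. Your direct check via the MacNeille Yoneda lemma is a sound addition, and your assignment of colimits to $F\subseteq\mathcal UC$ and limits to $I\subseteq\mathcal DC$ is the correct reading (the paper's one-line proof swaps these words in both places, so it still reaches the same conclusion).
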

\begin{proof}
    By Corrolary \ref{cor: limits in mc} every element of $C^\delta$ is a limit of $\mathcal{U}C$ and a colimit of $\mathcal{D}C$. At the same time, by Corollary \ref{cor: fi lims} every element of $\mathcal{U}C$ is a colimit of $C$ and every element of $\mathcal{D}C$ is a limit of $C$.
\end{proof}

%\begin{ak}
%... uniqueness ... see the \href{https://hackmd.io/@alexhkurz/Bkt9GZqJeg}{this markdown}
%\end{ak}

We now turn our attention to extending a functor $G:C\to D$ between $\Omega$-categories via the intermediate level to functors $G^\sigma,G^\pi:C^\delta\to D^\delta$ on canonical extensions.

% https://q.uiver.app/#q=WzAsMTAsWzIsMCwiQyJdLFs0LDAsIkQiXSxbNCw0LCJDXlxcZGVsdGEiXSxbNiw0LCJEXlxcZGVsdGEiXSxbMCw0LCJDXlxcZGVsdGEiXSxbMiw0LCJEXlxcZGVsdGEiXSxbMiwyLCJcXG1hdGhjYWwgREQiXSxbNCwyLCJcXG1hdGhjYWwgVUMiXSxbMCwyLCJcXG1hdGhjYWwgREMiXSxbNiwyLCJcXG1hdGhjYWwgVUQiXSxbMCwxLCJHIl0sWzMsMiwiR15yIiwwLHsiY3VydmUiOi0yfV0sWzIsMywiR15cXHNpZ21hIiwwLHsiY3VydmUiOi0yfV0sWzUsNCwiR15sIiwwLHsiY3VydmUiOi0yfV0sWzQsNSwiR15cXHBpIiwwLHsiY3VydmUiOi0yfV0sWzQsNSwiXFx0b3AiLDEseyJzdHlsZSI6eyJib2R5Ijp7Im5hbWUiOiJub25lIn0sImhlYWQiOnsibmFtZSI6Im5vbmUifX19XSxbMiwzLCJcXGJvdCIsMSx7InN0eWxlIjp7ImJvZHkiOnsibmFtZSI6Im5vbmUifSwiaGVhZCI6eyJuYW1lIjoiZW5kIn19fV0sWzYsOCwiR15yIiwwLHsiY3VydmUiOi0yfV0sWzksNywiR15sIiwwLHsiY3VydmUiOi0yfV0sWzgsNiwiR15cXHBpIiwwLHsiY3VydmUiOi0yfV0sWzcsOSwiR15cXHNpZ21hIiwwLHsiY3VydmUiOi0yfV0sWzgsNiwiXFxib3QiLDEseyJzdHlsZSI6eyJib2R5Ijp7Im5hbWUiOiJub25lIn0sImhlYWQiOnsibmFtZSI6Im5vbmUifX19XV0=
\[\begin{tikzcd}
	&& C && D \\
	\\
	{\mathcal DC} && {\mathcal DD} && {\mathcal UC} && {\mathcal UD} \\
	\\
	{C^\delta} && {D^\delta} && {C^\delta} && {D^\delta}
	\arrow["G", from=1-3, to=1-5]
	\arrow["{G^r}", curve={height=-12pt}, from=5-7, to=5-5]
	\arrow["{G^\sigma}", curve={height=-12pt}, from=5-5, to=5-7]
	\arrow["{G^l}", curve={height=-12pt}, from=5-3, to=5-1]
	\arrow["{G^\pi}", curve={height=-12pt}, from=5-1, to=5-3]
	\arrow["\top"{description}, draw=none, from=5-1, to=5-3]
	\arrow["\bot"{description}, draw=none, from=5-5, to=5-7]
	\arrow["{G_r}", curve={height=-12pt}, from=3-3, to=3-1]
	\arrow["{G_l}", curve={height=-12pt}, from=3-7, to=3-5]
	\arrow["{G_\pi}", curve={height=-12pt}, from=3-1, to=3-3]
	\arrow["{G_\sigma}", curve={height=-12pt}, from=3-5, to=3-7]
	\arrow["\bot"{description}, draw=none, from=3-1, to=3-3]
	\arrow["\top"{description}, draw=none, from=3-5, to=3-7]
\end{tikzcd}\]

\begin{remark}\label{rmk:can-ext-2}
    The $\sigma$- and $\pi$-extension of functors will play a crucial role (to be pursued in future work) in the application of canonical extenions to the logic of relational structures on quantale-enriched categories.  In these applications, there will be logical connectives stemming from the weighted (co)limits of $\Omega$ (meets, joins, tensor, power) as well as additional modal operators corresponding to operations (functors) on modal algebras. 
    
    Canonical extensions offer a modular and generic method of proving completeness of \emph{finitary} modal logics with respect to various relational semantics. The method hinges on extending functora $f:\mathbb{A}\to\mathbb{B}$ that preserve \emph{finite} finite weighted (co)limits to maps on the canonical extensions $f^\sigma,f^\pi:\mathbb{A}^\delta\to\mathbb{B}^\delta$ that preserve arbitrary (co)limits and, hence, have adjoints. 
    
    Indeed, on the syntactic side, modalities such as $\Box$ and $\lozenge$ can be seen as monotone operations that preserve finite meets and joins respectively on the Lindenbaum-Tarski algebra of the logic. On the semantic side, such modalities arise from relations on sets, and therefore preserve arbitrary meets and joins and are right and left adjoints respectively.  Then completeness of such modal logics is shown via canonicity: $\Box^\pi$ and $\lozenge^\sigma$ do have adjoints and therefore can correspond to relations on the dual space of the canonical extension. 
\end{remark}

\begin{definition}
%Let $\Phi, \Psi$ be classes of weights such that $\psi\circ G\in \Psi_D$ for every $\psi\in \Psi_C$. Then, given $f\in\mathcal UD$ and $i\in\mathcal  D D$, we define $G_{l}(f),G_{r}(i):C\to\Omega$, as $$G_{l}(f)(c)=f(G(c)) \quad\quad G_{r}(i)(c)=i(G(c)).$$
Given a functor $G:C\to D$ between $\Omega$-spaces, we define by precomposition the functors 
% https://q.uiver.app/#q=WzAsNCxbMiwwLCJcXG1hdGhjYWwgREQiXSxbNCwwLCJcXG1hdGhjYWwgVUMiXSxbMCwwLCJcXG1hdGhjYWwgREMiXSxbNiwwLCJcXG1hdGhjYWwgVUQiXSxbMCwyLCJHX3IiXSxbMywxLCJHX2wiXV0=
\[\begin{tikzcd}
	{\mathcal DC} && {\mathcal DD} && {\mathcal UC} && {\mathcal UD}
	\arrow["{G_r}", from=1-3, to=1-1]
	\arrow["{G_l}", from=1-7, to=1-5]
\end{tikzcd}\]
In detail, given $f\in\mathcal UD$ and $i\in\mathcal  D D$, we define $G_{l}(f)(c)=f(G(c))$ and $G_{r}(i)(c)=i(G(c))$.
\end{definition}

We next show that if $G$ preserves finite limits and colimits, then $G_l$ and $G_r$ restrict to filters and ideals.

\begin{proposition}
Let $G:C\to D$.
Let $\Phi, \Psi$ be classes of weights. 
Let $f\in\mathcal UD$ preserve $\Psi$-limits and let $i\in\mathcal DD$ preserve $\Phi$-colimits.
If $G$ preserves $\Psi$-limits, then so does $G_{l}(f)$ and if $G$ preserves $\Phi$-colimits, then so does $G_{r}(i)$.
\end{proposition}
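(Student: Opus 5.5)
The claim says that $G_l(f)=f\circ G$ preserves every $\Psi$-limit that exists in $C$, and dually for $G_r(i)=i\circ G$. It is the statement that a composite of limit-preserving functors preserves limits, so the plan is to make that composite precise in the present setting. First I fix what "a co-presheaf $f\in\mathcal UC$ preserves a $g$-limit" means. View $f$ as a functor $C\to\Omega$ with $\Omega(r,s)=r\rhd s$, as in Lemma~\ref{lem:embedding-preserves-limits]}. Then for $(g:1\looparrowright X)\in\Psi$ and $H:X\to C$ with $\lim_g H$ existing, the requirement is
\[
f({\rm lim}_g H)={\rm lim}_g (f\circ H).
\]
In $\Omega$, weighted limits are computed by the explicit formula of the Proposition on tensors and powers, namely ${\rm lim}_g(fH)=\bigsqcap_x fHx\lhd g x$ (using $\uparrow=\lhd$ in $\Omega$).

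Now let $H:X\to C$ and $g\in\Psi$ with $\lim_g H$ existing in $C$. Since $G$ preserves $\Psi$-limits, $G({\rm lim}_g H)$ is a $g$-limit of $G\circ H$ in $D$, i.e.
\[
D(d,G({\rm lim}_g H))=D(d,GH)\blacktriangleleft g
\]
for all $d$. Because $f$ preserves $\Psi$-limits in $D$ and $g\in\Psi$, we get
\[
f(G({\rm lim}_g H))={\rm lim}_g(f\circ G\circ H).
\]
Rewriting the left side as $G_l(f)({\rm lim}_g H)$ and the right side as ${\rm lim}_g(G_l(f)\circ H)$ gives exactly the statement that $G_l(f)$ preserves the $g$-limit. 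One also checks that $G_l(f)$ is again a co-presheaf, i.e. a relation $1\looparrowright C$: this follows from
\[
f(Gc)\cdot C(c,c')\sqsubseteq f(Gc)\cdot D(Gc,Gc')\sqsubseteq f(Gc'),
\]
using functoriality of $G$ and the co-presheaf law for $f$.

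The statement for $G_r(i)$ is dual. View $i$ as a functor $C^o\to\Omega$ that turns colimits into limits, i.e. $i({\rm colim}_j H)={\rm lim}(i\circ H)$ in the appropriate sense, so that $i({\rm colim}_j H)=\bigsqcap_x j x\rhd iHx$. Then compose with $G$'s preservation of $\Phi$-colimits in the same way, and verify the presheaf law $C(c',c)\cdot i(Gc)\sqsubseteq i(Gc')$ likewise.

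The main obstacle is purely definitional. One must make sure that "preserves $\Psi$-limits" for $f$ means preservation of all $g$-limits that exist in $D$, and not merely of limits of diagrams of a specific shape. In particular the diagram $G\circ H$ has to be admissible, which holds since the weight $g$ is unchanged and only the diagram is replaced by $G\circ H$. In the Kelly--Schmitt sense a class of weights constrains only the weight, so this works. Under that reading, the argument is a two-line composition of equalities.
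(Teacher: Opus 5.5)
Your proof is correct and follows the paper's argument. The paper just says the claim is immediate because $G_l(f)=f\circ G$ and $G_r(i)=i\circ G$ are precompositions with $G$, and your chain $G_l(f)({\rm lim}_g H)=f({\rm lim}_g\, GH)={\rm lim}_g(fGH)$ and its dual spell that out, together with correct details the paper leaves implicit (the (co)presheaf laws for $G_l(f)$, $G_r(i)$ and the explicit (co)limit formulas in $\Omega$).
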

\begin{proof}
Immediate from the definition of $G_l$ and $G_r$ as precomposition with $G$. 
\end{proof}

We now extend $G_l$ and $G_r$ from the intermediate level to canonical extensions. 
\[\begin{tikzcd}
	{C^\delta} && {D^\delta} && {C^\delta} && {D^\delta}
	\arrow["{G^r}", from=1-3, to=1-1]
	\arrow["{G^l}", from=1-7, to=1-5]
\end{tikzcd}\]
Recall that for $\kappa\in D^\delta$, we have $\val{\kappa}\in\mathcal DF_D$ and $\descr{\kappa}\in\mathcal U I_D$.

\begin{definition}
    We define $G^{l},G^{r}:D^\delta\to C^\delta$ as $$G^{l}(\kappa)={\rm colim}_{\val{\kappa}}\, \overline G_l 
    \quad\quad \quad 
    G^{r}(\kappa)={\rm lim}_{\descr{\kappa}} \,\overline G_{r}.$$
\end{definition}

\begin{remark}
To show that $G^l$ is functorial note that $\val{\kappa}\sqsubseteq \val{\kappa'}\blacktriangleleft D^\delta(\kappa,\kappa')$. Then we have:
\begin{align*}
C^\delta({\rm colim}_{\val{\kappa}}\, \overline G_l,{\rm colim}_{\val{\kappa'}}\, \overline G_l) 
& = \val{\kappa}\blacktriangleright C^{\delta}(\overline G_{l},{\rm colim}_{\val{\kappa'}}\, \overline G_l) 
& \text{def of co-limits} 
\\
&\sqsupseteq (\val{\kappa'}\blacktriangleleft D^\delta(\kappa,\kappa'))\blacktriangleright C^{\delta}(\overline G_{l},{\rm colim}_{\val{\kappa'}}\, \overline G_l) 
& \text{tonicity} 
\\     
&\sqsupseteq (\val{\kappa'}\blacktriangleleft D^\delta(\kappa,\kappa'))\blacktriangleright \val{\kappa'} & \text{tonicity}\\
&\sqsupseteq D^\delta(\kappa,\kappa') & \text{quantale identity}
\end{align*}
The functoriality of $G^{r}$ is dual.
\end{remark}

The next proposition shows that $G^{l}$ and $G^{r}$ behave like adjoints on the image of the embeddings $[-]:C\to C^\delta$ and $[-]:D\to D^\delta$.  
%(However, it is not always the case that this adjointness can be extended to the canonical extensions beyond the images.)

\begin{proposition}
    For every $c\in C_0$, $C^{\delta}(G^{l}([G(c)]),[c])\sqsupseteq e$ and $C^{\delta}([c], G^{r}([G(c)]))\sqsupseteq e$.
\end{proposition}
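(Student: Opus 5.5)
The plan is to unfold the weighted (co)limit defining $G^{l}$ and $G^{r}$, and then evaluate each resulting hom with the MacNeille Yoneda Lemma (Section~\ref{lem: mcyoneda}). Throughout I assume, as the construction of $G^l,G^r$ already requires, that $G_l$ maps $F_D$ into $F_C$ and $G_r$ maps $I_D$ into $I_C$. This holds, for instance, by the preceding proposition when $G$ preserves the relevant finite limits and colimits. Then $\overline{G_l}:F_D\to C^\delta$, $f\mapsto\overline{G_l(f)}$, and $\overline{G_r}:I_D\to C^\delta$, $i\mapsto \overline{G_r(i)}$, are well defined.

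For the first inequality, write $\phi=\val{[G(c)]}\in\mathcal DF_D$; by the definition of $[-]$ we have $\phi(f)=f(G(c))$. By the defining equation of weighted colimits (Section~\ref{sec:weighted-colimits}) and \eqref{eq:RTS},
\begin{align*}
C^\delta(G^{l}([G(c)]),[c]) &= \phi\blacktriangleright C^\delta(\overline{G_l},[c]) = \bigsqcap_{f\in F_D} f(G(c))\rhd C^\delta(\overline{G_l(f)},[c]).
\end{align*}
Next, the MacNeille Yoneda Lemma $\mathcal M(\cont)(\overline x,\kappa)=\val{\kappa}(x)$, applied with $x=G_l(f)\in F_C$ and $\kappa=[c]$, gives
\[
C^\delta(\overline{G_l(f)},[c])=\val{[c]}(G_l(f))=G_l(f)(c)=f(G(c)).
\]
Each term of the meet is therefore $f(G(c))\rhd f(G(c))$, which lies above $e$ by the quantale laws of Section~\ref{sec:quantale-laws}. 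Hence the whole meet lies above $e$.

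The second inequality is dual. Put $\psi=\descr{[G(c)]}\in\mathcal UI_D$, so that $\psi(i)=i(G(c))$. The definition of weighted limits gives
\[
C^\delta([c],G^{r}([G(c)])) = C^\delta([c],\overline{G_r})\blacktriangleleft\psi = \bigsqcap_{i\in I_D} C^\delta([c],\overline{G_r(i)})\lhd i(G(c)).
\]
The second MacNeille Yoneda equation gives
\[
C^\delta([c],\overline{G_r(i)})=\descr{[c]}(G_r(i))=i(G(c)),
\]
so every term is $i(G(c))\lhd i(G(c))\sqsupseteq e$.

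I do not expect a genuine obstacle. The only points needing care are the well-definedness of $\overline{G_l}$ and $\overline{G_r}$, that is, that $G_l$ and $G_r$ land in $F_C$ and $I_C$, and reading off $\val{[c]}$ and $\descr{[c]}$ correctly from the definition of $[-]$. Everything else is the $\blacktriangleright/\blacktriangleleft$ calculus.
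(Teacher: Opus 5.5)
Your proof is correct and follows essentially the same route as the paper. You unfold the defining equation of the weighted (co)limit, evaluate $C^\delta(\overline{G_l(f)},[c])=f(G(c))$ by the MacNeille Yoneda Lemma, and conclude from $\val{[G(c)]}\blacktriangleright\val{[G(c)]}\sqsupseteq e$ (and dually with $\descr{[G(c)]}$). The paper does this at the level of $\blacktriangleright$ rather than pointwise, and it uses implicitly the assumption you state explicitly, that $G_l$ and $G_r$ land in $F_C$ and $I_C$.
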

\begin{proof}
    We have
    \begin{align*}
        C^\delta(G^{l}([G(c)]),[c]) &= \val{[G(c)]}\blacktriangleright C^{\delta}(G_{l},[c]) &\text{def of $G^{l}$}\\
        &=\val{[G(c)]}\blacktriangleright\val{[c]}(G_{l}) &\text{Lemma \ref{lem: mcyoneda}}\\
        &=\val{[G(c)]}\blacktriangleright \val{[G(c)]} &\text{def of $[c]$}\\
        &\sqsupseteq e.
    \end{align*}
    The second equation is dual.
\end{proof}

We next define $\sigma$- and $\pi$-extensions on the intermediate level:

% https://q.uiver.app/#q=WzAsNCxbMiwwLCJcXG1hdGhjYWwgREQiXSxbNCwwLCJcXG1hdGhjYWwgVUMiXSxbMCwwLCJcXG1hdGhjYWwgREMiXSxbNiwwLCJcXG1hdGhjYWwgVUQiXSxbMCwyLCJHX3IiLDAseyJjdXJ2ZSI6LTJ9XSxbMywxLCJHX2wiLDAseyJjdXJ2ZSI6LTJ9XSxbMSwzLCJHX1xcc2lnbWEiLDAseyJjdXJ2ZSI6LTJ9XSxbMSwzLCJcXHRvcCIsMSx7InN0eWxlIjp7ImJvZHkiOnsibmFtZSI6Im5vbmUifSwiaGVhZCI6eyJuYW1lIjoibm9uZSJ9fX1dLFsyLDAsIkdfXFxwaSIsMCx7Im9mZnNldCI6LTEsImN1cnZlIjotMX1dLFsyLDAsIlxcYm90IiwxLHsic3R5bGUiOnsiYm9keSI6eyJuYW1lIjoiZW5kIn0sImhlYWQiOnsibmFtZSI6Im5vbmUifX19fV1d
\[\begin{tikzcd}
	{\mathcal DC} && {\mathcal DD} && {\mathcal UC} && {\mathcal UD}
	\arrow["{G_r}", curve={height=-12pt}, from=1-3, to=1-1]
	\arrow["{G_l}", curve={height=-12pt}, from=1-7, to=1-5]
	\arrow["{G_\sigma}", curve={height=-12pt}, from=1-5, to=1-7]
	\arrow["\top"{description}, draw=none, from=1-5, to=1-7]
	\arrow["{G_\pi}", shift left, curve={height=-6pt}, from=1-1, to=1-3]
	\arrow["\bot"{description}, draw=none, from=1-1, to=1-3]
\end{tikzcd}\]

\begin{definition}
Given $G:C\to D$, $i\in\mathcal DC$ and $f\in\mathcal UC$, we define 
$$G_{\pi}(i) = {\rm colim}_i\, D(-,G)
\quad\quad\quad 
G_{\sigma}(f)= {\rm lim}_f\, D(G,-).$$
\end{definition}

\begin{remark}
The adjointness relations indicated in the diagrams are well-known. In our notation, the proof reads as follows.
\begin{align*}
\mathcal DD(G_\pi(i),j)
&= \mathcal DD({\rm colim}_i\, D(-,G),j)
& \text{def of } G_\pi\\
&= i\blacktriangleright \mathcal DD(D(-,G),j)
& \text{def of } {\rm colim}\\
&= i\blacktriangleright (j\circ G)
& \text{Yoneda}
\\
&= \mathcal DC(i, j\circ G)
& \text{def of } \mathcal D
\end{align*}
\end{remark}

%\begin{proposition}\label{prop:theyareatleastpresheaves}
%     $G_{\pi}(i)\in \mathcal{D}D$ and $G_{\sigma}(f)\in\mathcal{U}D$.
%\end{proposition}
%\begin{proof}
%    \begin{align*}
%    &D(d_1,d_2)\cdot D(d_2,G(c_1))\cdot i(c_1)\sqsubseteq D(d_1,G(-))\bullet i\\
%    \iff & D(d_1,d_2)\cdot D(d_2,G(-))\bullet i\sqsubseteq D(d_1,G(-))\bullet i\\
%     \iff & D(d_1,d_2)\cdot G_{\pi}(i)(d_2)\sqsubseteq G_{\pi}(i)(d_1)\\
%    \iff & D(d_1,d_2)\sqsubseteq G_{\pi}(i)(d_1)\lhd G_{\pi}(i)(d_2).
%\end{align*}
%\end{proof}

The next proposition will be needed to prove the extension theorem.

\begin{proposition}\label{prop:adj}
    Let $G:C\to D$, then $$G_{l}(f)\bullet i=f\bullet G_{\pi}(i)\quad\quad f\bullet G_{r}(i)=G_{\sigma}(f)\bullet i.$$
\end{proposition}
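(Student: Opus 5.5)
The plan is a direct pointwise computation. The key step is to identify the weighted colimit $G_\pi(i)={\rm colim}_i\,D(-,G)$ in $\mathcal DD$ with a composite of relations. After that, both identities follow from associativity of $\bullet$ (Lemma~\ref{lemma: bullet is associative}) together with the identity law \eqref{eq:identity}, which is the Yoneda lemma in the form $D\bullet i'=i'$ and $f\bullet D=f$.

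First I will show that, for the relation $H=D(-,G-):D\looparrowright C$, $(d,c)\mapsto D(d,Gc)$, and $i\in\mathcal DC$, we have
$$G_\pi(i)={\rm colim}_i\,D(-,G)=H\bullet i,\qquad\text{i.e.}\qquad G_\pi(i)(d)=\bigsqcup_{c}D(d,Gc)\cdot i(c).$$
To see this, test against an arbitrary $j\in\mathcal DD$:
$$\mathcal DD(H\bullet i,j)=(H\bullet i)\blacktriangleright j=i\blacktriangleright(H\blacktriangleright j)=i\blacktriangleright\mathcal DD(D(-,G),j).$$
The middle step is Lemma~\ref{lem:double-blacktriangle}. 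The last step holds because $(H\blacktriangleright j)(c)=\bigsqcap_d D(d,Gc)\rhd j(d)=\mathcal DD(D(-,Gc),j)$ by Remark~\ref{rmk:DU}. This is exactly the defining equation of ${\rm colim}_i\,D(-,G)$ from Section~\ref{sec:weighted-colimits}, so uniqueness of colimits gives the identification. It is the only step that is more than unfolding definitions, and I expect it to be the main obstacle: one has to be careful that $\blacktriangleright$ is taken in the correct variable and that the direction of the composite $H\bullet i$ matches the typing of the relations.

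With this in hand, the first identity is a computation using associativity and Yoneda:
$$f\bullet G_\pi(i)=f\bullet(H\bullet i)=(f\bullet H)\bullet i.$$
Moreover,
$$(f\bullet H)(c)=\bigsqcup_d f(d)\cdot D(d,Gc)=(f\bullet D)(Gc)=f(Gc)=G_l(f)(c).$$
Hence $f\bullet G_\pi(i)=G_l(f)\bullet i$. Written out with explicit joins, the step rearranging the order of the joins uses that multiplication distributes over joins in $\Omega$.

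The second identity is dual. By the dual argument, which uses the left-hand side of Lemma~\ref{lem:double-blacktriangle} and the definition of weighted limits in $\mathcal UD$, we get $G_\sigma(f)={\rm lim}_f\,D(G,-)=f\bullet K$, where $K:C\looparrowright D$ is $K(c,d)=D(Gc,d)$. Then
$$G_\sigma(f)\bullet i=(f\bullet K)\bullet i=f\bullet(K\bullet i),$$
and
$$(K\bullet i)(c)=\bigsqcup_d D(Gc,d)\cdot i(d)=(D\bullet i)(Gc)=i(Gc)=G_r(i)(c).$$
This yields $f\bullet G_r(i)=G_\sigma(f)\bullet i$.
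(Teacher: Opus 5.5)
Your proof is correct and follows the paper's route. Both arguments write $G_l(f)(c)=f\bullet D(-,Gc)$ by Yoneda, reassociate using Lemma~\ref{lemma: bullet is associative}, and recognise $D(-,G-)\bullet i$ as $G_\pi(i)$. The differences are that you explicitly justify this last identification via Lemma~\ref{lem:double-blacktriangle} and uniqueness of colimits, and you also work out the dual identity; the paper leaves both implicit.
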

\begin{proof}
    We have: 
    \begin{align*}
        & f(G(c))= f\bullet D(-,G(c)) &\\
        \Longrightarrow & G_{l}(f)\bullet i = (f\bullet D (-, G (-))\bullet i &\\
        \iff & G_{l}(f)\bullet i = f\bullet(D(-,G(-)))\bullet i) &\\
        \iff & G_{l}(f)\bullet i = f\bullet G_{\pi}(i). &
    \end{align*}
\end{proof}

Finally, we can define the $\sigma$- and $\pi$-extensions of $G:C\to D$ on the canonical extensions.

\begin{definition}
We define functors $G^{\pi},G^{\sigma}:C^{\delta}\to D^{\delta}$ 
$$G^{\pi}(\kappa)={\rm lim}_{\descr{\kappa}}\, \overline G_{\pi} 
\quad\quad\quad
G^{\sigma}(\kappa)={\rm colim}_{\val{\kappa}}\overline G_\sigma
$$
\end{definition}

\begin{theorem}
\begin{enumerate}
    \item If $G_{l}(f)\in F_C$ and $G_{\pi}(i)\in I_D$ for every $i\in I_C$ and $f\in F_D$, then $G^{l}\dashv G^\pi$.
    \item If $G_{\sigma}(f)\in F_D$ and $G_{r}(i)\in I_C$ for every $i\in I_D$ and $f\in F_C$ then $G^{\sigma}\dashv G^{r}$.
\end{enumerate}
\end{theorem}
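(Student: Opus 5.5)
We prove item 1; item 2 is the order-dual argument with the roles of $F,I$, of limits and colimits, and of $\blacktriangleright,\blacktriangleleft$ exchanged. We must show $C^\delta(G^l\kappa,\lambda)=D^\delta(\kappa,G^\pi\lambda)$ for all $\kappa\in D^\delta$ and $\lambda\in C^\delta$. The plan is to unfold both sides with the defining equations of weighted (co)limits and reduce each to one and the same expression $\val{\kappa}\blacktriangleright J\blacktriangleleft\descr{\lambda}$. Here $J:F_D\looparrowright I_C$ is the relation
$$J(f,i)=G_l(f)\bullet i=f\bullet G_\pi(i),$$
and the two descriptions of $J$ agree by Proposition~\ref{prop:adj}. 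The hypotheses $G_l(f)\in F_C$ and $G_\pi(i)\in I_D$ are exactly what make $\overline{G_l(f)}\in C^\delta$ and $\overline{G_\pi(i)}\in D^\delta$ meaningful.

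\textbf{Left side.} By the definition $G^l(\kappa)={\rm colim}_{\val{\kappa}}\overline G_l$ and the definition of colimits,
$$C^\delta(G^l\kappa,\lambda)=\val{\kappa}\blacktriangleright C^\delta(\overline{G_l(-)},\lambda).$$
By the MacNeille Yoneda lemma we have $C^\delta(\overline{G_l f},\lambda)=\val{\lambda}(G_lf)$. Since $\lambda$ is a concept, $\val{\lambda}=\cont_C\blacktriangleleft\descr{\lambda}$, hence
$$\val{\lambda}(G_lf)=\bigsqcap_{i\in I_C}\cont_C(G_lf,i)\lhd\descr{\lambda}(i)=\bigsqcap_i J(f,i)\lhd\descr{\lambda}(i).$$
So $C^\delta(\overline{G_l(-)},\lambda)=J\blacktriangleleft\descr{\lambda}$, and the left side equals $\val{\kappa}\blacktriangleright(J\blacktriangleleft\descr{\lambda})$.

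\textbf{Right side.} Symmetrically, $G^\pi(\lambda)={\rm lim}_{\descr{\lambda}}\overline G_\pi$ gives
$$D^\delta(\kappa,G^\pi\lambda)=D^\delta(\kappa,\overline{G_\pi(-)})\blacktriangleleft\descr{\lambda}.$$
The dual MacNeille Yoneda lemma gives $D^\delta(\kappa,\overline{G_\pi i})=\descr{\kappa}(G_\pi i)$. Writing $\descr{\kappa}=\val{\kappa}\blacktriangleright\cont_D$, this equals $\bigsqcap_{f\in F_D}\val{\kappa}(f)\rhd\cont_D(f,G_\pi i)=(\val{\kappa}\blacktriangleright J)(i)$. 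So the right side is $(\val{\kappa}\blacktriangleright J)\blacktriangleleft\descr{\lambda}$.

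\textbf{Conclusion.} The two sides are equal by the associativity law $(\phi\blacktriangleright J)\blacktriangleleft\psi=\phi\blacktriangleright(J\blacktriangleleft\psi)$ from Remark~\ref{rmk:blacktriangle} (Lemma~\ref{lem:blackrightleft}), applied to $J$ in place of $\cont$.

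\textbf{Main obstacle.} The delicate point is bookkeeping: checking that the weights have the right variance. Concretely, $\val{\kappa}\in\mathcal DF_D$ should weight $\overline G_l:F_D\to C^\delta$, and $\descr{\lambda}\in\mathcal UI_C$ should weight $\overline G_\pi:I_C\to D^\delta$. It also requires that $J$ is a genuine $\Omega$-relation for the spaces $F_D$ and $I_C$, with the structure inherited via Lemma~\ref{lem:mc-distance}. To settle this I would argue as in the Proposition preceding the Theorem, replacing $F_D$ and $I_C$ by their discrete versions $|F_D|$ and $|I_C|$, where every function is a relation. The blacktriangle identities then hold verbatim, because they are just the quantale laws applied pointwise.
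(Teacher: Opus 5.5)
Your argument is correct, but it is organized differently from the paper's.

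The paper works only with generators. It appeals to Density (Theorem~\ref{th:d1dense}) to reduce the claim to $C^\delta(G^l(\overline f),\overline i)=D^\delta(\overline f,G^\pi(\overline i))$. It then uses Compactness (Theorem~\ref{th:comp}) to identify the two sides with $\cont_C(G_l(f),i)$ and $\cont_D(f,G_\pi(i))$, and concludes by Proposition~\ref{prop:adj}.

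You instead take arbitrary $\kappa\in D^\delta$ and $\lambda\in C^\delta$ and unfold $G^l$ and $G^\pi$ as weighted (co)limits of $\overline G_l$ and $\overline G_\pi$. The MacNeille Yoneda lemma and the fixed-point equations of concepts then bring both homs to the form $\val{\kappa}\blacktriangleright J\blacktriangleleft\descr{\lambda}$, bracketed in the two possible ways. You finish with the law $(\phi\blacktriangleright J)\blacktriangleleft\psi=\phi\blacktriangleright(J\blacktriangleleft\psi)$, which does hold pointwise for any function $J$. The essential input, Proposition~\ref{prop:adj}, is the same in both proofs.

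What your route buys is that it actually carries out the step the paper compresses into ``it is enough''. Agreement of homs on generators does not by itself yield an adjunction. One also needs $G^l$ to act as the colimit-extension and $G^\pi$ as the limit-extension along the generators. The paper's reduction also tacitly uses $G^l(\overline f)=\overline{G_l(f)}$ and $G^\pi(\overline i)=\overline{G_\pi(i)}$. These do hold under the hypotheses, but the paper does not verify them. You never need them, because you work directly with $\overline G_l$ and $\overline G_\pi$.

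The paper's route is shorter and displays the classical compactness-plus-density pattern. Your passage to the discrete spaces $|F_D|$ and $|I_C|$ to settle the variance bookkeeping is legitimate. This is because $\mathcal M(|\cont|)=\mathcal M(\cont)$, and the defining equations of weighted (co)limits involve only the underlying functions.
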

\begin{proof}
    We only show item 1, the second item being dual. By Theorem \ref{th:d1dense}, it is enough to show that $C^{\delta}(G^{l}(\overline{f}),\overline{i})= D^{\delta}(\overline{f},G^{\pi}(\overline{i}))$. Since $G_{\pi}(i)\in I_D$ and $G_{l}(f)\in F_C$, by Theorem \ref{th:comp}, it suffices to show that $\cont(G_{l}(f),i)=\cont(f,G_{\pi}(i))$. But this is the content of Proposition \ref{prop:adj}.
\end{proof}

\begin{remark}
    If $F_D=\mathcal{U}D$ or $I_D=\mathcal{D}D$, then 
    %by Proposition~\ref{prop:theyareatleastpresheaves} 
    $G_{\pi}(i)\in I_D$ and $C_{\delta}(f)\in F_D$. If $I$, is defined by ideals closed under $\Phi$-colimits, then $G_{\pi}(i)\in I_D$ is equivalent to the fact that for every $\phi\in \Phi$,  $(\phi\blacktriangleright D(X,G))\bullet i=\phi\blacktriangleright (D(X,G)\bullet i)$, where $\phi:X\to D$, which does not always hold. However, notice that $G^{\pi}, G^{\sigma}:C^\delta\to D^\delta$ are definable regardless of whether $G_{\pi}(i)\in I_D$ and $G_{\sigma}(f)\in F_D$ or not.
\end{remark}

\section{Conclusion}

We generalized the canonical extension construction from lattices and posets to quantale-enriched categories and developed the basic theory of these canonical extensions. On the category theoretic side this adds to a line of developement that started with Lawvere \cite{Lawvere:metric} and continued through, for example, \cite{Stubbe05a,Shen-Zhang,Willerton:tight-spans,Fujii}. On the logic side, our work is a  stepping stone as part of the larger project of developing the logical foundations of categorization theory \cite{CFPPTW16,TarkPaper2017}. In particular, our result is the theoretical groundwork needed for the development of the duality theory of quantale-enriched categories with many-valued polarities and the introduction of logical systems with modal operations and consequence relations weighted over a quantale, and the development of their proof theory and relational semantics. The work in the present article will allow the systematic development of classes of such logical systems, and allow to develop tools from the theory of algorithmic canonicity in the context of quantale enriched categories.  

%Future directions. The next steps in the development of the theory of canonical extensions in the categorical setting would be the study of extensions of functors of higher arity. This would also provide the theoretical foundation for the development of substructural logics with fuzzy consequence relations.

%\bibliographystyle{unsrtnat} 
\bibliographystyle{plainnat} 
\bibliography{bib}

\end{document}